\documentclass[final]{article}   
\usepackage[a4paper, margin = 1.15in]{geometry}

\usepackage{amsmath, amssymb, amsthm}
\usepackage{mathtools, bbm, dsfont}
\usepackage{enumitem}
\usepackage{microtype}
\usepackage[indent]{parskip}
\usepackage{graphicx}
\usepackage[english,algoruled,lined,noresetcount,norelsize,linesnumbered]{algorithm2e}
\usepackage{setspace}

\usepackage[colorlinks, bookmarks, linkcolor=black, citecolor=black, urlcolor=black]{hyperref}
\usepackage[hyperpageref]{backref}
\renewcommand*\backref[1]{\ifx#1\relax \else 
(Return to page #1) \fi}
\usepackage[numbers,sort&compress]{natbib}
\usepackage[notcite,notref]{showkeys}
\def\mcite[#1]#2{\mbox{\cite[#1]{#2}}} 

\newtheorem{theorem}{Theorem}[section]
\newtheorem{lemma}[theorem]{Lemma}

\theoremstyle{definition}
\newtheorem{claim}[theorem]{Claim}
\newtheorem{definition}[theorem]{Definition}

\newtheorem{remark}[theorem]{Remark}

\setlist{leftmargin=*,labelindent=1em,itemsep=2pt,parsep=2pt}
\setenumerate{,itemsep=1pt,parsep=1pt}

\newcommand{\itmarab}[1]{\mbox{\rm 
({\it #1}\,\arabic{*}\hspace{0.05em})}}
\def\itm#1{\rm ({#1})} 
\def\itmit#1{\itm{\it #1\,}} 
\def\rom{\itmit{\roman{*}}}

\newcommand{\eps}{\varepsilon}

\newcommand{\im}{\mathrm{Im}}
\newcommand{\dom}{\mathrm{Dom}}
\newcommand{\dist}{\mathrm{dist}}
\newcommand{\exth}{\delta_{\rm e}(\Delta)}

\newcommand{\RG}{\normalfont{\textsc{rg}}}
\newcommand{\BL}{\normalfont{\textsc{bl}}}
\newcommand{\CNL}{\normalfont{\textsc{I}}}
\newcommand{\LG}{\normalfont{\textsc{g}}}
\renewcommand{\subset}{\subseteq}
\renewcommand{\epsilon}{\varepsilon}

\let\symmdiff\bigtriangleup
\let\hat\widehat
\let\tilde\widetilde

\def\cX{\mathcal{X}}

\newcommand{\oldqed}{}
\def\endofClaim{\hfill\scalebox{.6}{$\Box$}}
\newenvironment{claimproof}[1][Proof]{
  \renewcommand{\oldqed}{\qedsymbol}
  \renewcommand{\qedsymbol}{\endofClaim}
  \begin{proof}[#1]}
  {\end{proof}
  \renewcommand{\qedsymbol}{\oldqed}} 

\usepackage[mathlines]{lineno}
\usepackage{etoolbox} 

\newcommand*\linenomathpatch[1]{%
   \expandafter\pretocmd\csname #1\endcsname {\linenomath}{}{}%
   \expandafter\pretocmd\csname #1*\endcsname{\linenomath}{}{}%
   \expandafter\apptocmd\csname end#1\endcsname {\endlinenomath}{}{}%
   \expandafter\apptocmd\csname end#1*\endcsname{\endlinenomath}{}{}%
 }
\newcommand*\linenomathpatchAMS[1]{%
    \expandafter\pretocmd\csname #1\endcsname {\linenomathAMS}{}{}%
    \expandafter\pretocmd\csname #1*\endcsname{\linenomathAMS}{}{}%
    \expandafter\apptocmd\csname end#1\endcsname {\endlinenomath}{}{}%
    \expandafter\apptocmd\csname end#1*\endcsname{\endlinenomath}{}{}%
}

\expandafter\ifx\linenomath\linenomathWithnumbers
\let\linenomathAMS\linenomathWithnumbers
\patchcmd\linenomathAMS{\advance\postdisplaypenalty\linenopenalty}{}{}{}
\else
\let\linenomathAMS\linenomathNonumbers
\fi

\linenomathpatch{equation} 
\linenomathpatchAMS{gather}
\linenomathpatchAMS{multline}
\linenomathpatchAMS{align}
\linenomathpatchAMS{alignat}
\linenomathpatchAMS{flalign}

\title{Local Resilience for Containment of \\Bounded Degree Spanning
  Subgraphs\footnote{An extended abstract presenting the main result
    of this paper will appear in the post-proceedings of LATIN 2026,
    published as an LNCS volume (Springer).}}

\author{%
Peter Allen\thanks{Department of Mathematics, The London School of
Economics, Houghton Street, London WC2A 2AE, UK. E-mail: {\tt  \href{mailto:p.d.allen@lse.ac.uk}{p.d.allen@lse.ac.uk}, \href{mailto:j.boettcher@lse.ac.uk.}{j.boettcher@lse.ac.uk}, \href{mailto:m.s.neve@lse.ac.uk}{m.s.neve@lse.ac.uk}.}}
\and
Julia B\"{o}ttcher\footnotemark[2]
\and
Yoshiharu Kohayakawa\thanks{Instituto de Matem\'atica e
  Estat\'{\i}stica, Universidade de S\~ao Paulo, Rua do Mat\~ao 1010,
  05508–090 S\~ao Paulo, Brazil.  E-mail: {\tt yoshi@ime.usp.br}.
  Partially supported by FAPESP (2023/03167-5), CNPq (407970/2023-1,
  420838/2025-2, 315258/2023-3) and CAPES (Finance Code 001)}
\and
Mihir Neve\footnotemark[2]}

\date{}

\begin{document}
\setstretch{1.15}

\maketitle

\begin{abstract}
\vspace{5pt}
  We prove that for all~$\Delta \geq 2$ and~$\gamma > 0$, there exists a constant~$C = C(\Delta, \gamma)$ such that for $p\geq C(\log n/n)^{1/\Delta}$, asymptotically
  almost surely, every spanning subgraph~$G$
  of~$G(n,p)$ with minimum degree at least
  $\big(1-1/(2\Delta)+\gamma\big)pn$ contains every $n$-vertex graph~$H$ 
  with maximum degree at most~$\Delta$ and with at least~$Cp^{-2}$ vertices not in any triangles of $H$.  This is a `sparse local
  resilience version' of a classical theorem of Sauer and Spencer.
  
  The condition that~$H$ should contain some vertices not in triangles
  is necessary, and in fact the quantity~$p^{-2}$ is asymptotically best possible.  A
  key feature of our result is that~$H$ is allowed to be an expander
  graph, distinguishing it from previous results of similar nature,
  which dealt with, e.g., graphs of sublinear bandwidth.  Our proof
  makes use of regularity arguments, with the sparse blow-up lemma for
  random graphs being a key tool.
\end{abstract}

\section{Introduction}

The transference of results from extremal graph theory to sparse
settings has become an important area of research in the past two
decades (a comprehensive overview of these developments can for example be found in the survey~\cite{conlon14:_ICM}).  Taking a very broad view, this is a
particular chapter in an area of research that has contributed to the
proof of celebrated theorems, such as the Green--Tao theorem on the
existence of arbitrarily long arithmetic progressions in the
primes~\cite{green04:_prime_APs}.  

While the Green--Tao transference principle 
is concerned with transference to sparse pseudorandom settings, the
study of transference results to sparse, genuinely \textit{random}
settings is natural, and goes back to the 80s and 90s in the case of
graphs~\cite{frankl86} and arithmetic
progressions~\cite{kohayakawa96:_arith}.  In this paper, we transfer a
classic result of Sauer and Spencer~\cite{Sauer_spencer} to the sparse setting of
random graphs.

Within extremal graph
theory, transference results have been typically considered for Tur\'an- or Dirac-type theorems.  One such celebrated result, for example, is
the transference of Tur\'an's theorem to the binomial random graph
$G(n,p)$ by Schacht~\cite{Schacht}, and independently, by Conlon and
Gowers~\cite{ConGow}. Here, $G(n,p)$ is the random graph on~$n$
vertices obtained by including each of the possible $\binom{n}{2}$
edges independently with probability~$p$.  We say that an event holds
for $G(n,p)$ \emph{asymptotically almost surely} (a.a.s.)\ if the
probability of this event tends to~$1$ as~$n$ tends to infinity. The
authors of both the aforementioned papers determined the threshold for~$p$ above which the random graph $\Gamma\sim G(n,p)$ a.a.s. satisfies the following property for any
fixed constant $\epsilon>0$. If~$G$ is a subgraph
of~$\Gamma$ with at least $\bigl(1-1/(r-1)+\varepsilon\bigr)\bigl|E(\Gamma)\bigr|$
edges, then $G$~contains a copy of the complete graph~$K_r$. This result generalises Tur\'an's
theorem (the $p=1$ case) by transferring it to the setting of sparse random graphs.

In this paper, we focus on a Dirac-type transference result. Such results are often
phrased using the notion of `local resilience', which was introduced
by Sudakov and Vu~\cite{SudVu}.  Let~$\Gamma$ be an $n$-vertex graph that
 satisfies some property~$\Pi$. The \emph{relative local
  resilience}, or \textit{local resilience} for short, of~$\Gamma$
for~$\Pi$ is defined to be the largest $\alpha \in (0,1)$ such that every graph~$G$ obtained from~$\Gamma$ by deleting at most an $\alpha$
proportion of the edges incident to each vertex of~$\Gamma$ still
satisfies~$\Pi$.  For instance, Dirac's theorem implies that the
complete graph~$K_n$ ($n\geq3$) has local resilience~$\sim1/2$ for the
property of being Hamiltonian.  

In the sparse setting, local
resilience results for~$G(n,p)$ have been established for the property
of containment of perfect
matchings~\cite{ConEspKueOstKim,NenSteTru,SudVu}, Hamilton
cycles~\cite{ConEspKueOstKim,LeeSud,Montgomery,NenSteTru}, triangle
factors~\cite{BalLeeSam}, cycle factors~\cite{Trujic}, almost spanning
trees~\cite{BalCsaSam}, powers of Hamilton
cycles~\cite{FischSkoSteTru,SkoSteTru}, and bounded degree graphs of
sublinear bandwidth~\cite{AllBoeEhrSchTar,sparse_bandwidth}. A
common link in these local resilience results is that all  these graph
classes (perfect matchings, Hamilton cycles, etc.) only contain graphs
with sublinear bandwidth, thereby excluding, for instance, any graph
having good expansion properties. 

Here, we consider the class of graphs with maximum degree bounded
above by some fixed constant~$\Delta$.  In a classic result, Sauer and
Spencer~\cite{Sauer_spencer} determined a minimum degree condition on
the host graph~$G$ that enforces in~$G$ the containment of any
graph~$H$ having maximum degree at most~$\Delta$.  More precisely,
they prove the following result.

\begin{theorem}[Sauer--Spencer Theorem]
  \label{thm:Sauer_Spencer}
  Let $\Delta \in \mathbb{N}$ be given. Suppose~$G$ is an $n$-vertex graph with minimum degree
  $\delta(G) \ge(1-1/2\Delta)n$ and~$H$ is an $n$-vertex graph
  with maximum degree~$\Delta(H) \leq \Delta$. Then~$H$ is a spanning subgraph
  of~$G$.
\end{theorem}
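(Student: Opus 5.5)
The plan is the classical Sauer--Spencer swapping argument. Take a bijection $\phi\colon V(H)\to V(G)$ minimising the number of \emph{conflicting} edges, where $xy\in E(H)$ conflicts if $\phi(x)\phi(y)\notin E(G)$. If there are none, $\phi$ embeds $H$ into $G$ and we are done. Otherwise fix a conflicting edge $xy$. We look for a vertex $z\in V(H)\setminus\{x\}$ such that swapping the images of $x$ and $z$ strictly decreases the number of conflicts, which contradicts minimality.

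The first step is a sufficient condition for a swap to be good. Write $u=\phi(x)$ and $w=\phi(z)$. The swap is certainly good if two things hold. First, every $H$-neighbour $x'$ of $x$ other than $z$ satisfies $\phi(x')w\in E(G)$. Second, every $H$-neighbour $z'$ of $z$ other than $x$ satisfies $\phi(z')u\in E(G)$. In that case, after the swap no edge at $x$ or $z$ conflicts, apart possibly from the edge $xz$ itself, which keeps its pair of images and so its conflict status. Edges not meeting $\{x,z\}$ are unchanged. Before the swap, however, $xy$ conflicted, and $xy\ne xz$ unless $z=y$. So we need a good swap partner $z\ne y$, or we need $z=y$ with some other conflict created. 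To stay clean, choose $z$ with $z\ne y$ and $z\notin N_H(x)$; then the edge $xz$ does not exist and the count drops by at least one.

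The second step is counting. Call $w\in V(G)$ \emph{bad of type 1} if $w$ misses some $\phi(x')$ in $G$ with $x'\in N_H(x)$. Each of the at most $\Delta$ such images has at most $n/(2\Delta)$ non-neighbours (counting itself, since the $G$-degree is at least $n-n/(2\Delta)$). So there are at most $\Delta\cdot n/(2\Delta)=n/2$ bad vertices of type 1.

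Type 2 is the main obstacle. We need $\phi^{-1}(w)$ to have all its $H$-neighbours mapped into $N_G(u)$. Let $S=V(G)\setminus N_G(u)$, so $|S|\le n/(2\Delta)$, counting $u$ itself. A vertex $w$ is bad of type 2 if $\phi^{-1}(w)$ has an $H$-neighbour in $\phi^{-1}(S)$. Each vertex of $\phi^{-1}(S)$ has at most $\Delta$ neighbours in $H$, so there are at most $\Delta|S|\le n/2$ bad vertices of type 2.

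Together these give up to $n$ bad vertices, which is exactly tight, so the crude bound leaves no room. We must recover slack and also exclude $z\in N_H(x)\cup\{x\}$. For the slack: $y$ is a neighbour of $x$ whose image $\phi(y)$ lies in $S$, because $xy$ conflicts. Hence $u=\phi(x)$ is counted among the type-2 bad vertices through $y$, and $\phi(y)$ has $u$ as a non-neighbour, so $u$ is also counted among the type-1 non-neighbours of $\phi(y)$. These overlaps, together with the fact that every $\phi(x')$ with $x'\in N_H(x)$ is itself in $N_G(u)$ or is already counted, should give a strict inequality. Then some $w$ is good for both types with $\phi^{-1}(w)\notin N_H[x]$.

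To handle the excluded set $N_H[x]$, note the following. Every $x'\in N_H(x)$ makes $\phi(x')$ bad of type 2, because $x'$ has the $H$-neighbour $x$ with $\phi(x)=u\in S$. So these candidates are already counted among the bad vertices and cost nothing extra. I would verify this double counting carefully; it is the delicate point. If it fails by one, the fallback is to first pick a conflicting pair maximising some secondary potential, or to use the standard refinement of counting ordered pairs, $\sum_w(\text{type-1 badness}+\text{type-2 badness})<n$. That sum is at most $2\cdot\Delta\cdot(n/(2\Delta)-1)+O(1)$ once we use that each vertex is not its own non-neighbour.
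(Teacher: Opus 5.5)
The paper does not prove this theorem; it quotes it from Sauer and Spencer. Your proposal is their classical swapping argument, and it is correct.

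The step you flag as delicate closes without any fallback. Write $B_1,B_2$ for the two bad sets. You have $|B_1|,|B_2|\le n/2$, and $u\in B_1\cap B_2$ because $u\phi(y)\notin E(G)$ and $\phi(y)\in S$. Hence $|B_1\cup B_2|\le n-1$. Moreover $\phi(N_H(x))\subseteq B_2$ (since $u\in S$) and $u\in B_1$. So any $w\notin B_1\cup B_2$ has $\phi^{-1}(w)\notin N_H[x]$, and swapping $x$ with $\phi^{-1}(w)$ strictly reduces the number of conflicting edges.
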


Note that for any $\Delta\ge 3$, expander graphs on $n$ vertices and
with maximum degree $\Delta$ are known to exist for every $n$.  Hence, it is not
surprising that this class of graphs with bounded maximum degree is harder to handle than the graph classes mentioned above. In fact,
it is not yet known whether the minimum degree condition in Theorem~\ref{thm:Sauer_Spencer} is the best possible in enforcing the containment of
all graphs with maximum degree at most~$\Delta$. That is, even the $p=1$
case of the local resilience problem for the containment of bounded
degree graphs stands unsolved.  

A well-known conjecture of Bollob\'as, Eldridge, and
Catlin~\cite{Bollobas_Eldridge_version_conj,%
  catlin_version_conj} postulates that the minimum degree condition 
$\delta(G) \geq (1-1/(\Delta+1))n$ on $G$ is sufficient to enforce the containment of any graph~$H$ having $\Delta(H) \leq \Delta$. This conjecture is known to be tight, as is shown by a slightly unbalanced complete $(\Delta+1)$-partite
graph, which does not contain a $K_{\Delta+1}$-factor. While there have been several partial results towards the resolution of this conjecture (see, for example,~\cite{BEC_Delta2,BEC_Delta22,BEC_Delta3,BEC_bipartite,Kaul_kostochka_yu}),
the best known minimum degree condition over all values of~$\Delta$ is still given
by Theorem~\ref{thm:Sauer_Spencer}.

\subsection{Our Results}

We prove a local resilience result transferring
Theorem~\ref{thm:Sauer_Spencer} to the sparse setting of random graphs by showing
that the local resilience for containing a spanning
subgraph $H$ with $\Delta(H) \leq \Delta$ and enough vertices not in triangles in $G(n,p)$ is at
least~$1/2\Delta$. Note that in the random graph~$G(n,p)$, every vertex has~$(n-1)p$ neighbours in expectation, and in our setting $p$ is large enough that a.a.s.\ every vertex of $G(n,p)$ has degree very close to $pn$.

\begin{theorem}
    \label{thm:resil_main_basic}
    For all~$\Delta \geq 2$ and~$\gamma>0$, there is a
    constant~$C > 0$ such that for~$p \geq C(\log n/n)^{1/\Delta}$,
    the following holds asymptotically almost surely for~$\Gamma \sim G(n,p)$.
    Let~$G$ be a spanning subgraph of~$\Gamma$ with
    $\delta(G) \ge \big(1-1/2\Delta + \gamma\big)pn$, and let~$H$ be
    an $n$-vertex graph with $\Delta(H) \leq \Delta$ and with at
    least~$Cp^{-2}$ vertices not contained in any triangle of~$H$.  Then~$G$
    contains a copy of~$H$.
\end{theorem}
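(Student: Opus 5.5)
We follow the standard sparse-regularity and blow-up paradigm, modelled on how the Sauer--Spencer theorem is proved in the dense setting. First we apply the sparse minimum degree regularity lemma to $G\subseteq\Gamma$. This gives a partition $V_0\cup V_1\cup\dots\cup V_k$ of $V(G)$ and a reduced graph $R$ on $[k]$. Here $ij\in E(R)$ whenever $(V_i,V_j)$ is $(\eps,d,p)$-regular in $G$, and $R$ inherits $\delta(R)\ge(1-1/2\Delta+\gamma/2)k$. Since $\Gamma\sim G(n,p)$ with $p\ge C(\log n/n)^{1/\Delta}$, we may also assume the pairs are regular inheriting with respect to common neighbourhoods in $\Gamma$. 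This is the condition the sparse blow-up lemma for $G(n,p)$ needs at this density.

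\textbf{Step 1 (partition $H$).} We partition $H$ in the Sauer--Spencer style. The dense theorem, or rather its proof, shows that a graph $R$ with $\delta(R)\ge(1-1/2\Delta+\gamma/2)k$ has a structure into which any $\Delta$-bounded graph can be homomorphically and almost equitably mapped. We proceed as follows.
\begin{itemize}
\item Find in $R$ a spanning family of small cliques or stars forming a ``fractional factor''. This uses that $\delta(R)$ exceeds $(1-1/2\Delta)k$.
\item Take an equitable colouring of $H^2$, or of a suitable auxiliary graph, with few colours. Each colour class is then an independent set in $H$ with no two vertices sharing a neighbour.
\item Map colour classes to clusters randomly, balancing sizes.
\end{itemize}
This yields a homomorphism $f\colon V(H)\to V(R)$ with $|f^{-1}(i)|\approx|V_i|$ in which every edge of $H$ lands on an edge of $R$.

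The minimum degree is only $1-1/2\Delta$, so $R$ need not contain $K_{\Delta+1}$-factors. The assignment must therefore exploit the locality of the Sauer--Spencer swapping argument, done greedily. At each vertex the minimum degree guarantees that the at most $\Delta$ already-placed neighbours have a common $R$-neighbourhood of size at least $\gamma k$, which leaves room to balance.

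\textbf{Step 2 (fix sizes and the exceptional set).} The exceptional set $V_0$ and the imbalances $|f^{-1}(i)|-|V_i|$ must be fixed exactly. This is where the triangle-free vertices are used. In $G(n,p)$ one cannot embed a vertex of $H$ together with a triangle of neighbours into arbitrary positions: common neighbourhoods of $\Delta$ vertices have size about $p^\Delta n$, and an edge among neighbours needs a further factor of $p$. We therefore embed the vertices of $V_0$ onto images of vertices $x$ of $H$ with no triangle through $x$. We choose at least $Cp^{-2}$ such vertices, pairwise far apart in $H$, and embed them together with their neighbourhoods greedily first. The triangle-freeness ensures that the neighbours of $x$ only need to lie in $N_G(v)$ for the chosen image $v\in V_0$ and be spread among clusters, without edges among themselves. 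These neighbours are then treated as image restrictions in the blow-up lemma. Balancing cluster sizes is done similarly: shifting a vertex between clusters along an $R$-path uses a triangle-free vertex at each step. The count $p^{-2}$ comes from the number of such corrections needed: $|V_0|\le\eps n$ is fine, but the restrictions must obey the blow-up lemma's bound on image-restricted vertices, of order $\eps p^2 n$ per cluster for restriction sets of size about $p^{2}n$.

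\textbf{Step 3 (apply the blow-up lemma).} With all sizes matched and restrictions of size roughly $\zeta p^{|\text{restricted nbrs}|}|V_i|$, we apply the sparse blow-up lemma for $G(n,p)$ to complete the embedding. Its hypotheses at $p\ge C(\log n/n)^{1/\Delta}$ need degeneracy or maximum degree $\Delta$, which we have.

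\textbf{Main obstacle.} The main obstacle is Step 1 combined with Step 2: producing a balanced homomorphism of an arbitrary, possibly expanding, $H$ into $R$ using only $\delta(R)\ge(1-1/2\Delta+\gamma)k$. Without bandwidth there is no sequential structure to follow. We would first try a randomized greedy assignment in a random vertex order, bounding deviations via concentration. Any remaining imbalance, of size $o(n)$, would then be corrected using the triangle-free vertices as local adjusters.
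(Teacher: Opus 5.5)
Your architecture (sparse regularity with inheritance, covering the exceptional set by pre-embedding far-apart triangle-free vertices of $H$ together with their neighbourhoods, then the sparse blow-up lemma with image restrictions) is the paper's. However, the step you flag as the main obstacle is a genuine gap, and your proposed remedy fails.

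\textbf{The missing partition of $H$.} What is needed is a size-compatible $R$-partition of $H'$ with two further properties. First, it must contain an $(\alpha,R')$-buffer: linearly many vertices per cluster whose first and second neighbourhoods go along the super-regular subgraph $R'$. Second, it must place the image-restricted vertices and their neighbourhoods into the clusters dictated by the pre-embedding. A random greedy homomorphism gives no control of the cluster loads up to $o(n)$. Loads depend on how often each cluster lies in common $R$-neighbourhoods, and late vertices may find only full clusters available. The residual imbalance cannot be corrected with triangle-free vertices. Only $Cp^{-2}=o(n)$ of them are guaranteed, and balancing a homomorphism $H\to R$ is a purely dense problem in which triangle-freeness plays no role. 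Matching sizes exactly is already a Sauer--Spencer-type embedding of $H$ into a blow-up of $R$. Getting the buffer and the prescribed placement as well requires an \emph{extendable} version of that dense theorem. This is the missing idea. The paper proves the statement via Theorem~\ref{thm:resil_main}, using $\delta(R)\ge(\exth+\gamma/2)r$ and the companion bound $\exth\le 1-1/2\Delta$. It then invokes Lemma~\ref{lem:Lemma_H}, which extends a small prescribed $R$-partition $\mathcal N$ of the image-restricted neighbourhoods to a full size-compatible $R$-partition, with a buffer disjoint from $\mathcal N$.

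\textbf{Incorrect supporting claims.} Since $1-1/2\Delta\ge 1-1/(\Delta+1)$, the reduced graph \emph{does} contain a $K_{\Delta+1}$-factor by Hajnal--Szemer\'edi. The paper takes this factor as $R'$, on which super-regularity, one-sided inheritance and the buffer live; your Step~3 omits these blow-up lemma requirements entirely.

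Your account of $p^{-2}$ is also off. If $|V_0|$ were as large as $\eps n$, you could not cover it with $Cp^{-2}$ triangle-free vertices. You need Lemma~\ref{lem:resil_G}: all exceptional vertices except those whose $\Gamma$-neighbourhoods fail to inherit regularity are absorbed into clusters, leaving $|V_0|\le C_{\LG}p^{-2}$. The blow-up lemma tolerates $\rho|X_i|$ image-restricted vertices per cluster, so that is not where $p^{-2}$ comes from.

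Finally, the images of $N_H(x)$ cannot be chosen merely greedily. They must satisfy \ref{itm:W1}--\ref{itm:W4} via the common neighbourhood lemma, so that the restrictions on second neighbours meet \ref{itm:IR2}, \ref{itm:IR5} and \ref{itm:IR6}. The candidate sets must also not be exhausted. The paper ensures this by embedding into an evenly scattered set $S$ and always covering next the exceptional vertex with the fewest remaining candidates.
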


We remark that, technically, this theorem concerns a sequence
$H=(H_n)_{\,n\in\mathbb{N}}$ of graphs, but to simplify notation we just
write~$H$, as is standard in the area. Moreover, while we do not believe it to be optimal, the hypothesis on~$p$ that we
require above is the one under which the sparse blow-up lemma for random graphs~\mcite[Lemma 1.21]{sparse_blowup} operates, and is the natural benchmark up to which results embedding spanning graphs of maximum degree~$\Delta$ in~$G(n,p)$ are generally proved (see, for example, the universality results of~\cite{DKRR, Univ_Delta2}). 

Further, the requirement on the graph~$H$ to have at least~$Cp^{-2}$
vertices not contained in any triangles of~$H$ is indispensable, as
was shown by Huang, Lee, and Sudakov
in~\cite{HLS_vtsintriangles}. Indeed, they observe that already for sufficiently small 
constant values of~$p$, it is possible, after some cleaning-up of~$G(n,p)$, to delete all edges in the
neighbourhood of~$\Theta(p^{-2})$ vertices and obtain a
subgraph~$G$ satisfying the minimum degree condition mentioned in
Theorem~\ref{thm:resil_main_basic}.  Since such a subgraph~$G$ clearly
contains no~$H$ in which only~$o(p^{-2})$ vertices fail to belong to
triangles, the additional requirement on~$H$ in
Theorem~\ref{thm:resil_main_basic} is necessary.

As in the $p=1$ case, it is not known if
Theorem~\ref{thm:resil_main_basic} is optimal with respect to the
minimum degree condition $\delta(G) \geq (1 - 1/2\Delta +
\gamma)pn$. In fact, we are able to prove a more general version of
Theorem~\ref{thm:resil_main_basic}.  For this, we need to define the
following notion of \emph{extension threshold}.  In the definition
below, $H[S]$~denotes the subgraph of~$H$ induced by the
set~$S\subseteq V(H)$.

\begin{definition}[Extension threshold]
  \label{defn:extension_threshold}
  For all $\Delta \in \mathbb{N}$, the \emph{extension threshold}
  $\exth$ is the infimum over all real numbers~$\delta_{\rm e}$ for which the
  following holds.
  For every $\gamma>0$ there exists $\eta>0$ such that for all
  sufficiently large~$n$, if~$G$ and~$H$ are $n$-vertex
  graphs with $\delta(G)\ge(\delta_{\rm e}+\gamma)n$ and 
  $\Delta(H)\le\Delta$, and if $S\subset V(H)$ with $|S|\le\eta n$ is
  given with an embedding $\varphi_S: S\to V(G)$ of~$H[S]$ into~$G$,
  then there is an embedding $\varphi: V(H)\to V(G)$ of~$H$ into~$G$
  that extends~$\varphi_S$.
\end{definition}

It is easy to see that the infimum is attained in the above definition, and hence~$\exth$ also satisfies the extension property outlined in Definition~\ref{defn:extension_threshold}. The example witnessing the tightness of the Bollob\'as--Eldridge--Catlin
Conjecture clearly implies that $\exth\ge 1-1/(\Delta+1)$. Moreover, in a companion result on the robustness of Theorem~\ref{thm:Sauer_Spencer}~(see \cite{AllBoeKohNev:robust}), we prove that~$\exth\le 1-1/2\Delta$. In fact, we conjecture that the extension threshold~$\exth$ equals the lower bound $\Delta/(\Delta+1)$. Our main technical theorem is a (local) resilience result in
which the minimum degree condition on the graph~$G$ is given
by~$\exth$.  Indeed, we prove the following, which immediately
implies Theorem~\ref{thm:resil_main_basic}. 

\begin{theorem}
    \label{thm:resil_main}
    For all $\Delta \geq 2$ and $\gamma > 0$, there is a
    constant~$C > 0$ such that for $p \geq C(\log n/n)^{1/\Delta}$,
    the following holds asymptotically almost surely for $\Gamma \sim G(n,p)$.
    Let~$G$ be a spanning subgraph of~$\Gamma$ with
    $\delta(G) \geq (\exth + \gamma)pn$, and let~$H$ be an
    $n$-vertex graph with $\Delta(H) \leq \Delta$ and with at least
    $Cp^{-2}$ vertices not contained in any triangle of~$H$.  Then~$G$ contains
    a copy of~$H$.
\end{theorem}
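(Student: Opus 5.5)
We follow the standard sparse-regularity / blow-up strategy, with the extension threshold $\exth$ playing the role of the dense embedding result on the reduced graph. We first apply the sparse minimum-degree form of the regularity lemma to $G\subseteq\Gamma$. This gives a partition $V_0\cup V_1\cup\dots\cup V_k$ with a small exceptional set $V_0$, and a reduced graph $R$ on $[k]$ whose edges correspond to $(\eps,p)$-regular pairs of density at least $d$. Since $\Gamma$ is a.a.s.\ upper-uniform, $\delta(R)\ge(\exth+\gamma/2)k$. Next, using the extension property of $\exth$ in the dense setting (applied to $R$, or to a suitable blow-up of $R$ against a random partition of $H$), we find an assignment of the vertices of $H$ to clusters that sends edges of $H$ to edges of $R$ and gives cluster sizes matching the $|V_i|$ up to small error. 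We then repair the imbalance by moving vertices between clusters.

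The exceptional vertices, which have poor degrees into the partition structure, must be handled by embedding certain vertices of $H$ into them \emph{before} the blow-up step. Concretely, we pick a set $S\subseteq V(H)$ of size at most $\eta n$, consisting of vertices far apart in $H$ with small neighbourhoods, and embed them greedily onto $V_0$. For each $x\in S$ mapped to $v\in V_0$, the neighbours of $x$ must be placed into clusters where $v$ has typical degree. Their common neighbourhoods must also be well behaved, so that the sparse blow-up lemma \mcite[Lemma 1.21]{sparse_blowup} can accept these as image restrictions. This is exactly where the extension property in Definition~\ref{defn:extension_threshold} is used: we prescribe the embedding of $H[S']$ for a small set $S'$, say $S$ together with neighbourhoods, at the level of the reduced graph, and extend to an allocation of all of $H$. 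The hypothesis $p\ge C(\log n/n)^{1/\Delta}$ ensures that $\Gamma$ has the joint-neighbourhood properties the blow-up lemma needs, namely that common neighbourhoods of up to $\Delta$ vertices have size about $p^\Delta n$ and are regular.

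The main obstacle is the vertices of $G$ whose neighbourhood in $G$ contains few edges. The adversary can destroy all edges inside the neighbourhoods of up to $\Theta(p^{-2})$ vertices, as in the Huang--Lee--Sudakov construction. Such a vertex $v$ can only host a vertex of $H$ that lies in no triangle, and more generally its neighbourhood structure dictates which local configurations of $H$ fit at $v$. We would first identify the set $B$ of ``bad'' vertices whose neighbourhood is not well connected by regular pairs. Using properties of $G(n,p)$ (for example, that few vertices have many neighbours in any small set, and edge counts in neighbourhoods of size $pn$ concentrate around $p^3n^2$), we show that $|B|=O(p^{-2})$ plus a negligible quantity absorbed into $V_0$. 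Using the hypothesis of $Cp^{-2}$ triangle-free vertices in $H$, we match each bad vertex to a distinct triangle-free vertex of $H$, choosing these far apart in $H$. Each such $x$ has independent neighbourhood $N_H(x)$, so embedding $x\mapsto v$ only requires placing $N_H(x)$ into $N_G(v)$ with no internal edges. This needs a careful greedy or matching argument inside $N_G(v)$, spread over clusters so that $v$ has large degree into each cluster used.

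After these pre-embeddings we balance the cluster sizes and invoke the sparse blow-up lemma with image restrictions for the neighbours of the pre-embedded vertices. This finishes the embedding of $H$.
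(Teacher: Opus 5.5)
There is a genuine gap. Your architecture matches the paper's: a regular partition with exceptional set $V_0$ of size $O(p^{-2})$, far-apart triangle-free vertices of $H$ sent onto $V_0$, the extension property used to allocate the rest of $H$ around a small prescribed part, and the sparse blow-up lemma with image restrictions. But the step that carries the proof is left as ``a careful greedy or matching argument'', and the requirement you give for it is the wrong one.

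Vertices of $V_0$ are exactly those whose $\Gamma$-neighbourhoods may have atypical size in a cluster or fail to inherit regularity. So you cannot simply put the neighbours $y$ of $x$ into clusters with restricting set $J_y=\{v\}$. Conditions \ref{itm:IR5} and \ref{itm:IR6} would then require $|N_\Gamma(v;V_i)|=(1\pm\eps)p|V_i|$ and that $(N_\Gamma(v;V_i),V_j)$ be lower-regular in $G$. The condition ``$v$ has large degree into each cluster used'' gives neither.

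Instead you must pre-embed $N_H(x)$ onto concrete vertices $w_1,\dots,w_\ell\in N_G(v)$. No adjacencies are needed among them precisely because $x$ lies in no triangle. The restricting vertices for the second neighbourhood of $x$ are then the $w_i$ rather than $v$. The $w_i$ must be chosen so that, for the clusters $A_1,\dots,A_K$ of one $K_{\Delta+1}$ of $R'$:
\begin{itemize}
\item the common neighbourhoods $\bigcap_{i\in\Lambda}N_\Gamma(w_i;A_j)$ have size $(1\pm\eps^*)p^{|\Lambda|}|A_j|$;
\item they contain at least $\zeta p^{|\Lambda|}|A_j|$ common $G$-neighbours;
\item pairs of them are lower-regular \emph{in $G$}.
\end{itemize}
In addition, the second neighbourhood of $x$ together with its neighbours must be allocated, properly coloured, to those $K$ clusters. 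This allocation, not $x$ and its neighbours (which go to actual vertices), is the small prescribed part that the extension property extends.

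The $G$-statements here concern the adversarial subgraph, so they do not follow from the properties of $\Gamma$ at $p\ge C(\log n/n)^{1/\Delta}$ that you cite. The paper gets them from the common neighbourhood lemma (Lemma~\ref{lem:resil_common_nbhd}). It applies this to a set $W\subseteq N_G(v)$ of $\Omega(pn)$ vertices that all have $G$-degree at least $2dp|V_{ij}|$ into every cluster of a single clique. Such a set exists by pigeonhole from $\delta(G)\ge(\exth+\gamma)pn$ and $\exth\ge\Delta/(\Delta+1)$ (Claim~\ref{clm:CNL34}).

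Two bookkeeping issues also bite at the bottom of the range of $p$. For $\Delta=2$ and $p$ near $(\log n/n)^{1/2}$, $|V_0|$ can be of order $p^{-2}\gg pn$, so the pre-embedded images can vastly outnumber any vertex degree.
\begin{itemize}
\item A naive greedy may exhaust $N_G(v)$ for a later $v\in V_0$.
\item Nothing in your plan stops the images from covering much of a surviving vertex's neighbourhood in an $R'$-adjacent cluster. That would destroy the super-regularity and inheritance Lemma~\ref{lem:rg_blowup} needs on $G-\im(\varphi)$, and rebalancing cluster sizes afterwards does not repair it.
\end{itemize}
The paper confines all images to a pre-chosen random-like set $S\subseteq V(G)$ of size $\mu n$ (Claim~\ref{clm:S}), in which every vertex has at most $10\mu pn\le\eps p|V_{ij}|$ neighbours. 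It processes $V_0$ in order of fewest remaining available neighbours in $S$, and uses Lemma~\ref{lem:rand_G_property} to show at least $\mu pn/4$ always remain (Claim~\ref{clm:avgS}).

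Your allocation of $H$ must also produce exact cluster sizes and an $(\alpha,R')$-buffer along the clique factor $R'$, avoiding the image-restricted vertices and their neighbours. This is what Lemma~\ref{lem:Lemma_H} supplies. Applying the extension property to a blow-up of $R$ with the true cluster sizes makes your ``moving vertices'' repair unnecessary; that repair could in any case break the $R$-partition.

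Finally, concentration of edge counts in $\Gamma$-neighbourhoods does not bound the number of bad vertices by $O(p^{-2})$. That bound comes from sparse regularity-inheritance results, packaged in Lemma~\ref{lem:resil_G}, which you may take as a black box.
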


Note that Theorem~\ref{thm:resil_main} implies that the local resilience of $G(n,p)$ for the containment of bounded degree spanning subgraphs with at least~$Cp^{-2}$ vertices not in any triangles is at least~$1 - \exth$. We further remark that for $\Delta = 1$, the lower and upper bounds on the extension threshold match, giving~$\delta_{\rm e}(1) = 1/2$. Hence, an analogous version of Theorem~\ref{thm:resil_main} for $\Delta = 1$ follows directly from the local resilience result for the containment of Hamilton cycles of Lee and Sudakov~\cite{LeeSud}.

The proof of Theorem~\ref{thm:resil_main} uses the sparse blow-up lemma for random
graphs from~\cite{sparse_blowup}, tools from~\cite{sparse_bandwidth}
for handling certain exceptional vertices, as well as a decomposition
result for bounded degree graphs from~\cite{AllBoeKohNev:robust}. In
contrast to~\cite{sparse_bandwidth}, the decomposition result is needed to handle graphs
with no hypothesis on their bandwidth.

\noindent \textbf{Organisation.} The remainder of this paper is organised as follows.  In Section~\ref{sec:prelim} we
introduce notation and tools needed in our proof.
Section~\ref{sec:Regularity_SBL} is dedicated to regularity
preliminaries, including the sparse blow-up lemma for random graphs.
Then, in Section~\ref{sec:sketch}, we give an overview of the proof of
Theorem~\ref{thm:resil_main} and collect some useful lemmas along the
way. This is followed by the proof of Theorem~\ref{thm:resil_main} in Section~\ref{sec:resil_proof}. 

\section{Notation and Main Tools}
\label{sec:prelim}

All graphs shall be assumed to be simple and finite. Most of the
graph-theoretic notation that we use is standard. We shall use $|G|$
to denote the size of the vertex set of $G$. Given two disjoint subsets
of vertices $A, B \subseteq V(G)$, we let~$E_G(A,B)$ 
be the set of edges of~$G$ with one endpoint in~$A$ and the other
endpoint in~$B$.  The minimum and maximum degrees of a graph~$G$ are denoted
by~$\delta(G)$ and~$\Delta(G)$, respectively. For any graph~$H$, an
$H$-factor in $G$ is a collection of vertex-disjoint copies of $H$ in
$G$, such that each vertex of $V(G)$ lies in exactly one of these
copies of $H$.

In our proofs, the letter~$G$ will be reserved for a subgraph of the random
graph~$\Gamma \sim G(n,p)$ of high minimum degree, into which a
bounded degree graph~$H$ needs to be embedded. The
variable~$n$ will typically denote the size of the vertex sets of $H$,
$G$, and $\Gamma$. For subsets $S,\, T \subseteq V(G)$, we
use~$N_G(S; T)$ to denote the neighbours, not necessarily common, of
the vertices of~$S$ in the graph~$G$ that lie in the set
$T\setminus S$.  For simplicity, we use $N_G(v; T)$ when~$S$ is a
singleton set~$\{v\}$, and~$N_G(S)$ when $T = V(G)$.  Further, we
shall use~$N^*_G(S; T)$, and analogously~$N^*_G(S)$, to denote the common
neighbours of the vertices in~$S$.  When $S=\emptyset$, this notation
should be understood to mean~$T$ and~$V(G)$,  respectively.  The
\emph{closed neighbourhood} $N_G(S) \cup S$
of~$S$ shall be denoted by~$N_G[S]$.  The
subscript~$G$ will be dropped in the neighbourhood notation
whenever the ambient graph is clear from the context.

Suppose~$H$ and~$G$ are two graphs given with a vertex map
$\varphi: V(H) \to V(G)$.  Then, for any edge $e = uv$ of~$H$, the
notation~$\varphi(e)$ shall be used to denote the potential
edge~$\varphi(u)\varphi(v)$ on the vertex set~$V(G)$. We shall say
that an injective map $\varphi: V(H) \rightarrow V(G)$ is an
\emph{embedding} of~$H$ into~$G$, if~$\varphi$ embeds a copy of~$H$
into~$G$ (i.e., $\varphi(E(H)) \subseteq E(G)$).

Given an $n$-element set~$X$ and a positive integer~$k$, a partition
$X_1 \sqcup \dots \sqcup X_k$ of~$X$ is said to be \emph{equitable} if
$|X_i| = \lfloor n/k \rfloor$ or $\lceil n/k\rceil$ for all
$i \in [k]$, that is, all the parts~$X_i$ of the partition are of size
as equal as possible. Finally, we use the notation $x = a \pm \delta$ [similarly, $x \neq a \pm \delta$] for $x, a \in \mathbb{R}$ and~$\delta > 0$, to concisely denote the statement $x \in (a-\delta, a+\delta)$ [similarly $x \notin (a-\delta, a+\delta)$]. By the notation~$a/bc$, we mean $a/(bc)$.

We will require the following theorem of Hajnal and Szemer\'{e}di~\cite{Hajnal_szemeredi}, which allows for partitioning a graph with bounded maximum degree into independent sets of nearly equal sizes.  

\begin{theorem}[Hajnal--Szemer\'{e}di theorem]
\label{thm:Hajnal-Szemeredi}
Let $\Delta \in \mathbb{N}$ be given. If\/ $H$ is a graph with maximum degree~$\Delta(H) \leq \Delta$, then\/ $H$ has an equitable partition into\/ $\Delta + 1$ independent sets.     
\end{theorem}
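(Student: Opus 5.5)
The plan is to follow the standard proof of the Hajnal--Szemer\'edi theorem in its equivalent form as an equitable colouring statement: every graph $H$ with $\Delta(H)\le\Delta$ has a proper colouring with $k=\Delta+1$ colours whose classes have sizes differing by at most one. First reduce to the case $k \mid n$ by adding isolated vertices (at most $k-1$ of them), which changes neither the maximum degree nor the validity of the conclusion once the added vertices are deleted. Then proceed by induction on the number of edges of $H$. Removing one edge $xy$, the induction hypothesis gives an equitable $k$-colouring of $H-xy$ with all classes of size exactly $s=n/k$. If $x$ and $y$ lie in different classes we are done. Otherwise, move $x$ into some class $V_j$ containing no neighbour of $x$; such a class exists because $x$ has at most $\Delta<k$ neighbours. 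This produces a nearly equitable colouring: one class $V^-$ of size $s-1$, one class $V^+$ of size $s+1$, and all other classes of size $s$.

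The main work is to repair such a nearly equitable colouring. Build an auxiliary digraph $\mathcal{H}$ on the colour classes, with an arc $V\to W$ whenever some $u\in V$ has no neighbour in $W$, so that $u$ can be moved to $W$. If $V^-$ is reachable from $V^+$, shifting one vertex along each arc of a directed path restores equitability. Otherwise let $\mathcal{A}$ be the set of classes that can reach $V^-$, let $\mathcal{B}$ be the rest (which contains $V^+$), and put $a=|\mathcal{A}|$, $b=|\mathcal{B}|$, $A=\bigcup\mathcal{A}$, $B=\bigcup\mathcal{B}$. Every vertex of $B$ then has a neighbour in every class of $\mathcal{A}$. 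Counting degrees gives $|A|=as-1$, and each $y\in B$ has at least $a$ neighbours in $A$.

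Now use a discharging argument on the edges between $A$ and $B$. Call $z\in A$ \emph{solo} if some class $W\in\mathcal{A}$ has $z$ as the unique neighbour in $W$ of some $y\in B$. Using the bound $\Delta+1=a+b$ on degrees, one shows that some class $X\in\mathcal{A}$ that is terminal (accessible to $V^-$ via classes avoiding it) contains a solo vertex $z$ with few neighbours in $B$. Swapping $z$ with a suitable $y\in B$ either creates a path from $V^+$ to $V^-$, or increases $a$. Hence a potential function such as $(a,\ \text{number of accessible vertices})$ strictly increases, so the process terminates with an equitable colouring. The inner induction here is on $b$.

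The main obstacle is this last step: selecting $z$ and $y$ so that the swap genuinely makes progress. It requires the careful weighted counting of solo edges from the Kierstead--Kostochka short proof. I would reproduce that argument, or, since the statement is the classical Hajnal--Szemer\'edi theorem, simply cite~\cite{Hajnal_szemeredi} rather than reprove it in this paper.
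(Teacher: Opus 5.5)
The paper gives no proof of this statement. It quotes the classical theorem with a citation to \cite{Hajnal_szemeredi}, so your closing suggestion to cite rather than reprove is exactly the paper's route.

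If you do write out the Kierstead--Kostochka argument, your first step fails as stated. Padding $H$ with isolated vertices does not work, because nothing prevents two padding vertices from landing in the same colour class, and deleting them then destroys equitability. For example, take $\Delta=2$, $H$ the path $abcd$, and add isolated vertices $u_1,u_2$. The padded graph has the equitable $3$-colouring $\{u_1,u_2\},\{a,c\},\{b,d\}$, which after deletion has class sizes $0,2,2$. The standard fix is to add a disjoint clique $K_q$ with $q=k\lceil n/k\rceil-n\le\Delta$. This keeps the maximum degree at most $\Delta$ and forces the padding vertices into distinct classes, so deleting them removes at most one vertex from each class.

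Your account of how the repair step makes progress is also not right. Consider the case where a solo vertex $z$ of a terminal class $W$, with solo neighbour $y$ in some $Y\in\mathcal{B}$, is movable to another class $X\in\mathcal{A}$. There one moves $z$ to $X$ and $y$ to $W$, then shifts along an $X$--$V^-$ path avoiding $W$.
\begin{itemize}
\item This gives every class of $\mathcal{A}$ size $s$ but leaves $Y$ with $s-1$ vertices. It does not create a $V^+$--$V^-$ path, and $a$ need not increase, since accessibility must now be recomputed towards $Y$.
\item Progress comes instead from applying the lemma to $H[B\setminus\{y\}]$ with the $b$ classes of $\mathcal{B}$. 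This is legitimate because each vertex of $B$ has at least $a$ neighbours in $A$, so $\Delta(H[B])\le\Delta-a=b-1$.
\item In that smaller instance $Y$ is accessible, so its value of $b$ is strictly smaller. This recursion is where the induction on $b$ actually lives.
\end{itemize}
An edge count between a terminal class and $B$ shows such a $z$ exists unless $a$ is large compared with $b$. Only in that remaining case are the discharging and the swap $z\leftrightarrow y$ that enlarges $\mathcal{A}$ needed. So a potential function of the form $(a,\dots)$ on a single colouring is not the right measure; the decreasing quantity is $b$ across recursive calls on smaller instances.
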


Theorem~\ref{thm:Hajnal-Szemeredi} has many useful applications. For instance, given a graph~$H$ with $\Delta(H) \leq \Delta$ and some constant~$C$, we shall use Theorem~\ref{thm:Hajnal-Szemeredi} to obtain an equitable partition of $V(H)$ with parts of size~$\Theta(n)$ and such that for every pair of vertices~$x,y$ within a part, we have $\dist_H(x,y) \geq C$.

\subsection{Probabilistic Tools}

Next, we gather some probabilistic tools that will be useful in our proof of Theorem~\ref{thm:resil_main}. We begin with a Chernoff bound for hypergeometrically distributed random variables. Recall that a random variable~$X$ is said to follow a \emph{hypergeometric distribution} with parameters $(n, m, k)$ 
if given a set $\mathcal{U}$ of size $n$ and a fixed subset $\mathcal{A} \subseteq \mathcal{U}$ of size $m$, the random variable $X$ counts the size of the intersection~$\mathcal{A} \cap \mathcal{T}$ for a subset $\mathcal{T} \subseteq \mathcal{U}$ of size~$k$ generated uniformly at random among all the~$\binom{n}{k}$~such subsets of~$\mathcal{U}$. Note that the expected value $\mathbb{E}(X)$ of~$X$ equals~$mk/n$. We use the following concentration bound for~$X$, which, while weaker than the standard Chernoff bound, is a more convenient form for our application (see, e.g., \mcite[Theorem 2.1 and Theorem 2.10]{Random_graphs_textbook_janson}). 

\begin{lemma}
[Hypergeometric Chernoff bound]
\label{lem:chern_hypgeom}
Let $X$ be a hypergeometrically distributed random variable with parameters $(n, m, k)$. Then for any $\eps \in (0,1)$ and for $t \geq \eps\, \mathbb{E}(X)$, we have
\[
\mathbb{P}\Bigl(\bigl|X - \mathbb{E}(X)\bigr| \geq t \Bigr) \leq
2e^{-\eps^2t/3}\,.
\]
\end{lemma}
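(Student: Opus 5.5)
The plan is to derive the bound directly from the standard Chernoff inequalities for sums of independent indicators, transferred to the hypergeometric setting. Write $\mu = \mathbb{E}(X) = mk/n$. By \mcite[Theorem 2.1]{Random_graphs_textbook_janson}, a binomial random variable $Y$ with mean $\mu$ satisfies, for all $t \ge 0$,
\[
\mathbb{P}(Y \ge \mu + t) \le \exp\Bigl(-\frac{t^2}{2(\mu + t/3)}\Bigr), \qquad \mathbb{P}(Y \le \mu - t) \le \exp\Bigl(-\frac{t^2}{2\mu}\Bigr).
\]
By \mcite[Theorem 2.10]{Random_graphs_textbook_janson}, the same two tail bounds hold verbatim when $Y$ is replaced by a hypergeometric random variable $X$ with mean $\mu$. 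That theorem rests on Hoeffding's observation that sampling without replacement is dominated in convex order by sampling with replacement, so the moment-generating-function argument goes through. I will take this transfer as given.

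It then remains to show that each tail is at most $e^{-\eps^2 t/3}$ whenever $t \ge \eps\mu$ and $\eps\in(0,1)$; a union bound over the two tails then gives the factor $2$. The hypothesis $t \ge \eps \mu$ is used as $\mu \le t/\eps$.

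For the upper tail,
\[
\frac{t^2}{2(\mu + t/3)} \ \ge\ \frac{t^2}{2(t/\eps + t/3)} \ =\ \frac{\eps t}{2 + 2\eps/3} \ \ge\ \frac{3\eps t}{8} \ \ge\ \frac{\eps^2 t}{3}.
\]
The middle inequality uses $\eps < 1$, so that $2 + 2\eps/3 \le 8/3$. The last inequality uses $\eps \le 1$, since then $3\eps/8 \ge 3\eps^2/8 \ge \eps^2/3$.

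For the lower tail,
\[
\frac{t^2}{2\mu} \ \ge\ \frac{\eps t}{2} \ \ge\ \frac{\eps^2 t}{3},
\]
again using $\eps \le 1$. If $\mu = 0$ or $t > \mu$, the lower-tail event is empty or trivial and the bound holds anyway.

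There is no real obstacle here. The only nontrivial input is the transfer of the binomial Chernoff bounds to the hypergeometric distribution, which is exactly \mcite[Theorem 2.10]{Random_graphs_textbook_janson}. The remaining work is the elementary comparison of exponents above, where the deliberately weakened constant $\eps^2/3$ absorbs both tails uniformly.
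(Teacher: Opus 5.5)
Your proposal is correct and takes the same route as the paper. The paper gives no separate proof; it simply points to Theorems~2.1 and~2.10 of~\cite{Random_graphs_textbook_janson}, and your comparison of exponents (using $\mathbb{E}(X)\le t/\eps$ and $\eps<1$ for each tail, then a union bound) is the routine step it leaves implicit.
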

 
We shall also use the following property of random graphs. Roughly speaking, it states that with probability tending to $1$, nearly all vertices in $G(n,p)$ have approximately the expected number of neighbours within large enough subsets of $V(G)$. See \cite[Proposition 19]{sparse_bandwidth} for a short proof. 

\begin{lemma}
\label{lem:rand_G_property}
For every $\eps>0$ there exists a constant $C_{\RG} >0$ such that for every $0<p=p(n)<1$, the following holds  asymptotically almost surely for~$\Gamma\sim G(n,p)$. For every $X \subseteq V(\Gamma)$ with $|X| \geq C_{\RG}\, p^{-1} \log n$, there are at most $C_{\RG}\, p^{-1} \log \bigl(en/|X|\bigr)$ vertices $v \in V(\Gamma)$ that satisfy
\[
  \bigl||N_{\Gamma}(v;X)| - p |X|\bigr| > \eps p
  \bigl|X\bigr| .
\]
\end{lemma}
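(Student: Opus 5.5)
The plan is a first-moment argument over all possible bad sets, combined with a union bound. Set $C_{\RG}$ large in terms of $\eps$. It suffices to handle sets $X$ of a fixed size $x=|X|\ge C_{\RG}p^{-1}\log n$ and a fixed size $b=\lceil C_{\RG}p^{-1}\log(en/x)\rceil$ of the exceptional set, and then sum over $x\le n$. Suppose the conclusion fails for some $X$. Then there is a set $B$ of $b$ vertices which are all bad for $X$. Split $B$ into those with too many neighbours in $X$ and those with too few; one of these parts, call it $B'$, has size at least $b/2$. I will bound the probability that some pair $(X,B')$ exists with every $v\in B'$ having, say, $|N_\Gamma(v;X)|>(1+\eps)p|X|$. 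The other direction is symmetric.

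The key step is that summing degrees over $B'$ gives a single binomial-type variable. Write $e(B',X)$ for the number of pairs $(v,u)$ with $v\in B'$, $u\in X$ and $uv\in E(\Gamma)$. This count equals $\sum_{v\in B'}|N_\Gamma(v;X)|$ and exceeds $(1+\eps)p|B'||X|$. The pairs inside $B'\cap X$ count an edge twice, so I split $B'$ into $B'\setminus X$ and $B'\cap X$.
\begin{itemize}
\item For the part $B'\setminus X$, the edges involved are independent, and the count is $\mathrm{Bin}(|B'\setminus X|\,|X|,p)$.
\item For the part inside $X$, I use $e(B'\cap X, X)=2e(B'\cap X)+e(B'\cap X,X\setminus B')$. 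This is again a combination of independent binomials with mean about $p|B'\cap X||X|$; note $|B'|\le b\ll |X|$ in the relevant range, since $b\le x$ follows from $x\ge C_{\RG}p^{-1}\log n$.
\end{itemize}
Either part exceeding its mean by a factor $1+\eps/2$ has probability at most $\exp(-c\eps^2 p\,|B'|\,|X|)$ by Chernoff, for the part with the larger size (which has at least $|B'|/2$ elements).

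The main obstacle is the union bound count. The number of choices of $(X,B')$ is at most
\[
\binom{n}{x}\binom{n}{b}\le \exp\bigl(x\log(en/x)+b\log(en/b)\bigr).
\]
The probability bound is $\exp(-c\eps^2 p\, b\, x)$. Because $b\ge C_{\RG}p^{-1}\log(en/x)$, we get $p b x\ge C_{\RG}x\log(en/x)$, which beats the first term once $C_{\RG}\gg\eps^{-2}$. Because $x\ge C_{\RG}p^{-1}\log n$, we also get $p b x\ge C_{\RG}\, b\log n$, which beats $b\log(en/b)$. Hence each fixed $x$ contributes at most $\exp(-\Omega(C_{\RG}\log n))\le n^{-2}$, and summing over $x$ and the two directions gives $o(1)$.

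The care needed is in the interplay of the two lower bounds on $b$ and $x$, and in handling the overlap $B'\cap X$. If $p|X|$ is so small that $\eps p|X|<1$, then the deviation is still a genuine multiplicative deviation, so Chernoff in the form $\exp(-\eps^2\mu/3)$ with $\mu=p|B'||X|$ still applies. If $b>n$, the statement is vacuous.
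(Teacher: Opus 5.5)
Your proposal is correct and is essentially the standard first-moment argument behind the proof the paper cites, \cite[Proposition 19]{sparse_bandwidth}: Chernoff applied to the edge count between $X$ and a set of vertices that are all bad in the same direction, a union bound over $\binom{n}{x}\binom{n}{b}$ choices, and the two inequalities $pbx\ge C_{\RG}\,x\log(en/x)$ and $pbx\ge C_{\RG}\,b\log n$. One small correction: $b\ll|X|$ is false in general (only $b\le|X|$ holds, and $b$ can be a constant fraction of $|X|$), but this is harmless. Since every vertex of $B'$ is bad in the same direction, you can pass to the larger of $B'\setminus X$ and $B'\cap X$, call it $B''$, which has size at least $b/4$; union-bound over sets of exactly that size; and in the case $B''\subseteq X$ apply Chernoff to the sum of independent $[0,1]$-valued variables $\tfrac12\bigl(2e(B'')+e(B'',X\setminus B'')\bigr)$, which costs only a constant factor in the exponent.
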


\section{Regularity and the Sparse Blow-up Lemma}
\label{sec:Regularity_SBL}

We now turn to the notions of regularity and the regularity method. 
In his celebrated regularity lemma, Szemer\'{e}di~\cite{Sz_regularity_OG} showed that the vertex set of any large graph can be equitably partitioned into a bounded number of parts, such that for most pairs of parts the edges crossing this pair are distributed ``random-like''.
The regularity lemma has been an extensively used tool in extremal
graph theory, often in conjunction with the blow-up lemma of
Koml\'{o}s, S\'{a}rk\"{o}zy, and Szemer\'{e}di~\cite{Blowup_lemma},
which allows the embedding of certain bounded degree spanning subgraphs into~$G$, provided that we can find a suitable structure in the regular partition given by the regularity lemma.

Our proof relies on sparse versions of the regularity lemma and the blow-up lemma. More precisely, we shall use a consequence of such a sparse regularity lemma established in~\cite{sparse_bandwidth}, which we state in Section~\ref{sec:sketch} (see Lemma~\ref{lem:resil_G}). Hence, we shall not explicitly formulate a sparse regularity lemma here. We do, however, need to formulate the sparse blow-up lemma, as proved in \cite{sparse_blowup}. This requires a series of definitions.

 For $0 < p < 1$, the \emph{$p$-density} of the pair $(A,B)$ is defined as $d_{G,p}(A,B) \coloneqq |E_G(A,B)|/(p|A||B|)$. For an aptly chosen $p$, this normalisation allows for the comparison of the density of a pair to the density of the entire graph; in particular when we think of~$G$ as a subgraph of a random graph~$\Gamma \sim G(n,p)$, as will be the case for us. 

 The sparse $\eps$-regularity notion we shall use is as follows.
 The pair $(A,B)$ is \emph{$(\eps,d,p)$-lower-regular} in~$G$ if for all $A'\subseteq A$ with $|A'|\ge\eps|A|$ and $B'\subseteq B$ with $|B'|\ge\eps |B|$, we have $d_{G,p}(A',B') \geq d - \eps$.
 The pair~$(A,B)$ in $G\subseteq\Gamma$ is \emph{$(\eps,d,p)$-super-lower-regular} in~$G$ if it
  is $(\eps,d,p)$-lower-regular and for every $u\in A$ and $v\in B$ we have
\begin{align*}
    \deg_G(u;B) &>(d-\eps) \max\{p |B|,\tfrac12\deg_{\Gamma}(u;B)\}\,\,\text{ and}\\
    \deg_G(v;A) &>(d-\eps) \max\{p |A|, \tfrac12\deg_{\Gamma}(v;A)\}\,.
 \end{align*}
 
 Note that in the definitions above we only require a lower bound
 on~$d(A', B')$. This differs from the usual definition of $\eps$-regularity, and is sometimes called \emph{dense} in the literature, for example~\cite{dense_reg}. For better readability, throughout this paper, we shall use the term \emph{regularity} to mean \emph{lower-regularity}, and shall use the latter whenever a distinction is useful for emphasis or clarity. 
The following lemma states that lower-regular pairs are
 ``robust'' in view of small alterations of the respective vertex
 sets. For a proof, see for example~\cite[Proposition~12]{sparse_bandwidth}.

\begin{lemma}
  \label{lem:sparse_reg_robust}
  Let $(X,Y)$ be an $(\eps,d,p)$-lower-regular pair in a graph $G$ and
  let $\hat{X}$ and $\hat Y$ be two subsets of $V(G)$ such that
  $|X \symmdiff \hat{X}| \leq \mu |X|$ and
  $|Y \symmdiff \hat Y| \leq \nu |Y|$ for some $0 \leq \mu, \nu \leq
  1$. Then $(\hat X, \hat Y)$ is $(\hat \eps, d, p)$-lower-regular for $\hat \eps \coloneqq \eps + 2\sqrt{\mu} + 2 \sqrt{\nu}$.
\end{lemma}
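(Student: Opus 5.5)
We prove Lemma~\ref{lem:sparse_reg_robust} directly from the definition of lower-regularity, by comparing edge counts between large subsets of $(\hat X,\hat Y)$ and of $(X,Y)$. We may assume $\hat\eps<1$, since otherwise there is nothing to prove; so $\mu,\nu<1/4$. Fix $\hat X'\subseteq\hat X$ with $|\hat X'|\ge\hat\eps|\hat X|$ and $\hat Y'\subseteq\hat Y$ with $|\hat Y'|\ge\hat\eps|\hat Y|$. Set $X'\coloneqq\hat X'\cap X$ and $Y'\coloneqq\hat Y'\cap Y$. Then $|\hat X'\setminus X'|\le|\hat X\setminus X|\le\mu|X|$, and similarly $|\hat Y'\setminus Y'|\le\nu|Y|$. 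We also have $|\hat X|\ge(1-\mu)|X|$ and $|\hat X|\le(1+\mu)|X|$, with the analogous bounds for $\hat Y$.

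\textbf{Step 1: the intersections are large.} We need $|X'|\ge\eps|X|$, which we obtain from
\[
|X'|\ge\hat\eps(1-\mu)|X|-\mu|X|
\ge\bigl(\eps+2\sqrt\mu\bigr)(1-\mu)|X|-\mu|X|\ge\eps|X| .
\]
The last inequality uses $\mu\le\sqrt\mu$ and $\mu<1/4$, which give $2\sqrt\mu(1-\mu)-\mu-\eps\mu\ge\tfrac32\sqrt\mu-2\mu\ge 0$. The same computation gives $|Y'|\ge\eps|Y|$. Lower-regularity of $(X,Y)$ therefore yields $e_G(X',Y')\ge(d-\eps)p|X'||Y'|$.

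\textbf{Step 2: comparing densities.} Since $X'\subseteq\hat X'$ and $Y'\subseteq\hat Y'$, we have $e_G(\hat X',\hat Y')\ge e_G(X',Y')$, so
\[
d_{G,p}(\hat X',\hat Y')\ge(d-\eps)\frac{|X'|}{|\hat X'|}\cdot\frac{|Y'|}{|\hat Y'|}.
\]
Now
\[
\frac{|X'|}{|\hat X'|}\ge1-\frac{\mu|X|}{|\hat X'|}\ge1-\frac{\mu}{\hat\eps(1-\mu)} .
\]
Using $\hat\eps\ge2\sqrt\mu$ and $1-\mu\ge3/4$, this is at least $1-\tfrac23\sqrt\mu$. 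Similarly $|Y'|/|\hat Y'|\ge1-\tfrac23\sqrt\nu$. Hence
\[
d_{G,p}(\hat X',\hat Y')\ge(d-\eps)\bigl(1-\tfrac23\sqrt\mu-\tfrac23\sqrt\nu\bigr)\ge d-\eps-\tfrac23\sqrt\mu-\tfrac23\sqrt\nu\ge d-\hat\eps .
\]
The middle inequality uses $0\le d-\eps\le1$, which we may assume: if $d\le\eps$ the claim is trivial, and densities are normalised so that $d\le1$. If $d>1$ is allowed, one instead writes $(d-\eps)(1-x)\ge d-\eps-x\,d$ and absorbs the extra term into the slack between $\tfrac23$ and $2$ in $\hat\eps$.

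\textbf{Main obstacle.} The only delicate point is the constant bookkeeping. This is exactly why the bound carries $2\sqrt\mu$ rather than $\mu$: the $\sqrt{\cdot}$ arises because $\hat X'$ may be as small as about $2\sqrt\mu|X|$, so losing $\mu|X|$ vertices costs a relative factor of order $\sqrt\mu$. I would check the inequalities in Step 1 and Step 2 against the edge cases $\mu$ close to $1/4$ and $\eps$ close to $1$. If they fail, the fallback is the trivial regime $\hat\eps\ge1$ or $d-\hat\eps\le0$, in which the conclusion is vacuous.
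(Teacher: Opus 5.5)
Your proof is correct, and it is the standard intersect-and-compare argument. The paper does not prove this lemma itself; it cites \cite[Proposition~12]{sparse_bandwidth}. The argument there intersects $\hat X',\hat Y'$ with $X,Y$, shows the intersections still have sizes at least $\eps|X|$ and $\eps|Y|$, and loses only a factor $(1-O(\sqrt\mu))(1-O(\sqrt\nu))$ in $p$-density. That is exactly what you do, and your constants check out.

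One caveat concerns your aside on $d>1$. The absorption you describe only works for $d\le 3$. In fact the statement genuinely fails for large $d$: take a complete pair with $d=1/p$, and add $\mu|X|$ vertices with no neighbours in $Y$. So the paper's standing assumption $0<d<1$ is really needed, but that is precisely the regime your main argument covers.
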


In contrast to the dense setting, super-regularity is not sufficient
to obtain a blow-up lemma in the sparse setting. For instance, given
three pairwise $(\eps, d, p)$-super-regular sets $A$, $B$, and $C$, it
is possible that for some $a \in A$, the sets $N_G(a;B)$ and
$N_G(a;C)$ have no edges between them, which is problematic for
embedding graphs. For instance, this would prevent us from embedding a
triangle factor in the tripartite graph~$G[A,B,C]$. To remedy this,
the sparse blow-up lemma additionally requires the following notions
of regularity inheritance, which ensure that this problem cannot
occur.

\begin{definition}[Regularity inheritance]
\label{defn:reg_inheritance}
  Let $A$, $B$, and $C$ be vertex sets in the subgraph~$G$
  of~$\Gamma \sim G(n,p)$, where~$A$ and~$B$ are disjoint and~$B$ and~$C$
  are disjoint, but we do allow $A=C$. We say that $(A,B,C)$ has
  \emph{one-sided $(\eps,d,p)$-inheritance} if for each $u\in A$ the pair
  $\big(N_\Gamma(u;B),C\big)$ is $(\eps,d,p)$-regular.
  If in addition~$A$ and~$C$ are
  disjoint, then we say that $(A,B,C)$ has \emph{two-sided
    $(\eps,d,p)$-inheritance} if for each $u\in A$ the pair
  $\big(N_\Gamma(u;B),N_\Gamma(u;C)\big)$ is $(\eps,d,p)$-regular.
\end{definition}

The sparse version of the blow-up lemma, as given
in~\cite{sparse_blowup}, and unlike the dense blow-up lemma
from~\cite{Blowup_lemma}, does not require all $(\eps, d, p)$-regular
pairs in a regular partition of~$G$ to be super-regular. Instead, it
works with the whole regular partition (given by the so-called reduced
graph~$R$) as well as a substructure of this partition (given by a
spanning subgraph~$R'$ of~$R$). For this to work, roughly speaking,
the sparse blow-up lemma further requires as input a
family~$\tilde\cX$ of small sets of vertices of~$H$ such that the
two-sided inheritance condition, as defined above, is only required
with respect to the vertices in the members of~$\tilde\cX$.  The vertices of $\tilde{\mathcal{X}}$ are called \textit{buffer vertices}.  To make the setup precise, we introduce several
concepts in Definition~\ref{defn:sparseBL_generic_terms} below.

However, before we proceed, let us remark that in our proof of Theorem~\ref{thm:resil_main}, we shall not apply the
sparse blow-up lemma to embed~$H$ in~$G$ directly. We shall instead have to
embed a small set of vertices $X_E\subset V(H)$ onto some small set
$V_E\subset V(G)$ first, and then we shall be in position to apply the
blow-up lemma to embed $H'=H-X_E$ in $G'=G-V_E$.  Therefore, in what
follows, our definitions are given for graphs~$H'$ and~$G'$.

\begin{definition}
\label{defn:sparseBL_generic_terms} 
Let $G'$ and $H'$ be two graphs, given with partitions
$\mathcal{V}=\{V_i\}_{i\in[r]}$ and $\mathcal{X}=\{X_i\}_{i\in[r]}$ of their respective vertex sets. 
Let $R$ and $R'\subset R$ be two graphs on $r$ vertices.
Let $\tilde{\mathcal{X}}=\{\tilde{X}_i\}_{i\in[r]}$ be a family of subsets $\tilde{X}_i\subset X_i$. Let $0<\eps,\alpha,d,p<1$ be given.
\begin{itemize}
\item[$\bullet$] $\mathcal{V}$ and~$\mathcal{X}$ are said to be \emph{size-compatible} if $|V_i|=|X_i|$ for each $i\in[r]$.
\item[$\bullet$] For $\kappa\geq 1$, we say that $(G',\mathcal{V})$ is
  \emph{$\kappa$-balanced} if there exists $m\in\mathbb{N}$ such that
  $m\leq |V_i|\leq \kappa m$ for all $i\in[r]$. This is defined
  analogously for $(H', \mathcal{X})$. 
\item[$\bullet$] $(G',\mathcal{V})$ is said to be \emph{$(\eps,d,p)$-regular on~$R$} if
  $(V_i,V_j)$ is $(\eps,d,p)$-regular for each $ij\in E(R)$.    In
  this case, we also say that~$R$ is a \emph{reduced graph}
  for~$\mathcal{V}$ and call the parts of $\mathcal{V}$
  \emph{clusters}.
  
\item[$\bullet$] $(G',\mathcal{V})$ is said to be
  \emph{$(\eps,d,p)$-super-regular on~$R'$} if $(V_i,V_j)$ is
  $(\eps,d,p)$-super-regular for each $ij\in E(R')$. 

\item[$\bullet$] $(H',\mathcal{X})$ is an \emph{$R$-partition} if whenever there are
  edges of~$H'$ between~$X_i$ and~$X_j$, the pair $ij$ is an edge
  of~$R$.  
\item[$\bullet$] $(G',\mathcal{V})$ is said to have \emph{one-sided
    inheritance on~$R'$} if $(V_i,V_j,V_k)$ has one-sided
  $(\eps,d,p)$-inheritance for every pair of edges $ij,\,jk\in E(R')$.
\item[$\bullet$] $\tilde{\mathcal{X}}=\{\tilde{X}_i\}_{i\in[r]}$ is
  said to be an \emph{$(\alpha,R')$-buffer} for $(H',\mathcal{X})$ if
  for each $i\in[r]$
  \begin{enumerate}[label=\rom]
   \item $|\tilde{X}_i|\ge\alpha |X_i|$, and
   \item for each $x\in\tilde{X}_i$, the first and second
     neighbourhoods of~$x$ \emph{go along~$R'$}, that is, for each
     $xy,\,yz\in E(H')$ with $y\in X_j$ and $z\in X_k$ we have
     $ij\in E(R')$ and $jk\in E(R')$.
  \end{enumerate}
\item[$\bullet$] $(G',\mathcal{V})$ is said to have \emph{two-sided
    inheritance on~$R'$ for $\tilde{\mathcal{X}}$} if $(V_i,V_j,V_k)$
  has two-sided $(\eps,d,p)$-inheritance whenever there is a triangle
  $x_ix_jx_k$ in~$H'$ with $x_i\in \tilde{X}_i$, $x_j\in X_j$, and
  $x_k\in X_k$.
\end{itemize}
\end{definition}

The sparse blow-up lemma further affords so-called image restrictions,
which allow us to restrict the image of some vertices in the desired
embedding (vertices in the sets~$X_i^*$ in the definition that
follows).  These restrictions usually result from a pre-processing step in which some
vertices of the graph being embedded are mapped to some badly behaved
vertices of the host graph. 

More concretely, recall that we wish to
embed an $n$-vertex graph~$H$ into an $n$-vertex graph~$G$, and
suppose that we have a small set of vertices $V_E\subset V(G)$ that we
need to use in the embedding first (the badly behaved vertices).  In
such a scenario, we shall specify a set of
vertices $X_E\subset V(H)$ to map onto~$V_E$ in our desired
embedding.  This is the pre-processing step we mentioned above.  We
shall then use the sparse blow-up lemma to embed $H'=H-X_E$ into
$G'=G-V_E$.  Note that this embedding of~$H'$ into~$G'$ has to take
into account the way we mapped the vertices of~$X_E$ onto~$V_E$.
This requirement is encoded via image restrictions, which play a crucial role in our application of the sparse blow-up lemma.

Note that any vertex $x \in V(H')$ that is to be embedded may be adjacent to vertices in $X_E$ that have been pre-embedded to a set of vertices~$J_x \subseteq V_E$. These vertices in $J_x$ can restrict the candidate set~$I_x \subseteq V(G')$ for embedding the vertex~$x$. The sets~$J_x \subset V(\Gamma)\setminus V(G')$ and $I_x \subseteq V(G')$ will be referred to as \emph{restricting vertices} and \emph{image restrictions}, respectively. The requirements for image restrictions use an ambient
graph~$\Gamma$ (which in our application will be a random
graph~$G(n,p)$) and a density constant~$d>0$.  The set of pre-embedded vertices in~$\Gamma$ is given by the set~$V_E = V(\Gamma)\setminus V(G')$, where~$G'$ is the host graph to be
considered later in the sparse blow-up lemma.    

\begin{definition}[Image restrictions]\label{defn:image_restrict}  
  Let $\eps$, $d$, $p$, and $(H',\mathcal{X})$ and $(G',\mathcal{V})$ be as in
  Definition~\ref{defn:sparseBL_generic_terms} and
  $\Gamma\supseteq G'$.  Let $\mathcal{I}=\{I_x\}_{x\in V(H')}$ and $\mathcal{J}=\{J_x\}_{x\in V(H')}$ be
  collections of subsets of~$V(G')$ and
  $V(\Gamma)\setminus V(G')$, respectively.  Let
  $X_i^* \subseteq X_i$ denote the set of those vertices $x \in X_i$
  for which ${I_x\neq V_i}$, which we call the \emph{image-restricted}
  vertices in~$X_i$.  We say that~$\mathcal{I}$ and~$\mathcal{J}$ form
  a \emph{$(\rho, \zeta, \Delta, \Delta_J)$-restriction pair} if for
  each $i,\,j\in[r]$ and $x\in X_i$, we have
  \begin{enumerate}[label=\itmarab{IR}]
    \item\label{itm:IR1} $|X_i^*|\leq\rho|X_i|,$ 
    \item\label{itm:IR2} if $x \in X_i^*$, then $I_x\subseteq
    N^*_\Gamma(J_x;V_i)$ and $|I_x|\ge\zeta(dp)^{|J_x|}|V_i|$,
    \item\label{itm:IR3} if $x\in X_i^*$, then $|J_x|+\deg_{H'}(x)\le\Delta$ and 
    if $x\not\in X_i^*$, then $J_x=\emptyset$, 
    \item \label{itm:IR4} each vertex of~$\Gamma$ appears in at
      most~$\Delta_J$ sets of~$\mathcal{J}$, 
    \item\label{itm:IR5} 
    $\big|N^*_\Gamma(J_x;V_i)\big|=(p\pm\eps p)^{|J_x|}|V_i|$, and
    \item\label{itm:IR6} the pair $\bigl(N^*_\Gamma(J_x;V_i),N^*_\Gamma(J_y;V_j)\bigr)$ is $(\eps,d,p)$-regular in~$G'$ for each $xy \in E(H')$ with $x\in X_i^*$ and
      $y\in X_j$.
 \end{enumerate}
\end{definition}

We now state the sparse blow-up lemma for random graphs~\cite[Lemma 1.21]{sparse_blowup}. Note that the graph~$G'$ in Lemma~\ref{lem:rg_blowup} below need not be a
spanning subgraph of~$\Gamma$.

\begin{lemma}[Blow-up lemma for $G(n,p)$~\cite{sparse_blowup}]
  \label{lem:rg_blowup}
  For all $\Delta \geq 2$,~$\Delta_{R'}$,~$\Delta_J$,
  $\alpha,\zeta, d>0$, and~$\kappa>1$, there exist~$\eps_{\BL},\rho>0$
  such that for all~$r_1$ there exists a constant~$C_{\BL}$, such that for~$p>C_{\BL}\big(n^{-1}\log n\big)^{1/\Delta}$, the random graph~$\Gamma \sim G(n,p)$ a.a.s. satisfies the following.
   
  Let~$R$ be a graph on~$r\le r_1$ vertices and let~$R'\subset R$ be a spanning
  subgraph of~$R$ with~$\Delta(R')\leq \Delta_{R'}$.
  Let~$H'$ with~$\Delta(H')\leq \Delta$ and $G'\subset \Gamma$ be graphs with $\kappa$-balanced
  size-compatible vertex partitions~$\mathcal{X}=\{X_i\}_{i\in[r]}$ and~$\mathcal{V}=\{V_i\}_{i\in[r]}$, respectively, which have
  parts of size at least~$n/(\kappa r_1)$.
  Let~$\tilde{\mathcal{X}}=\{\tilde{X}_i\}_{i\in[r]}$ be a family of subsets of~$V(H')$, 
  let~$\mathcal{I}=\{I_x\}_{x\in V(H')}$ be a family of image restrictions, and 
  let~$\mathcal{J}=\{J_x\}_{x\in  V(H')}$ be a family of restricting vertices.
  Suppose that
  \begin{enumerate}[label=\itmarab{BL}]
  \item\label{itm:SBL1} $(H',\mathcal{X})$ is an $R$-partition, and
    $\tilde{\mathcal{X}}$ is an $(\alpha,R')$-buffer for
    $(H',\mathcal{X})$;
  \item\label{itm:SBL2} $(G',\mathcal{V})$ is
    $(\eps_{\BL},d,p)$-regular on~$R$ and
    $(\eps_{\BL},d,p)$-super-regular on~$R'$, and has one-sided
    inheritance on~$R'$ and two-sided inheritance on~$R'$ for
    $\tilde{\mathcal{X}}$; and
  \item\label{itm:SBL3} $\mathcal{I}$ and $\mathcal{J}$ form a
    $(\rho,\zeta,\Delta,\Delta_J)$-restriction pair.
  \end{enumerate}
  Then there is an embedding $\varphi\colon V(H')\to V(G')$ such that
  for each $x\in V(H')$, we have $\varphi(x)\in I_x$.
\end{lemma}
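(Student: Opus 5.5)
The statement is the blow-up lemma for $G(n,p)$ from~\cite{sparse_blowup}, and in the paper it is simply invoked. If I had to reprove it, I would follow the Koml\'os--S\'ark\"ozy--Szemer\'edi strategy: a randomised greedy embedding of most of $H'$, followed by a matching-based completion on the buffer vertices. Every step would be run with sparse regularity, so that candidate sets of size $\Theta(p^{k}|V_i|)$ remain usable.

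\emph{Setup.} First I would fix a dependency hierarchy $\eps_{\BL}\ll\rho\ll\alpha,\zeta,d,1/\Delta,1/\Delta_{R'},1/\kappa$, with $C_{\BL}$ large in terms of $r_1$. Next I would choose, inside each $\tilde X_i$, a subset $B_i$ of buffer vertices of size about $\alpha'|X_i|$. These should be pairwise far apart in $H'$ and far from the image-restricted set $X^*=\bigcup_i X_i^*$; this is possible by a greedy or Hajnal--Szemer\'edi type selection (Theorem~\ref{thm:Hajnal-Szemeredi} applied to a bounded power of $H'$), since $|X_i^*|\le\rho|X_i|$. Every vertex outside $B=\bigcup B_i$ is then ordered, with image-restricted vertices and neighbourhoods of buffer vertices placed early. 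The first phase embeds these vertices one at a time. For each unembedded $y$ I maintain a candidate set $C(y)\subseteq I_y\cap N^*_{\Gamma}(\varphi(\text{embedded nbrs of }y);V_i)$. The invariant is that $|C(y)|\ge (d-\eps')^{k}p^{k}|V_i|$, where $k$ is the number of embedded neighbours of $y$, and that $C(y)$ forms regular pairs with the candidate sets of the unembedded neighbours of $y$.

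\emph{Preserving the invariant.} When $x$ is embedded, I choose $\varphi(x)\in C(x)$ uniformly among the ``good'' vertices. A vertex is good if, for each unembedded neighbour $y$, the new set $C(y)\cap N_G(\varphi(x))$ has the right size and still forms regular pairs with the other relevant candidate sets. Size is controlled by lower-regularity together with the degree bounds in super-regularity. Regularity of the shrunken pairs is exactly what one- and two-sided inheritance (and \ref{itm:IR6} at the start) guarantee for all but few choices, via a sparse ``inheritance of regularity'' lemma in $\Gamma$. The hypothesis $p\ge C(\log n/n)^{1/\Delta}$ ensures that a $\Delta$-fold common neighbourhood of size $\Theta(p^{\Delta}n)\gg\log n$ is still large enough for Chernoff-type properties of $\Gamma$ such as Lemma~\ref{lem:rand_G_property} and Lemma~\ref{lem:chern_hypgeom}. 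To stop $C(y)$ being eaten up by previously used images, I would add a queue: any vertex whose available candidate set drops below a $\zeta'$-fraction is moved forward and embedded immediately. Randomness of the choices shows that few vertices are ever queued, and that used images are spread so that each $C(y)$ loses only a small fraction.

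\emph{Completion and main obstacle.} After phase one, only the buffer vertices $B_i$ remain, and in each cluster exactly $|B_i|$ vertices of $V_i$ are unused. Because buffer vertices are far apart, each $x\in B_i$ has all its neighbours embedded, with candidate set $C(x)\subseteq V_i$ of size $\Omega(p^{\deg x}|V_i|)$. I then need a perfect matching in the bipartite graph between $B_i$ and the free vertices $F_i\subseteq V_i$ (with $x\sim v$ iff $v\in C(x)$), checked via Hall's condition. Small sets are handled by the lower bound on $|C(x)\cap F_i|$, which comes from randomness of which vertices were left free. Large sets are handled by the degree condition from super-regularity: every free $v$ lies in many $C(x)$, since buffer neighbourhoods go along $R'$. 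I expect this Hall verification to be the main obstacle. In the sparse setting the candidate sets have size only $p^{\Delta}|V_i|$, so one must show that the random greedy phase leaves $F_i$ ``typical'' with respect to all these sparse sets at once. That requires a careful martingale/union-bound argument over the $\Gamma$-neighbourhood structure, and this is where the two-sided inheritance for triangles through $\tilde{\mathcal X}$ is indispensable.
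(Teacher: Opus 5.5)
The paper gives no proof of this lemma. It imports it as \cite[Lemma~1.21]{sparse_blowup} and uses it as a black box, so your opening sentence is exactly the paper's treatment, and nothing further is needed in this context. Your reproof outline follows the Koml\'os--S\'ark\"ozy--Szemer\'edi template that the cited source adapts to the sparse setting: a random greedy embedding of the non-buffer vertices with a queue, then a Hall-type completion on the buffer vertices. However, it is a roadmap rather than a proof, and the completion step as you describe it does not close.

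\textbf{The Hall step has a gap in the middle range.} In the sparse setting the auxiliary bipartite graph between $B_i$ and the free set $F_i$ has small degrees on both sides. A buffer vertex $x$ sees only about $(dp)^{\deg x}|F_i|$ free vertices, and a free vertex is only guaranteed to lie in $\Omega(p^{\Delta}|B_i|)$ of the sets $C(x)$. So your two mechanisms verify Hall's condition only for $|A|\lesssim p^{\Delta}|V_i|$ and for $|B_i|-|A|\lesssim p^{\Delta}|V_i|$.
\begin{itemize}
\item In the dense proof, the whole middle range is covered by linear degrees together with the fact that any $\eps'$-fractions of $B_i$ and $F_i$ span an edge.
\item Here that fact is still needed (from regularity and the randomness of the embedding of the buffer neighbourhoods), but it only handles $|A|,\,|F_i\setminus N(A)|\ge\eps'|V_i|$.
\item You additionally need an expansion statement for sets of size between roughly $p^{\Delta}|V_i|$ and $\eps'|V_i|$.
\end{itemize}
One way to get this expansion is from $\Gamma$ itself. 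Since buffer vertices are far apart, the sets $\varphi(N_{H'}(x))$, $x\in B_i$, are pairwise disjoint and disjoint from $V_i$. Hence, for distinct pairs $(x,v)$, the events $v\in N^*_\Gamma(\varphi(N_{H'}(x)))$ depend on disjoint edge sets. Apply Chernoff and a union bound over all choices of these sets, of $A$, and of a set $U\supseteq N(A)$ with $|U|=|A|$. This is affordable because $p^{\Delta}|V_i|\gg\log n$ once $C_{\BL}$ is large in terms of $r_1$, and it rules out $|N(A)|<|A|\le\eps'|V_i|$. A similar argument works from the side of $F_i$, with some care about buffer vertices of different degrees.

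\textbf{A smaller point on choosing buffers.} The bound $|X_i^*|\le\rho|X_i|$ alone does not let you keep buffers far from $X^*$. This is because $\rho$ is fixed before $r_1$, and $X^*$ may be spread over many clusters. What works is that first and second neighbourhoods of vertices in $\tilde X_i$ go along $R'$. Then $\Delta(R')\le\Delta_{R'}$ and $\kappa$-balancedness let you discard every vertex of $\tilde X_i$ within distance $2$ of $X^*$. This is what the completion step needs, but in general no larger distance can be guaranteed.
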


\section{Proof Sketch and Key Lemmas}
\label{sec:sketch}

In this section, we sketch an outline of our proof of Theorem~\ref{thm:resil_main} and collect some useful
lemmas along the way.
Given $\Gamma \sim G(n,p)$, suppose that the graphs~$G\subset\Gamma$ and~$H$ satisfy the conditions of
Theorem~\ref{thm:resil_main}. Our ultimate aim is to be able to pre-process the graphs~$G$
and~$H$ in a manner that reduces Theorem~\ref{thm:resil_main} to an application of the sparse blow-up lemma (Lemma~\ref{lem:rg_blowup}). Broadly speaking, this will be achieved by first obtaining a sparse-regular partition for the graph~$G \subseteq \Gamma$. Given such a regular partition for $G$, we shall partition the vertices of~$H$ and identify buffer vertices in a manner that satisfies the requirements of Lemma~\ref{lem:rg_blowup}. At this stage, if all conditions of Lemma~\ref{lem:rg_blowup} are met, then Theorem~\ref{thm:resil_main} would follow directly by an application of Lemma~\ref{lem:rg_blowup}.

However, while obtaining a sparse-regular partition for the graph~$G$, it is possible that there exists a small set of exceptional vertices $V_0 \subseteq V(G)$ that are ``badly behaved'' with respect to this partition. In the sparse setting, these exceptional vertices of $V_0$ will never satisfy the conditions of Lemma~\ref{lem:rg_blowup}, and thus need to be dealt with separately. We handle these exceptional vertices by \emph{pre-embedding} some vertices of the graph~$H$ into distinct vertices of~$V_0$. This pre-embedding will impose certain restrictions on the possible images of other vertices of~$H$, and hence needs to be done in a manner such that the resulting image restrictions satisfy condition~\ref{itm:SBL3}. Now, an application of Lemma~\ref{lem:rg_blowup} along with the pre-embedding returns a copy of~$H$ in~$G$, as required.   

Let us now give a more detailed outline of the proof of Theorem~\ref{thm:resil_main}. The majority of the proof builds on ideas and lemmas developed
in~\cite{sparse_bandwidth}. However, as we are not restricted to
graphs of small bandwidth, we also need an additional decomposition result for general bounded degree graphs (Lemma~\ref{lem:Lemma_H} below). 

Our proof of Theorem~\ref{thm:resil_main} begins with obtaining a
sparse-regular partition for the graph~$G$ with the help of the following
lemma, which follows from \cite[Lemma 26]{sparse_bandwidth} (by setting
$r_0 = 1$ there, and omitting the so-called backbone graph
$B^K_r$). For the given graph~$G$, it returns a reduced graph~$R$ of
high minimum degree containing a clique-factor~$R'$, an exceptional
set of vertices~$V_0$ of size at most~$\mathcal{O}(p^{-2})$, and a
partition~$\mathcal{V} = \{ V_i\}_{i \in [|R|]}$ of the remaining
vertices of~$G$. This partition is balanced and satisfies
regularity inheritance conditions. A partition
$\{V_{ij}\}_{i \in [m], j\in [K]}$ is said to be \emph{$K$-equitable}
if $\bigl| |V_{ij}|-|V_{ij'}|\bigr| \leq 1$ for all $i \in [m]$ and
$j,\,j' \in [K]$. 

\begin{lemma}[Lemma for $G$~\cite{sparse_bandwidth}] 
  \label{lem:resil_G}
  For each $\gamma > 0$, integer $K \geq 2$, and $\xi \geq 1 - 1/K$, there exists $d_0 > 0$
  such that for every $\eps \in (0,1/2K)$, there
  exist $r_1\geq 1$ and $C_{\LG}>0$ such that the
  following holds a.a.s. for $\Gamma \sim G(n,p)$ with
  $p \geq C_{\LG} (\log n/n)^{1/2}$.

  Let $G=(V,E)$ be a spanning subgraph of $\Gamma$ with
  $\delta(G) \geq (\xi + \gamma)pn$. Then there exists
  an integer $m$ with $r = mK \leq r_1$, a subset $V_0 \subseteq V$
  with $|V_0| \leq C_{\LG}p^{-2}$, a $K$-equitable vertex partition $\mathcal{V} = \{V_{ij}\}_{i\in[m],j\in[K]}$ of $V(G)\setminus V_0$,
  and an $r$-vertex graph $R$ on the vertex set $[m] \times [K]$ with
  $\delta(R) \geq (\xi + \gamma/2)r$ and
  containing a $K$-clique factor $R'$ (whose cliques have vertex set~$\{i\} \times [K]$ for all $i \in [m]$), such that the following hold for all $d \leq d_0$.
  \begin{enumerate}[label=\itmarab{G}]
  \item \label{itm:RG1} For every $i\in[m]$ and $j\in[K]$, we have $n/4r\leq (1- 2\eps)n/r\leq
    |V_{ij}| \leq4n/r$,
  \item \label{itm:RG2} $(G,\mathcal{V})$ is
    $(\eps,d,p)$-lower-regular on $R$ and $(\eps,d,p)$-super-regular
    on $R'$,
    
  \item \label{itm:RG3} the pairs $\bigl(N_\Gamma(v; V_{ij}),V_{i'j'}\bigr)$ and
    $\bigl(N_\Gamma(v; V_{ij}),N_\Gamma(v; V_{i'j'})\bigr)$ are
    $(\eps,d,p)$-lower-regular pairs in~$G$ for every
    $\{(ij),(i'j')\} \in E(R)$ and $v\in V\setminus V_0$, and
    
  \item \label{itm:RG4} $|N_\Gamma(v;V_{ij})| = (1 \pm
    \eps)p|V_{ij}|$ for every $i \in [m]$, $j\in [K]$ and, $v
    \in V \setminus V_0$.
    
  \end{enumerate}
\end{lemma}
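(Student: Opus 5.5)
The plan is to derive the statement from the sparse regularity machinery of~\cite{sparse_bandwidth}, following the proof of \cite[Lemma~26]{sparse_bandwidth} with $r_0=1$ and with no backbone graph. The argument has three steps.

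\textbf{Step 1: an initial regular partition.} Since $p\ge C(\log n/n)^{1/2}$, a.a.s.\ $\Gamma$ has all the standard properties we need. These include degree concentration, Lemma~\ref{lem:rand_G_property}, and the sparse regularity inheritance lemmas, which say that for a lower-regular pair $(X,Y)$ in $G\subseteq\Gamma$, all but $O(p^{-1}\log n)$, or in the two-sided case $O(p^{-2})$, vertices $v$ have $(N_\Gamma(v;X),Y)$ and $(N_\Gamma(v;X),N_\Gamma(v;Y))$ lower-regular with slightly worse parameters.

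Given $G$, I apply the sparse regularity lemma with respect to $p$, using parameter $\eps'\ll\eps$ and $r_0$ large. This gives an equitable partition $U_1,\dots,U_{r'}$. Form the reduced graph $R_0$, whose edges are the $(\eps',d,p)$-lower-regular pairs of $p$-density at least $d$. The minimum degree of $G$ and the concentration of degrees in $\Gamma$ (every vertex has $(1\pm o(1))p|U_i|$ $\Gamma$-neighbours in most clusters) show that all but $\sqrt{\eps'}r'$ clusters have degree at least $(\xi+3\gamma/4)r'$ in $R_0$. After moving the bad clusters into a garbage set, we obtain $\delta(R_0)\ge(\xi+2\gamma/3)r'$.

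\textbf{Step 2: the $K$-clique factor.} Since $\xi\ge 1-1/K$, the Hajnal--Szemer\'edi theorem applied to the complement of $R_0$ gives an almost-perfect $K_K$-factor in $R_0$. Because $\delta(\bar R_0)<r'/K$, the complement has an equitable colouring into $K$ classes, and transversals of it are cliques. Pairing the parts up yields the factor $R'$ on $m$ cliques indexed by $\{i\}\times[K]$.

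Inside each clique I make the pairs super-regular by the usual deletion: remove from $U_{ij}$ the vertices with too few $G$-neighbours into another cluster of the same clique. The bound involving $\tfrac12\deg_\Gamma(u;B)$ also needs care; this is where I use that $\Gamma$-degrees are concentrated for all but few vertices. The deleted vertices go to $V_0$. Lemma~\ref{lem:sparse_reg_robust} shows that regularity survives these small changes, with $\eps$ in place of $\eps'$.

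\textbf{Step 3: redistribution and the exceptional set.} The garbage vertices must be redistributed back into clusters while keeping the minimum degree of $R$ and the cluster sizes in \ref{itm:RG1}. I send each such vertex $v$ to a cluster $V_{ij}$ in which it has the right $\Gamma$-degree (property \ref{itm:RG4}) and in whose reduced-graph neighbourhood it is typical. The minimum degree of $G$ guarantees many choices, so the sizes can be balanced and made $K$-equitable.

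The final exceptional set $V_0$ consists of the vertices violating \ref{itm:RG3} or \ref{itm:RG4} for some pair. By the inheritance lemmas and Lemma~\ref{lem:rand_G_property}, there are at most $r_1^2\cdot O(p^{-2})\le C_{\LG}p^{-2}$ of them. Removing them only changes cluster sizes by a negligible amount, so regularity is maintained.

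\textbf{Main obstacle.} I expect the main obstacle to be the two-sided inheritance count in \ref{itm:RG3}. One must show that, for every fixed lower-regular pair, only $O(p^{-2})$ vertices fail it, uniformly over all subgraphs $G$ of $\Gamma$. This needs the a.a.s.\ counting of pairs in $\Gamma$ under $p\gg(\log n/n)^{1/2}$, which I would cite from~\cite{sparse_bandwidth}. A second difficulty is the circularity between removing bad vertices and preserving super-regularity. I would handle it by iterating the cleanup once, using Lemma~\ref{lem:sparse_reg_robust} at each stage.
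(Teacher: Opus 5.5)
Your route is the one underlying \cite[Lemma~26]{sparse_bandwidth}, but the paper does not re-run that argument. It invokes Lemma~26 (with $r_0=1$, backbone omitted) and inspects its proof at only three points. (i) Everything is built with $d_0$; since lower-regularity and the super-regular degree condition are monotone in $d$, \ref{itm:RG2} and \ref{itm:RG3} then hold for every $d\le d_0$ with one and the same partition. Your Step~1 fixes a single $d$, so this needs saying. (ii) The lower bound $|V_{ij}|\ge(1-2\eps)n/r$ in \ref{itm:RG1}, which is not in Lemma~26, is read off from its proof. The starting equitable partition $\{U_{ij}\}$ of $V\setminus U_0$ has $|U_0|\le\eps^*n$, and the redistribution gives $|U_{ij}\symmdiff V_{ij}|\le 2000K^2\eps^*\gamma^{-1}|U_{ij}|\le\eps|U_{ij}|$. (iii) The input $(K-1)/K+\gamma$ to the minimum-degree regularity lemma is replaced by $\xi+\gamma$; the rest of the proof only uses $\xi\ge 1-1/K$. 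Your Step~1 handles (iii). But (ii), the only genuinely new content of the statement, is never verified, and ``the sizes can be balanced'' does not yield it.

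Read as a self-contained proof, the sketch also has a concrete error. In Step~2 you put the vertices violating the super-regular degree condition into $V_0$. Lower-regularity only bounds these by $\eps'|U_{ij}|$ per pair, so there can be linearly many, whereas $|V_0|\le C_{\LG}p^{-2}\le n/(C_{\LG}\log n)=o(n)$. Step~3 then contradicts Step~2 by declaring $V_0$ to be the \ref{itm:RG3}/\ref{itm:RG4}-violators. These vertices must instead be redistributed. The rule must then place each such $v$ in a clique $\{i\}\times[K]$ to all of whose clusters $v$ has $G$-degree at least about $2d_0p|V_{ij}|$; otherwise super-regularity on $R'$ fails at $v$. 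Your criterion (correct $\Gamma$-degree, ``typical in the reduced-graph neighbourhood'') does not give this.

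The missing input is the count in the proof of Claim~\ref{clm:CNL34}. Because $\xi\ge1-1/K$, every vertex with concentrated $\Gamma$-degrees has such a good clique, and in fact a proportion depending on $\gamma$ of the $m$ cliques are good. This abundance, via a Hall-type assignment, is what makes an estimate like the one quoted above possible, and hence both size bounds in \ref{itm:RG1}.

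Two smaller slips. First, Hajnal--Szemer\'edi should give an equitable colouring of $\bar R_0$ with $|R_0|/K$ classes of size $K$, each a $K$-clique of $R_0$ since $\Delta(\bar R_0)<|R_0|/K$ after discarding fewer than $K$ clusters. It does not give $K$ classes whose transversals are cliques, which is false in general. Second, the one-sided exceptional counts must be $O(p^{-1}\log(en/|X|))=O(p^{-1})$ for linear-sized $X$. The bound $O(p^{-1}\log n)$ you wrote is not $O(p^{-2})$ once $p\gg 1/\log n$.
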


\begin{remark}
    We remark that Lemma~\ref{lem:resil_G} has been slightly adapted from \cite[Lemma 26]{sparse_bandwidth} in accordance with our requirements for the proof of Theorem~\ref{thm:resil_main}. Please refer to Appendix~\ref{app:adaptLemG} for further details.  
\end{remark}

 The partition of~$G$ returned by Lemma~\ref{lem:resil_G} contains a small set of exceptional vertices~$V_0$.
 If~$V_0$ were empty, then the sparse blow-up lemma could be directly
 applied to the partition provided by
 Lemma~\ref{lem:resil_G}. However,~$V_0$ contains, for example, those
 vertices~$v \in V(\Gamma)$ for which~$N_\Gamma(v)$ does not satisfy
 the required regularity inheritance conditions,
 and such vertices may exist.
 Hence, a significant effort in the proof of Theorem~\ref{thm:resil_main} is spent on handling~$V_0$.
 As mentioned earlier, this is achieved by pre-embedding some vertices~$X_0$ of~$H$
 onto~$V_0$. Any such pre-embedding of vertices will generate image restrictions for the remaining unembedded vertices, and so, $X_0$ needs to be chosen such that the generated \emph{image restrictions} $\{I_x\}$ and the \emph{restricting vertices} $\{J_x\}$ form a \emph{$(\rho, \zeta, \Delta, \Delta_J)$-restriction pair} as defined in Definition~\ref{defn:image_restrict}. 

To satisfy conditions~\ref{itm:IR5} and~\ref{itm:IR6} of Definition~\ref{defn:image_restrict}, we need the neighbourhood $N^*_\Gamma(J_x; V_i)$ to satisfy some good regularity properties, and as discussed above, this cannot always be met if~$J_x \subseteq V_0$. Thus, we shall find it useful to pre-embed both the vertices~$X_0$ and their neighbours~$N_H(X_0)$, while ensuring that $N_H(X_0)$ is embedded such that the regularity conditions required for the resulting image restrictions are satisfied. Here, choosing~$X_0$ from those vertices of~$H$ not contained in triangles is useful, as it adds no additional constraints on the potential images of $N_H(X_0)$ in $V(G)\setminus V_0$. 

We shall use~\cite[Lemma 28]{sparse_bandwidth} (Lemma~\ref{lem:resil_common_nbhd} below) to obtain the desired embedding of $N_H(X_0)$ as follows. The vertices of~$N_H(X_0)$ will be embedded into a subset $S \subseteq V(G)$ of linear size, where~$S$ is chosen such that every vertex of~$\Gamma$ has roughly the expected number of neighbours in~$S$.   
Lemma~\ref{lem:resil_common_nbhd} will then be sequentially applied to each vertex~$x \in X_0$, along with a large set~$W \subseteq S$ and the sets~$\{A_j\}_{j \in [K]}$ that together satisfy \ref{itm:CNL1}--\ref{itm:CNL4} below. Here, $W$ represents the candidate set of vertices for embedding the vertices of $N_H(x)$, and the sets~$\{A_j\}_{j \in [K]}$ denote large subsets of the clusters in $G$ that correspond to a particular clique of the clique-factor~$R'$. The lemma then returns, for each $x \in X_0 \subseteq V(H)$, a $\Delta$-tuple of vertices in~$W$, into which the at most $\Delta$ neighbours of~$x$ will be pre-embedded. This $\Delta$-tuple satisfies the properties~\ref{itm:W1}--\ref{itm:W4}, which will imply the image restriction requirements of~\ref{itm:SBL3} for the unembedded graph. Lemma~\ref{lem:resil_common_nbhd} is stated as follows. Note that formally it returns a set of $\Delta$ vertices in $W$ (the order is immaterial for the conclusion of the lemma) but it is convenient to put an arbitrary order on the set, which we do and refer to it as a tuple.

\begin{lemma}[Common neighbourhood lemma~\cite{sparse_bandwidth}]
  \label{lem:resil_common_nbhd}
  For each $d>0$, $K \geq 2$, and $\Delta \geq 2$, there exists
  $\zeta >0$ such that for every $\eps^\ast \in (0,1)$, there exists
  $\eps_0 >0$ such that for every $m\geq 1$ and every
  $0<\eps\le\eps_0$, there exists $C_{\CNL} >0$ for which the following
  holds. If $p \geq C_{\CNL} (\log n/n)^{1/\Delta}$, then
  $\Gamma \sim G(n,p)$ a.a.s.~satisfies the following.  Let $G=(V,E)$ be
  a not necessarily spanning subgraph of~$\Gamma$.  Let
  $\{A_j\}_{j\in[K]}$ and $W$ be pairwise disjoint subsets of $V$ such
  that for each $j,\,j'\in [K]$, we have
  \begin{enumerate}[label=\itmarab{CNL}]
  \item\label{itm:CNL1} $n/4mK\le |A_j|\le{4n}/{mK}$,
  \item\label{itm:CNL2} $(A_j,A_{j'})$ is $(\eps, d, p)$-lower-regular
    in~$G$, 
  \item\label{itm:CNL3} $|W|\ge 10^{-10}\,(\eps /mK)^4\,pn$, and
  \item\label{itm:CNL4} $|N_{G}(w;A_j)| \geq dp|A_j|$ for every $w \in W$. 
  \end{enumerate}
  Then there exists a tuple $(w_1, \dots, w_\Delta) \in \binom{W}{\Delta}$
  such that for each $\Lambda, \Lambda^\ast\subseteq[\Delta]$ and
  for each $j, j' \in [K]$ with $j \neq j'$, we have
  \begin{enumerate}[label=\itmarab{W}]
  \item\label{itm:W1}
    $\bigl|\bigcap_{i\in \Lambda} N_{G}(w_i;A_j)\bigr|\geq \zeta\,
    p^{|\Lambda|}|A_j|$,
  \item\label{itm:W2} $\bigl|\bigcap_{i\in \Lambda} N_{\Gamma}(w_i)\bigr| \le (1
    + \eps^\ast)p^{|\Lambda|}n$,
  \item\label{itm:W3} $\bigl|\bigcap_{i\in \Lambda} N_{\Gamma}(w_i;A_j)\bigr| =
    (1 \pm \eps^\ast)p^{|\Lambda|}|A_j|$, and 
  \item\label{itm:W4}
    the pair $\bigl(\,\bigcap_{i\in \Lambda}N_\Gamma(w_i;A_j),\bigcap_{i^\ast\in
      \Lambda^\ast}N_\Gamma(w_{i^\ast};A_{j'})\bigr)$ is
    $(\eps^\ast,d,p)$-lower-reg\-ular in~$G$ if we have $|\Lambda|,|\Lambda^\ast| < \Delta$ and additionally, if
    either $\Lambda\cap\Lambda^\ast=\emptyset$ or $\Delta\geq 3$ or both.
  \end{enumerate} 
\end{lemma}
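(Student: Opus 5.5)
The natural approach is probabilistic: choose $(w_1,\dots,w_\Delta)$ essentially uniformly among suitable tuples from $W$, and show that each of \ref{itm:W1}--\ref{itm:W4} fails for only a vanishing proportion of tuples. Since $\binom{[\Delta]}{\le\Delta}$ and $[K]^2$ are bounded, a union bound over all $\Lambda,\Lambda^\ast,j,j'$ then finishes. The key input is that, in $\Gamma\sim G(n,p)$, a.a.s.\ all sets of size $\Omega(p^{|\Lambda|}n)$ with $|\Lambda|\le\Delta$ behave like random sets. The hypothesis $p\ge C(\log n/n)^{1/\Delta}$ guarantees $p^\Delta n\ge C^\Delta\log n$, which is exactly what makes this concentration work down to $\Delta$-fold common neighbourhoods.

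\textbf{Step 1: the $\Gamma$-properties \ref{itm:W2} and \ref{itm:W3}.} I would build the tuple greedily, choosing $w_1,\dots,w_\Delta$ one at a time. At step $k$, for every $\Lambda\subseteq[k-1]$, set $U_\Lambda=\bigcap_{i\in\Lambda}N_\Gamma(w_i)$ (intersected with $A_j$ or not). By induction each $U_\Lambda$ has size $(1\pm\eps')p^{|\Lambda|}n$, respectively $(1\pm\eps')p^{|\Lambda|}|A_j|$, and this is at least $C_{\RG}p^{-1}\log n$ as long as $|\Lambda|\le\Delta-1$. Lemma~\ref{lem:rand_G_property} then shows that at most $O(p^{-1}\log n)$ vertices $w$ have a bad degree into some $U_\Lambda$. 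Since $|W|\ge 10^{-10}(\eps/mK)^4pn\gg p^{-1}\log n$ (using $p^2n\gg\log n$), almost all of $W$ is admissible. This propagates the induction with error $\eps'$ growing by a bounded factor per step, which is fine once $\eps_0$ is small relative to $\eps^\ast$.

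\textbf{Step 2: the $G$-properties \ref{itm:W1} and \ref{itm:W4}.} These are harder, because $G$ is adversarial. For \ref{itm:W1}, I would use \ref{itm:CNL4} together with lower-regularity inheritance. Given the current set $T=\bigcap_{i\in\Lambda}N_G(w_i;A_j)$ of size at least $\zeta_k p^{|\Lambda|}|A_j|$, I need many $w\in W$ with $|N_G(w;T)|\ge \zeta_{k+1}p|T|$. For this I would use a sparse inheritance lemma for $G(n,p)$, in the spirit of the one-sided inheritance results underlying Lemma~\ref{lem:rg_blowup}. Such a lemma says that, for a lower-regular pair $(A_j,A_{j'})$ and any set $Y$, all but $O(p^{-2}\log n)$-type few vertices $w$ have $(N_\Gamma(w;A_j),A_{j'})$ and $(N_\Gamma(w;A_j),N_\Gamma(w;A_{j'}))$ lower-regular, and these properties are iterated on already-restricted sets. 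The counting of $G$-edges between $W$ and $T$ then comes from the minimum-degree condition \ref{itm:CNL4}. Since $T$ is not a fixed cluster, I would first pass to the statement that \emph{most} $w\in W$ see a $dp$-fraction of $T$, by a double count: the $G$-edges from $W$ to $A_j$ are plentiful, and $\Gamma$ has roughly $p|W||T|$ edges to $T$, so regularity of the bipartite graph $\Gamma[W,A_j]$ prevents the $G$-neighbourhoods from avoiding $T$. The constant $\zeta$ is allowed to decay like $(d/4)^{\Delta}$.

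\textbf{Step 3: the main obstacle, \ref{itm:W4}.} I expect the main difficulty to be \ref{itm:W4}, which requires regularity of pairs of $|\Lambda|$-fold and $|\Lambda^\ast|$-fold common neighbourhoods. The restrictions to $|\Lambda|,|\Lambda^\ast|<\Delta$ and to either $\Lambda\cap\Lambda^\ast=\emptyset$ or $\Delta\ge3$ indicate exactly where the available sparse inheritance lemmas apply. The requirement is that the resulting sets have size at least about $p^{\Delta-1}n$, which is still $\gg p^{-1}\log n$, and that the number of sets intersected in total keeps $p^{|\Lambda|+|\Lambda^\ast|}$-type counts above the threshold. My plan is to add $w_k$ greedily: at each step, apply the one-sided and two-sided inheritance lemma to every currently regular pair $(\bigcap_\Lambda, \bigcap_{\Lambda^\ast})$. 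These lemmas have exceptional sets of size $O(p^{-2}\log n)$ or $O(p^{-3}\log n)$ depending on the two-sidedness, and both are $o(|W|)$ under the density hypothesis. The error parameter degrades from $\eps$ through at most $2\Delta$ rounds, and $\eps_0$ is chosen so that the final error is at most $\eps^\ast$. Finally, distinctness of the $w_i$ is trivial since we discard only $o(|W|)$ vertices at each step.
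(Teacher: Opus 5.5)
\paragraph{Verdict: genuine gap at \ref{itm:W1}.} The paper gives no proof of this lemma; it is quoted from \cite[Lemma~28]{sparse_bandwidth}, so your argument has to stand on its own.

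\paragraph{Steps~1 and~3.} Step~1 (greedy choice, discarding the $O(p^{-1}\log n)$ atypical vertices given by Lemma~\ref{lem:rand_G_property}) is fine for \ref{itm:W2} and \ref{itm:W3}. Step~3 uses the right mechanism for \ref{itm:W4}: one-sided inheritance when $w_k$ enters only one of $\Lambda,\Lambda^\ast$, and two-sided inheritance when it enters both. Your accounting there is off, however. The inheritance lemmas of \cite{sparse_blowup} have exceptional sets of size $O(p^{-1}\log n)$ and $O(\max\{p^{-2},p^{-1}\log n\})$. With the sizes you state, $O(p^{-2}\log n)$ is not $o(|W|)$ for $\Delta=2$, and $O(p^{-3}\log n)$ is not $o(|W|)$ for $\Delta\le 3$. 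With the correct sizes, the $\Delta=2$ restriction is explained by comparing $p^{-2}$ with $|W|=\Theta(pn)$, i.e.\ by whether $p^3n\to\infty$.

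\paragraph{The gap in Step~2.} You claim that most $w\in W$ satisfy $|N_G(w;T)|\ge\zeta' p|T|$ for $T=\bigcap_{i\in\Lambda}N_G(w_i;A_j)$. This is false, and neither ingredient you cite could give it:
\begin{itemize}
\item regularity of $\Gamma[W,A_j]$ says nothing about the adversarial subgraph $G$;
\item the inheritance lemmas applied to $(A_j,A_{j'})$ concern $G$-edges inside $\bigcup_j A_j$, not $G$-edges between $W$ and $A_j$;
\item \ref{itm:CNL4} only controls $G$-degrees into all of $A_j$, while $T$ is a $p^{|\Lambda|}$-fraction of $A_j$.
\end{itemize}

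Here is a concrete obstruction. Split $W=W_a\sqcup W_b$ with $|W_a|=\beta|W|$, and split each $A_j=A_j^a\sqcup A_j^b$ into halves. Let $G[W,A_j]$ consist of the $\Gamma$-edges between $W_a$ and $A_j^a$ and between $W_b$ and $A_j^b$, and keep $G[A_j,A_{j'}]=\Gamma[A_j,A_{j'}]$. All hypotheses hold for any $d<1/2$. Yet if $w_1\in W_a$, then $T=N_G(w_1;A_j)\subseteq A_j^a$, and only the $\beta$-fraction $W_a$ of $W$ has any $G$-neighbour in $T$. So:
\begin{itemize}
\item the good extensions for a single constraint can form a fraction of $W$ not bounded below in terms of $d$;
\item a careless early choice cannot be repaired;
\item your union bound over the $2^\Delta K$ pairs $(\Lambda,j)$ has nothing to work with;
\item even a positive fraction per constraint would not yield a common good vertex.
\end{itemize}

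\paragraph{What is missing.} You need a step that regularises $G[W,A_j]$ itself before the greedy choice. One way to do this:
\begin{itemize}
\item Apply a bipartite, unbalanced sparse regularity lemma to $G[W,\bigcup_j A_j]$. The required upper-uniformity of $\Gamma$ holds since $|W|=\Omega(pn)$ and $p^2n\gg\log n$.
\item This gives $W'\subseteq W$ with $|W'|=\Omega(|W|)$ and $A_j'\subseteq A_j$ of linear size such that $(W',A_j')$ is $(\eps',d/2,p)$-lower-regular for every $j$ simultaneously. Here $\eps'$ depends only on $d,K,\Delta$.
\item Choose all $w_i$ from $W'$. Use one-sided inheritance on $(A_j',W')$, and then on $\bigl(\bigcap_{i\in\Lambda}N_\Gamma(w_i;A_j'),W'\bigr)$, to keep these pairs lower-regular in $G$.
\item Inductively, $\bigcap_{i\in\Lambda}N_G(w_i;A_j')$ is at least a $\zeta_k/2$-fraction of its $\Gamma$-counterpart. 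Hence all but an $O(\eps'/\zeta_k)$-fraction of $W'$ have the required $G$-degree into it.
\end{itemize}
Now ``most'' is true and the union bound over all $(\Lambda,j)$ goes through. The resulting $\zeta$ depends only on $d,K,\Delta$, as the statement requires.
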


After pre-embedding the vertices of~$X_0$ and their neighbours, let~$H'$ and~$G'$ denote the subgraphs induced by the unembedded vertices of~$H$ and~$G$, respectively. In order to obtain a copy of~$H$ in~$G$, it suffices to show that the graph~$H'$ can be embedded into~$G'$ in accordance with the image restrictions imposed by the pre-embedding. This will be done by an application of the sparse blow-up lemma to the graphs~$G'$ and~$H'$.
To satisfy the requirement~\ref{itm:SBL2} of Lemma~\ref{lem:rg_blowup}, we use Lemma~\ref{lem:sparse_reg_robust} to show that the regularity and inheritance properties of~$G$, obtained via Lemma~\ref{lem:resil_G}, are inherited by the graph~$G'$ as well.
Further, our application of Lemma~\ref{lem:resil_common_nbhd} when embedding $X_0\cup N_H(X_0)$ is tailored towards ensuring that the conditions for image-restricted vertices are satisfied, which will imply condition~\ref{itm:SBL3}. 

Hence, it only remains to construct the required partition of~$H'$ and the buffer vertices in accordance with condition~\ref{itm:SBL1}. 
For this we use the following lemma proved in our companion paper on the robustness of the Sauer--Spencer theorem~\cite[Lemma 6.3]{AllBoeKohNev:robust}.
This is the only place in our proof where we use the extension
property of embeddings of graphs with maximum degree at most~$\Delta$
into graphs of minimum relative degree exceeding~$\exth$ (see
Definition~\ref{defn:extension_threshold}).

 \begin{lemma}[Lemma for~$H'$~\cite{AllBoeKohNev:robust}]
    \label{lem:Lemma_H}
    For all $\Delta \geq 2$, $\gamma>0$, and $\kappa > 1$, there
    exists a constant $\alpha > 0$, such that for all
    $K \geq \Delta+1$ the following holds.  
    
    Given an $n'$-vertex graph~$G'$, let~$R$ be an $r$-vertex graph with
    ${\delta(R) \geq (\exth+ \gamma)r}$ containing a $K$-clique
    factor~$R'$.  Let $\mathcal{V} = \{V_i\}_{i \in [r]}$ be a
    partition of~$V(G')$ with parts of size
      $(1 - \gamma/2)\,n'\!/{r} \leq |V_i| \leq
      \kappa\, n'\!/r$
    for all~$i \in [r]$.  Let~$H'$ be an $n'$-vertex graph with $\Delta(H') \leq \Delta$.
    
    Further, suppose that
    $\mathcal{N} = \{N_i\}_{i \in [r]}$ is a collection of
    pairwise disjoint subsets of~$V(H')$ of size
    $|N_i| \leq \alpha |V_i|$ and such that $(H'[N], \mathcal{N}\,)$
    is an $R$-partition, where
    $N \coloneqq \bigcup_{i \in [r]} N_i$.  Then there exists a
    partition $\mathcal{X} = \{X_{i}\}_{i \in [r]}$ of $V(H')$, and
    subsets $\tilde{X}_i \subseteq X_i$ with the following properties. 
    \begin{enumerate}[label=\itmarab{H}]
    \item \label{itm:H1} The partitions $(G',\mathcal{V}\,)$ and $(H',\mathcal{X}\,)$ are size-compatible,
    \item \label{itm:H2} the partition $(H', \mathcal{X})$ is an $R$-partition, 
    \item \label{itm:H3} the family of subsets~$\tilde{\mathcal{X}} = \bigl\{\tilde{X}_i\bigr\}_{i \in [r]}$ is an $(\alpha, R')$-buffer for $(H', \mathcal{X})$, and
    \item \label{itm:H4} for all $i \in [r]$, we have $N_i \subseteq X_i$ and $N_i \cap \tilde{X}_i = \emptyset$.
    \end{enumerate}
\end{lemma}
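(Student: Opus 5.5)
This is Lemma 6.3 of the companion paper, so here I sketch how I would prove it from the definition of $\exth$.

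\emph{Step 1: coarse partition.} Apply Theorem~\ref{thm:Hajnal-Szemeredi} to a bounded power of $H'$, say $H'^{\,4}$, which has maximum degree at most $\Delta^4$. This partitions $V(H')$ into boundedly many classes, each an independent set in $H'^{\,4}$, so any two vertices in the same class are at distance at least $5$ in $H'$. From one class I select the buffer candidates as a set $B$ of linear size $\Theta(n')$, chosen so that the closed second neighbourhoods $N^2_{H'}[b]$ for $b\in B$ are pairwise disjoint and avoid $N$. This is possible because $|N|\le\alpha n'$ is small relative to the size of a class.

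\emph{Step 2: the homomorphism.} I would build a map $f\colon V(H')\to V(R)$ that is a graph homomorphism from $H'$ to $R$. This map should
\begin{itemize}
\item send each $N_i$ to $i$;
\item send each buffer ball $N^2_{H'}[b]$ into the single clique of $R'$, via a proper colouring inside the $K$-clique; this is possible since $K\ge\Delta+1$ and the ball has degree at most $\Delta$, so its edges go along $R'$;
\item have preimage sizes $|f^{-1}(i)|$ very close to $|V_i|$.
\end{itemize}
To build $f$ I would use the extension property of $\exth$ on a blown-up graph. Blow up $R$ into a graph $G^*$ on $n'$ vertices, replacing each vertex $i$ by an independent set of size $|V_i|$ and each edge by a complete bipartite graph. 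Then $\delta(G^*)\ge(\exth+\gamma/2)n'$, using $\delta(R)\ge(\exth+\gamma)r$ and the near-equal sizes $|V_i|\ge(1-\gamma/2)n'/r$. Size mismatches from $\kappa$ are handled by adjusting, or I accept approximate compatibility and fix it afterwards.

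Pre-embed $S=N\cup\bigcup_{b\in B'}N^2_{H'}[b]$ into $G^*$ as follows. Embed $N_i$ into the blob of $i$; this is legal because $(H'[N],\mathcal N)$ is an $R$-partition and the $N_i$ are small. Embed each buffer ball into the blobs of one $R'$-clique, colouring properly. Here $B'\subseteq B$ is chosen with $|S|\le\eta n'$ and with at least $\alpha|X_i|$ buffer vertices landing in each cluster $i$. To ensure this, spread the buffer balls over all cliques of $R'$ and all $K$ positions, choosing $\alpha\ll\eta/(\Delta^2 K)$. The extension property (Definition~\ref{defn:extension_threshold}) then yields an embedding $\varphi$ of $H'$ into $G^*$ extending this pre-embedding. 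Because $G^*$ has exactly $n'$ vertices, $\varphi$ is a bijection. Setting $X_i=\varphi^{-1}(\text{blob }i)$ gives size-compatibility \ref{itm:H1} exactly, and the $R$-partition property \ref{itm:H2}, since edges of $G^*$ exist only between blobs adjacent in $R$.

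\emph{Step 3: buffers.} Take $\tilde X_i$ to be the buffer centres mapped into blob $i$. Their first and second neighbourhoods lie inside the pre-embedded balls, which go along $R'$, giving \ref{itm:H3}. Since the buffer balls avoid $N$ and $N_i$ was placed in blob $i$, condition \ref{itm:H4} holds.

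\emph{Main obstacle.} The hardest step is the quantifier bookkeeping. The extension property gives $\eta$ depending only on $\Delta$ and $\gamma$, whereas the degree loss from blob-size imbalance (the factor $\kappa$) must not destroy the $\gamma$ margin. The clean way through is to define $G^*$ directly with blob sizes $|V_i|$, and to check that its minimum degree is still at least $(\exth+\gamma/2)n'$. That check needs care when the sizes are only bounded above by $\kappa n'/r$, and is where I would concentrate the effort.
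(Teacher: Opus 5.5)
The paper does not prove this lemma. It imports it from the companion paper, remarking only that this is the one place the extension property of $\exth$ is used, so I can only assess your argument on its own terms.

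Your strategy is the natural one and is consistent with that remark. You blow $R$ up into $G^*$ with blob $i$ of size exactly $|V_i|$. You pre-embed $N_i$ into blob $i$, and each properly coloured radius-$2$ ball around a buffer centre into a single $R'$-clique. You then extend via Definition~\ref{defn:extension_threshold} and let $X_i$ be the preimage of blob $i$. Once the pre-embedding is legal, this yields \ref{itm:H1}, \ref{itm:H2}, \ref{itm:H4} and the neighbourhood part of \ref{itm:H3}. However, two steps fail as written.

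\textbf{The pre-embedding must embed the induced graph.} The extension property requires $\varphi_S$ to embed the \emph{induced} graph $H'[S]$. Centres at pairwise distance at least $5$ only make the balls disjoint. At distance exactly $5$, a second neighbour of $b$ may be adjacent to a second neighbour of $b'$. If the two balls sit in different cliques of $R'$, that edge is sent to a pair of blobs that need not be adjacent in $R$. The same problem arises for an edge from a ball to $N$, which ``the balls avoid $N$'' does not exclude. You need centres at pairwise distance at least $6$ (an independent set in $H'^{\,5}$) and at distance at least $4$ from $N$.

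\textbf{The choice of $\alpha$ and blob capacity.} $\alpha$ is quantified before $K$, so $\alpha\ll\eta/(\Delta^2K)$ is not allowed. In fact the bound $|S|\le|N|+(1+\Delta^2)\sum_i\lceil\alpha|V_i|\rceil$ involves no $K$ at all. What does need care, and what you never check, is that $\varphi_S$ is injective: blob $i$ may receive at most $|V_i|$ pre-embedded vertices. If the balls centred at the $K$ positions of a clique $Q$ all place neighbours of their centres on the same blob, that blob's load grows linearly in $K$, of order $K\alpha n'/r$. For large $K$ this exceeds $|V_i|$. You must spread the colour classes over the positions of $Q$, for instance by rotating the colour-to-position assignment from ball to ball. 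After that, an upper bound on $\alpha$ depending only on $\Delta$, $\kappa$ and $\eta=\eta(\Delta,\gamma/2)$ suffices. This capacity check is where $\kappa$ genuinely enters.

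\textbf{The obstacle you single out is not one.} A vertex of blob $i$ has degree $\sum_{j\in N_R(i)}|V_j|\ge(\exth+\gamma)(1-\gamma/2)n'\ge(\exth+\gamma/2)n'$. This uses only the lower bound on the $|V_j|$ together with $\exth+\gamma\le 1$, which holds since otherwise no such $R$ exists. The upper bound $\kappa n'/r$ plays no role here, and there is no size mismatch to ``adjust'', since $G^*$ has exactly $n'$ vertices.

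\textbf{State that $n'$ is large.} The extension property holds only for sufficiently large $n'$, and the rounding in $\lceil\alpha|V_i|\rceil$ needs $n'/r$ large. The printed statement leaves this implicit, and read literally it fails when all $|V_i|=1$. Then every vertex must be a buffer vertex, so a connected $H'$ on $n'=r>K$ vertices would have to lie inside a single $R'$-clique, which holds only $K$ vertices. This is harmless in the application, where $r\le r_1$ and $n'\ge(1-\eps^2)n$.
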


We remark that the small $R$-partition $\mathcal{N}$ in the lemma
above will be useful to deal with image restrictions and ensure that
the chosen buffer vertices are neither image-restricted nor adjacent to image-restricted vertices. This completes the sketch of our proof of Theorem~\ref{thm:resil_main}. 

\section{Proof of Theorem~\ref{thm:resil_main}}
\label{sec:resil_proof}

In this section we prove Theorem~\ref{thm:resil_main} by applying the blow-up lemma (Lemma~\ref{lem:rg_blowup}), the lemma for~$G$ (Lemma~\ref{lem:resil_G}), the common neighbourhood lemma (Lemma~\ref{lem:resil_common_nbhd}), and the lemma for~$H$ (Lemma~\ref{lem:Lemma_H}).

\begin{proof}
We start by fixing some constants. Let $\Delta \geq 2$ and $\gamma > 0$ be given. We may assume $\gamma < 1$. Set~$\xi = \exth$ and~$K = \Delta + 1$. Note that the example witnessing the tightness of the Bollob\'{a}s--Eldridge--Catlin Conjecture implies that $\exth \geq 1 - 1/K = \Delta/(\Delta+1)$. Let~$d_0 > 0$ be returned by Lemma~\ref{lem:resil_G} with input~$\gamma$,~$K$, and~$\xi$, and set $d = \min\{ d_0, \gamma/ 20\}$. Then, with input~$d$,~$\Delta$, and~$K$, Lemma~\ref{lem:resil_common_nbhd} returns a constant~$\zeta > 0$. For~$\Delta$,~$\gamma/2$, and~$\kappa = 16$, Lemma~\ref{lem:Lemma_H} returns a constant~$\alpha > 0$. With input~$\Delta$, $d$, $\kappa$, $\alpha$, $\zeta/4$, and with~$\Delta_{R'}= K$ and~$\Delta_J = \Delta$, Lemma~\ref{lem:rg_blowup} returns constants~$\eps_{\BL} >0$ and~$\rho > 0$. We next pass~$\eps^* = 0.1\,\eps_{\BL}$ to Lemma~\ref{lem:resil_common_nbhd}, which returns a constant~$\eps_0$. We set
\[\eps \coloneqq \frac{\alpha\,\gamma\,\rho\,\zeta\, \eps^*\, \eps_0\, \eps_{\BL}^2\sqrt{d}}{200\, K\, \Delta^4}\,.\]
For this choice of~$\eps$, Lemma~\ref{lem:resil_G} now returns constants $r_1 \in \mathbb{N}$ and $C_{\LG}> 0$. 
Next, for every integer $m\le r_1/K$,
let $C_{\CNL}^{(m)} > 0$ be returned by Lemma~\ref{lem:resil_common_nbhd} for input~$m$ and~$5\eps$, and let~$C_{\CNL}$ be the maximum among all these~$C_{\CNL}^{(m)}$.
Let~$C_{\BL}$ be returned by Lemma~\ref{lem:rg_blowup} with input~$r_1$, and let~$C_{\RG}$ be returned by Lemma~\ref{lem:rand_G_property} with input~$\eps$. 
Finally, define the constant $C$, required by Theorem~\ref{thm:resil_main}, by setting
\[C^* \coloneqq \max{\{2, C_{\LG}, C_{\CNL}, C_{\BL}, C_{\RG}\}}\, \text{ and } C \coloneqq 60\, \eps^{-7} \Delta^{10}\,r_1^5\, C^*\,.\] 
\indent Next, let $\Gamma \sim G(n,p)$ with $p \geq C(\log n/n)^{1/\Delta}$ be given and assume that~$\Gamma$ satisfies the conclusions of Lemma~\ref{lem:rand_G_property}, Lemma~\ref{lem:rg_blowup}, Lemma~\ref{lem:resil_G}, and Lemma~\ref{lem:resil_common_nbhd} (for all values $m \in \mathbb{N}$ with $m \leq r_1/K$). 
Moreover, as every vertex of~$V(\Gamma)$ has $(n-1)p$~neighbours in expectation, we further assume that~$\Gamma$ has maximum degree at most~$(1+\eps)np$. The random graph~$\Gamma$ has this property with high probability, as seen by using concentration bounds (for example, \cite[Corollary 2.3]{Random_graphs_textbook_janson}) and the choice of $p$ and $C$.
Hence, we have that asymptotically almost surely, $\Gamma \sim G(n,p)$ satisfies the conclusions of all these lemmas and has $\Delta(\Gamma) \leq (1+\eps)np$. It is useful to note that~$p$ satisfies the following for $\Delta \geq 2$:
\begin{equation}
    np^2 \geq np^\Delta \geq C^\Delta\log n \geq C \log n \quad \text{ and so,} \quad C\, p^{-1}\log n \leq pn.\label{eq:p_ineq}
\end{equation}
\indent Let~$H$ and~$G\subset\Gamma$ be two $n$-vertex graphs which satisfy the assumptions of Theorem~\ref{thm:resil_main}. Our aim is to embed~$H$ into~$G$. We start by constructing a suitable partition of~$G$ by applying Lemma~\ref{lem:resil_G} to~$G$. This returns a reduced graph~$R$ on $r \coloneqq mK$ vertices for some integer $m \leq r_1/K$, 
a $K$-clique factor~$R'$ in~$R$, an exceptional set $V_0$ of size at most~$C_{\LG}\,p^{-2}$ and a $K$-equitable vertex partition $\mathcal{V} = \{V_{ij}\}_{i\in [m], j\in [K]}$ of $V(G) \setminus V_0$ such that \ref{itm:RG1}--\ref{itm:RG4} are satisfied for the constant~$d$ chosen above.

Next, we will construct a set~$X_0$ of vertices of~$H$ which will be
pre-embedded to cover the exceptional set~$V_0$. We will also
pre-embed~$N_H(X_0)$ into~$G$; our goal when embedding these vertices
is that the image restrictions this creates for $N_H\bigl(N_H(X_0)\bigr)$
satisfy the conditions required for the application of the sparse
blow-up lemma. For this, we first choose a set~$S$, with some desirable properties, into which
$N_H(X_0)$ will be pre-embedded.  

\subsubsection*{Choosing an Evenly Scattered Set $S \subseteq V(G)$}

For every~$\ell \in [\Delta]$ and every set of distinct vertices $v_1, v_2, \dots, v_{\ell} \in V(G)$, we use $\mathbf{v}_{\ell}$ to denote the $\ell$-tuple~$(v_1, v_2, \dots, v_{\ell})$. Set $\mu \coloneqq \eps^2/r$, where $r \leq r_1$ is the constant returned by the application of Lemma~\ref{lem:resil_G} above. Then, we find a subset $S \subseteq V(G)$ with the following properties. Formally, we should write $|S|=\lceil\mu n\rceil$ in what follows; we drop the ceiling here and in the rest of the paper, and observe that there is sufficient slack in all our calculations.

\begin{claim}
    \label{clm:S}
    There exists a subset~$S \subseteq V(G)$ of size $|S| = \mu n$
    such that for every $\ell \in [\Delta]$, every $\ell$-tuple of
    vertices $\mathbf{v}_\ell$, and for each graph $ F \in \{G, \Gamma\}$, we have
    \begin{equation} \label{eq:S_prop}
      \bigl|N^*_F(\mathbf{v}_\ell; S)\bigr| = \mu \bigl|N^*_F(\mathbf{v}_\ell)\bigr| \pm \eps \mu \bigl(\,|N^*_F(\mathbf{v}_\ell)| + C \log n\bigr)\,.
    \end{equation}       
\end{claim}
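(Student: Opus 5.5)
We choose $S$ uniformly at random among all subsets of $V(G)$ of size $\mu n$ and show that, with positive probability, \eqref{eq:S_prop} holds simultaneously for all $\ell\in[\Delta]$, all $\ell$-tuples $\mathbf{v}_\ell$, and both $F\in\{G,\Gamma\}$. Fix such a triple $(\ell,\mathbf{v}_\ell,F)$ and write $N\coloneqq N^*_F(\mathbf{v}_\ell)$. Then $X\coloneqq|N^*_F(\mathbf{v}_\ell;S)|$ is hypergeometrically distributed with parameters $(n,|N\setminus\{v_1,\dots,v_\ell\}|,\mu n)$. Since $N$ consists of common neighbours, it contains none of the $v_i$, so $\mathbb{E}(X)=\mu|N|$. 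The total number of events is at most $2\sum_{\ell\le\Delta}n^\ell\le 2\Delta n^\Delta$. It therefore suffices to show that each event fails with probability at most $n^{-\Delta-1}$, and then apply a union bound.

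To get this bound we use Lemma~\ref{lem:chern_hypgeom} with the deviation $t\coloneqq\eps\mu(|N|+C\log n)$, which is exactly the slack in \eqref{eq:S_prop}. Clearly $t\ge\eps\,\mathbb{E}(X)$, so the lemma applies with this $\eps$. It gives
\[
\mathbb{P}\bigl(|X-\mu|N||\ge t\bigr)\le 2\exp\bigl(-\eps^2 t/3\bigr)\le 2\exp\bigl(-\eps^3\mu C\log n/3\bigr).
\]
The additive $C\log n$ term is what makes the estimate uniform: it covers tuples with small (or empty) common neighbourhoods, where multiplicative concentration is impossible.

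It remains to check the constants, namely that $\eps^3\mu C/3\ge\Delta+2$. We have $\mu=\eps^2/r\ge\eps^2/r_1$ and $C=60\,\eps^{-7}\Delta^{10}r_1^5C^*$, so
\[
\eps^3\mu C/3\ \ge\ 20\,\eps^{-2}\Delta^{10}r_1^4C^*,
\]
which is far larger than $\Delta+2$. Each event thus fails with probability at most $2n^{-\Delta-2}$. The union bound over at most $2\Delta n^\Delta$ events gives a total failure probability of $O(\Delta n^{-2})=o(1)<1$, so some $S$ with all the properties exists.

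The only points needing care are routine. Fixing $|S|=\mu n$ (up to ceilings, with $\mu n$ large) makes the hypergeometric form exact, and tuples whose entries lie in $S$ themselves are harmless because common neighbourhoods exclude the $v_i$. The choice of $C$ is the only real constraint, and it is comfortably satisfied.
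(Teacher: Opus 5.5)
Your proposal is correct and follows essentially the same route as the paper. Both choose $S$ as a uniformly random $\mu n$-subset, apply the hypergeometric Chernoff bound with deviation $t=\eps\mu(|N^*_F(\mathbf{v}_\ell)|+C\log n)$ to get a per-event failure probability below $n^{-\Delta-1}$ from the size of $C$, and take a union bound over the $O(n^\Delta)$ common neighbourhoods.
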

\begin{claimproof}
    Let $\mathcal{T}$ denote the collection of common neighbourhoods $N^*_F(\mathbf{v}_\ell)$ for every $\ell \in [\Delta]$, every $\ell$-tuple of vertices $\mathbf{v}_\ell$, and each graph $F \in \{G, \Gamma\}$. Note that we have $|\mathcal{T}| \leq 5n^{\Delta}$. Let $S \subseteq V(G)$ be chosen uniformly at random from the collection of $\mu n$-sized subsets of~$V(G)$. 
    
    For any neighbourhood~$T \in \mathcal{T}$, denoted by~$T = N^*_F(\mathbf{v}_\ell)$, note that~$|N^*_F(\mathbf{v}_\ell; S)| = |T\cap S|$ is hypergeometrically distributed with parameters~$\bigl(|V(G)|, |T|, |S| = \mu n\bigr)$. Hence, we have that $\mathbb{E} \bigl(\,|T \cap S|\,\bigr)=\mu|T|$, and so by Lemma~\ref{lem:chern_hypgeom} and as $C > 30 \Delta / \eps^3 \mu$, the following holds for each neighbourhood $T \in \mathcal{T}$:
    \[\mathbb{P} \Bigl[ |T \cap S| \neq \mu |T| \pm  \eps \mu \bigl(|T| +  C \log n\bigr) \Bigr] \leq 2e^{-\eps^2 \eps \mu C \log n/3} < {2}/{n^{1 + \Delta}}\,.\]
    Taking a union bound over all neighbourhoods $T \in \mathcal{T}$, the probability of failure is at most~$10/n$. Thus, for large enough $n$, there exists a set $S \subseteq V(G)$ that satisfies Claim~\ref{clm:S}.  
\end{claimproof}

Let~$S \subset V(G)$ be a set of vertices having properties asserted by Claim~\ref{clm:S}. When $\ell = 1$, Claim~\ref{clm:S} returns some useful bounds on the size of the neighbourhoods of a vertex, as follows. By the minimum degree condition on $G$, every~$v \in V(G)$ satisfies~$|N_G(v)| \geq pn/2$, and hence using equations~\eqref{eq:p_ineq} and~\eqref{eq:S_prop}, we have 
\begin{equation}
    |N_G(v; S)| \geq (1 - \eps)\mu \cdot pn/2\ - \eps \mu C \log n \geq (0.5 - 2\eps) \mu pn\,. \label{eq:NGvSlb}
\end{equation}
Similarly, as $\Delta(\Gamma) \leq (1+\eps)pn$, and by equation~\eqref{eq:p_ineq} and~\ref{itm:RG1}, the following holds for all $i \in [m]$, $j \in [K]$, and $v \in V(\Gamma)$:
\begin{equation}
\begin{split}
    |N_{\Gamma}(v; S \cap V_{ij})| &\leq |N_{\Gamma}(v; S)| \leq (1 + \eps)\mu \cdot (1 + \eps)pn + \eps \mu C \log n\\ 
    &\leq 10\, \mu pn = 40\,\eps^2p (n/4r) \leq 40\,\eps^2p|V_{ij}| \leq  \eps p |V_{ij}|\,.
\end{split} \label{eq:s_vij}
\end{equation}

\subsubsection*{The Pre-embedding Algorithm}

By fixing the set~$S$ as given by Claim~\ref{clm:S}, we are now ready to carry out the pre-embedding process. Let~$Z \subseteq V(H)$ be the set of vertices of~$H$ that are not in any triangles in~$H$. By the assumptions of Theorem~\ref{thm:resil_main}, we have $|Z| \geq C p^{-2}$. We use the following pre-embedding algorithm that covers the exceptional set~$V_0$ using vertices~$X_0 \subset Z$ and possibly some vertices of $N(X_0)$, and further also embeds~$N_H(X_0)$.

\begin{algorithm}
\caption{Pre-embedding}
\label{alg:pre}
 Set $t \coloneqq 0$ and $\varphi_0 \coloneqq \emptyset$ \;
 \While{$V_0\setminus\im(\varphi_t)\neq\emptyset$}{
  $t \coloneqq t+1$ \;
  Choose $v_{t} = v\in V_0\setminus\im(\varphi_{t-1})$ so that $\bigl|N_G(v; S)\setminus\im(\varphi_{t-1})\bigr|$ is minimised
  \nllabel{line:choosev} \;

  Choose $x_{t}\in Z$ so that $\dist_H\bigl(x_{t}, \{x_1, \dots, x_{t-1}\}\bigr) \geq 8$ 
  \nllabel{line:x} \;
  
  Let $\{y_1,\dots,y_\ell\}=N_H(x_{t})$ \nllabel{line:y} \;
  
  Choose distinct $w_1,\dots,w_{\ell}\in N_G\bigl(v_{t}; S\setminus\im(\varphi_{t-1})\bigr)$ that satisfy \ref{itm:W1}--\ref{itm:W4}
  \nllabel{line:choosenbs} \;
  
  $\varphi_{t}\coloneqq \varphi_{t-1}\cup\{x_{t}\mapsto v_{t}\}\cup\{y_1\mapsto w_1\}\cup\dots\cup\{y_\ell\mapsto w_\ell\}$ \;
 }
\end{algorithm}

Let $\varphi_{t^*}$ denote the partial embedding obtained at the end of the pre-embedding algorithm, and let $X_0 = \{x_1, \dots, x_{t^*}\} \subseteq Z$ be the set of vertices of~$Z$ embedded to~$V_0$ in line~\ref{line:x} of the algorithm. For the pre-embedding algorithm, it remains for us to show that it is always possible to choose a vertex~$x_{t}$ as required in line~\ref{line:x}, and to describe how the vertices $w_1, \dots, w_\ell \in N_G\bigl(v_{t}; S\setminus\im(\varphi_{t-1})\bigr)$ are chosen in line~\ref{line:choosenbs}. Observe that as $x_t$ lies in no triangles in $H$, there are no adjacency conditions imposed on the choice of vertices~$w_1, \dots, w_{\ell}$.

For the former, let~$\tilde{H}$ denote the auxiliary graph with vertex set~$V(H)$, and edges~$xy \in E(\tilde{H})$ if and only if~$\dist_H(x,y) \leq 7$. Now, consider the induced subgraph $\tilde{H}[Z]$. The maximum degree of~$\tilde{H}[Z]$ is at most~$\Delta(\tilde{H}[Z]) \leq \Delta^{\!8} -1$. By the Hajnal--Szemer\'edi Theorem (Theorem~\ref{thm:Hajnal-Szemeredi}) applied to~$\tilde{H}[Z]$, there exist at least~$|Z|/\Delta^{\!8} \geq Cp^{-2}/\Delta^{\!8}$ vertices in~$Z$ that form an independent set in $\tilde{H}[Z]$, and thus have pairwise distance at least~$8$ in~$H$. Since $|V_0| \leq C_{\LG}\,p^{-2}$ and $C > \Delta^{\!8}\, C_{\LG}$, this subset of vertices in~$Z$ is sufficiently large to be pre-embedded to the entirety of~$V_0$ under the constraints of line~\ref{line:x}, as required. 

Observe that as every pair of vertices in~$X_0$ has distance at least $8$ in the graph~$H$, the set~$X_0$ cannot contain the vertices $y_1, \dots, y_\ell \in N_H(x_t)$ mentioned in line~\ref{line:y} for any time $t \leq t^*$. Next, we need to describe how the vertices $w_1,\dots,w_\ell \in N_G\bigl(v_{t}; S\setminus\im(\varphi_{t-1})\bigr)$ will be chosen in line~\ref{line:choosenbs}. As these vertices are used to pre-embed~$N_H(x_t)$, we require that~$w_1, \dots, w_{\ell}$ are chosen such that they satisfy the properties~\ref{itm:W1}--\ref{itm:W4}. This will in turn aid in proving the image restriction condition~\ref{itm:SBL3} required for embedding the remaining graph using Lemma~\ref{lem:rg_blowup} later in the proof. To meet these requirements, the vertices~$w_1, \dots, w_\ell$ shall be chosen by an application of Lemma~\ref{lem:resil_common_nbhd}. For this, we first show that the set~$C_t \coloneqq N_G\bigl(v_{t}; S\setminus\im(\varphi_{t-1})\bigr)$ stays large enough at all times $t \leq t^*$. 

\begin{claim}
    \label{clm:avgS}
    At each time $t \leq t^*$, the set $C_t \coloneqq N_G\bigl(v_{t}; S\setminus\im(\varphi_{t-1})\bigr) $ has at least $\mu pn/4$ vertices. 
\end{claim}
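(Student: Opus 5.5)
The plan is to argue by contradiction and to exploit the greedy rule in line~\ref{line:choosev}: a vertex of $V_0$ that loses many $S$-neighbours to $\im(\varphi_{t-1})$ is processed early, so a failure forces many vertices of $V_0$ to be simultaneously ``overloaded'' on a small set. Lemma~\ref{lem:rand_G_property} forbids this.

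Write $Y_s\coloneqq \im(\varphi_s)\cap S$. Since each step adds at most $\Delta+1$ vertices, of which only the $w_i$ lie in $S$, we have $|Y_s\setminus Y_{s'}|\le \Delta(s-s')$. For $v\in V_0\setminus\im(\varphi_{s})$ let $f_s(v)\coloneqq|N_G(v;S)\setminus \im(\varphi_s)|$. By~\eqref{eq:NGvSlb}, $f_0(v)\ge(0.5-2\eps)\mu pn$. Suppose that $|C_t|=f_{t-1}(v_t)<\mu pn/4$ for some $t\le t^*$.

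\textbf{Step 1: locate the start of the bad run.} Let $s_0<t$ be the largest index with $f_{s_0-1}(v_{s_0})\ge \mu pn/3$. If there is no such index, set $s_0=0$, with the convention that the relevant values are $f_0\ge(0.5-2\eps)\mu pn\ge\mu pn/3$. By minimality in line~\ref{line:choosev}, every vertex still available at time $s_0$ has value at least $f_{s_0-1}(v_{s_0})\ge\mu pn/3$. In particular this holds for each of $v_{s_0+1},\dots,v_t$. Let $k\coloneqq t-s_0$ and $U\coloneqq\{v_{s_0+1},\dots,v_t\}$, and consider any $v_s\in U$. By the definition of $s_0$ (for $s<t$) or by assumption (for $s=t$), we have $f_{s-1}(v_s)<\mu pn/4$. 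Hence $v_s$ has at least $\mu pn/3-\mu pn/4=\mu pn/12$ neighbours in $X'\coloneqq Y_{t-1}\setminus Y_{s_0-1}$, and $|X'|\le \Delta(k+1)\le 2\Delta k$.

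\textbf{Step 2: contradiction via Lemma~\ref{lem:rand_G_property}.} First, a single vertex has at least $\mu pn/12$ neighbours in $X'$, so $2\Delta k\ge\mu pn/12$, which gives $k\ge \mu pn/(24\Delta)$. Now enlarge $X'$ to a set $X$ of size $x\coloneqq\max\{2\Delta k,\ C_{\RG}p^{-1}\log n\}$. Since $k\le|V_0|\le C_{\LG}p^{-2}$ and $np^2\ge C\log n$ by~\eqref{eq:p_ineq}, with $C$ huge compared with $C_{\LG},\Delta,\mu^{-1}$, we get $x\le \mu n/24$. Every vertex of $U$ then has at least $\mu pn/12>(1+\eps)px$ neighbours in $X$, i.e.\ deviates from $p|X|$ by more than $\eps p|X|$. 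Lemma~\ref{lem:rand_G_property} (applied in $\Gamma\supseteq G$) then yields
\[
k=|U|\le C_{\RG}\,p^{-1}\log(en/x)\le 2C_{\RG}\,p^{-1}\log n .
\]
On the other hand, Step 1 together with~\eqref{eq:p_ineq} gives $k\ge \mu pn/(24\Delta)\ge \mu C p^{-1}\log n/(24\Delta)$. This contradicts the upper bound, because $C\ge 60\eps^{-7}r_1^5C^*$ far exceeds $48\Delta C_{\RG}/\mu$ (recall $\mu=\eps^2/r$).

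The main obstacle is Step 1. The naive bound $|Y_{t-1}|\le\Delta|V_0|=O(p^{-2})$ is not small compared with $\mu pn$ when $p$ is near $(\log n/n)^{1/2}$, so one cannot simply subtract the image. The potential argument through the last ``good'' time $s_0$ is exactly what converts the minimisation rule into the statement that many vertices concentrate their $S$-neighbourhoods on a set of size $O(k)$. Once that is in place, only routine constant bookkeeping remains, namely checking $x\le\mu n/24$ and the choice of $C$ against $C_{\RG}$.
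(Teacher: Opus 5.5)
\textbf{There is a genuine gap in Step~1.} You define $s_0$ as the last index before $t$ with $f_{s_0-1}(v_{s_0})\ge \mu pn/3$, and then claim that ``by the definition of $s_0$'' every $v_s$ with $s_0<s<t$ satisfies $f_{s-1}(v_s)<\mu pn/4$. The definition only gives $f_{s-1}(v_s)<\mu pn/3$.

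Minimality at time $s_0$ gives $f_{s_0-1}(v_s)\ge\mu pn/3$. Combined with the previous bound, this yields no positive lower bound on $|N_\Gamma(v_s;X')|$, so such a $v_s$ may have lost almost nothing. In fact $|C_s|$ can drop by at most $\Delta+1$ per step, so on its way from about $\mu pn/3$ to below $\mu pn/4$ it passes through all intermediate values. Many vertices of $U$ therefore need not be witnesses in Lemma~\ref{lem:rand_G_property}.

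The only vertex you actually know to deviate is $v_t$. That suffices for your lower bound on $k$, but not for the claim that all $k$ vertices of $U$ deviate, which is exactly what Step~2 needs. A minor point: $v_s\in V_0$ may lie in $S$, so each step adds up to $\Delta+1$ vertices to $Y_s$, not $\Delta$. This is harmless.

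\textbf{The missing idea} is to use $v_t$ as a competitor at earlier times, which is what the paper does. Since $v_t$ is still available at every time $s\le t$, minimality in line~\ref{line:choosev} gives
\[
|C_s|\le \bigl|N_G\bigl(v_t;S\setminus\im(\varphi_{s-1})\bigr)\bigr|\le |C_t|+(\Delta+1)(t-s).
\]
Hence over the last $\mu pn/(20(\Delta+1))$ steps one has $|C_s|\le 0.3\mu pn$. Each of these distinct vertices therefore has at least $(0.2-2\eps)\mu pn$ neighbours in $A=\im(\varphi_{t-1})$. The paper applies Lemma~\ref{lem:rand_G_property} directly to $A$: since $|A|\le(\Delta+1)|V_0|\le\eps\mu n$, we get $(1+\eps)p|A|\ll 0.1\mu pn$. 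There are $\mu pn/(20(\Delta+1))\gg C_{\RG}p^{-1}\log n$ such witnesses. Small $t$ is handled by plain subtraction.

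So the localisation to $X'=Y_{t-1}\setminus Y_{s_0-1}$ is unnecessary. What matters is $p|A|$ versus $\mu pn$, not $|A|$ versus $\mu pn$, and your own Step~2 already enlarges $X'$ to size up to $\mu n/24$.

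\textbf{How to rescue your framework.} Use the same competitor bound and restrict $U$ to indices $s\ge t-\mu pn/(24(\Delta+1))$. These lie in $(s_0,t]$ because the loss of $v_t$ gives $k\ge\mu pn/(12(\Delta+1))$. For them $f_{s-1}(v_s)<7\mu pn/24$, hence each has at least $\mu pn/24$ neighbours in $X'$. Step~2 then goes through with adjusted constants.
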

\begin{claimproof}
  Note that for any time $t \leq t^*$, as each vertex $x_t \in X_0$ has at most $\Delta$ neighbours in $H$, we have $|\im(\varphi_{t-1})|\leq (\Delta+1)t$. Thus, if $t \leq 0.2 \mu p n/(\Delta+1)$, then by equation~\eqref{eq:NGvSlb}, we have 
  \[
    |C_t| \geq |N_G(v_t; S)| - |\im(\varphi_{t-1})| \geq (0.3 -
    2\eps)\mu p n \geq \mu pn/4,
  \]
  as required. Hence, we may assume
  that $t >  0.2 \mu p n/ (\Delta+1)$. Suppose for a contradiction
  that $|C_t| < \mu pn/4$. Since at each time~$t'$, the vertex~$v_{t'}$
  is chosen to minimise~$|C_{t'}|$, it follows that for every $t' \in
  \bigl( t -  \mu pn/20(\Delta+1), t\bigr)$, the set~$C_{t'}$ is
  such that
  \[
    \bigl|C_{t'}\bigr| \leq \bigl|N_G(v_t; S \setminus \im(\varphi_{t'
      - 1}))\bigr| \leq \bigl|C_t\bigr| + \bigl|\im(\varphi_{t-1})
    \setminus \im(\varphi_{t'-1})\bigr| \leq (1/4 + 1/20)\mu pn = 0.3
    \mu p n.
  \]
  
  Let $B$ be the set of all vertices $v_{t'}$ chosen over this time interval, and let $A$ denote the set $\im(\varphi_{t-1}) \subseteq V(\Gamma)$. By equation~\eqref{eq:p_ineq} and as $C > (40\Delta/\eps\mu)\max\{C_{\RG}, C_{\LG}\}$ (where we use~$\mu = \eps^2/r \geq \eps^2/r_1)$,  we obtain the following bounds on the size of~$A$.
    \begin{align*}
    |A| &\geq t-1 \geq 0.15 \mu p n/(\Delta +1)\geq 0.1 \mu\ C p^{-1}\log n /(\Delta+1)
    > C_{\RG}\,p^{-1} \log n, \quad\text{and}\\[3pt]
    |A| &\leq (\Delta+1)|V_0| \leq (\Delta+1) \,C_{\LG}p^{-2}  \leq  \eps \mu\,  C p^{-2} \leq \eps \mu\, n/\log n \leq  \eps \mu n. 
    \end{align*}
    As the random graph~$\Gamma$ satisfies the outcome of Lemma~\ref{lem:rand_G_property} and as $|A| \geq C_{\RG}\, p^{-1} \log n$, it follows that at most $C_{\RG}\, p^{-1} \log (en/|A|)$ vertices $v \in V(\Gamma)$ satisfy the inequality $|N_{\Gamma}(v; A)| > (1+\eps)p|A|$. We show that this is not the case, thereby contradicting the assumption that $|C_t| < \mu p n /4$. 
    
    For this, consider some vertex $b\in B$, where $b = v_{t'}$. By equation~\eqref{eq:NGvSlb}, we have that $|N_G(b; S)| \geq (0.5 - 2\eps) \mu p n$, and by choice of $t'$, we have that $|C_{t'}| = |N_G(b;S)\setminus \im(\varphi_{t'-1})|  \leq 0.3 \mu p n$.  Hence for the set $A = \im(\varphi_{t-1})$, we obtain the inequality~$|N_G(b; S \cap A)| \geq (0.2-2\eps)\mu pn \geq 0.1 \mu p n $. Thus, as~$|A| \leq \eps \mu n$, for every vertex~$b \in B$, the size of the set~$N_\Gamma(b; A)$ is bounded below by 
    \[|N_{\Gamma}(b; A)| \geq |N_\Gamma(b; S \cap A)| \geq 0.1 \mu p n  \geq 0.1 \eps^{-1}p |A| > (1+\eps)p |A|.\] 
    Moreover, as $|A|$ is bounded below by $\log n$, for sufficiently large $n$, the set~$B$ has size at least 
    \[|B| \geq \mu p n/20(\Delta+1)-1 \geq (\mu/40\Delta) C p^{-1} \log n \geq  C_{\RG}\, p^{-1}\log\bigl(en/|A|\bigr)\,.\] 
    Thus the outcome of Lemma~\ref{lem:rand_G_property} is violated for the set~$A$, as witnessed by the vertices~$b \in B$. Hence, by contradiction, we have that $|C_t| \geq \mu p n/4$ for all times $t$, as required.
\end{claimproof} 

Thus, at each time $t$ of the algorithm, there are enough available vertices in $C_t$ to pick the images $w_1, \dots, w_{\ell}$ of $y_1, \dots, y_{\ell}$. To choose these vertices, we use Lemma~\ref{lem:resil_common_nbhd}, for which we first find a subset~$W_t \subseteq C_t$ that satisfies the properties~\ref{itm:CNL3} and~\ref{itm:CNL4}. 

\begin{claim}
\label{clm:CNL34}
    For each time $t \leq t^*$, there exist a subset $W_t \subseteq C_t$ with size~$|W_t| \geq \mu p n/8m$, and an index~$i_t \in [m]$ such that for all $w \in W_t$ and $j \in [K]$, we have $|N_G(w; V_{i_t\,j})| \geq 2d p |V_{i_t\,j}|$.   
\end{claim}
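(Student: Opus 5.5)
The plan is to show that almost every vertex $w\in C_t$ has at least one clique $\{i\}\times[K]$ of $R'$ into all of whose clusters it has relatively large $G$-degree. Pigeonholing over the $m$ cliques then yields $i_t$ and $W_t$.

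\textbf{Step 1: discard a small set of atypical vertices.} I will not try to exploit $C_t\cap V_0=\emptyset$, since $|V_0|\le C_{\LG}p^{-2}$ need not be small compared with $\mu pn$ when $\Delta=2$. Instead I apply Lemma~\ref{lem:rand_G_property} to the following sets.
\begin{itemize}
\item Each cluster $X=V_{ij}$. These satisfy $|V_{ij}|\ge n/4r\ge C_{\RG}p^{-1}\log n$ by~\eqref{eq:p_ineq}.
\item One set $X_0'\supseteq V_0$ with $|X_0'|=\max\{|V_0|,C_{\RG}p^{-1}\log n\}$.
\end{itemize}
Let $D$ be the set of vertices $v$ violating $|N_\Gamma(v;X)|\le(1+\eps)p|X|$ for one of these $r+1$ sets. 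Then $|D|\le (r+1)C_{\RG}p^{-1}\log(4er)+C_{\RG}p^{-1}\log n$. Using~\eqref{eq:p_ineq} and the size of $C$ relative to $r_1,\eps^{-1},C_{\RG}$, this gives $|D|\le \mu pn/8$. Every $w\notin D$ then satisfies
\[
|N_\Gamma(w;V_0)|\le (1+\eps)p\bigl(C_{\LG}p^{-2}+C_{\RG}p^{-1}\log n\bigr)=O(p^{-1})\le \eps pn,
\]
again since $np^2\ge C\log n$. It also satisfies $|N_\Gamma(w;V_{ij})|\le(1+\eps)p|V_{ij}|$ for all $i,j$.

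\textbf{Step 2: every $w\in C_t\setminus D$ has a good clique.} Call $i\in[m]$ \emph{good} for $w$ if $|N_G(w;V_{ij})|\ge 2dp|V_{ij}|$ for every $j\in[K]$. Suppose $i$ is bad, witnessed by some $j$, and let $U_i=\bigcup_{j'}V_{ij'}$. By $K$-equitability, $|V_{ij}|\ge |U_i|/K-1$. Hence
\[
|N_G(w;U_i)|\le (1+\eps)p|U_i|-(1+\eps-2d)p|V_{ij}|\le \bigl(1-1/K+2\eps+2d\bigr)p|U_i|+p.
\]
If every $i$ were bad, summing over $i$ and adding the $V_0$-contribution from Step 1 would give
\[
\deg_G(w)\le (1-1/K+2\eps+2d)pn+mp+\eps pn<(\xi+\gamma)pn,
\]
because $\xi\ge 1-1/K$ and $d\le\gamma/20$. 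This contradicts $\delta(G)\ge(\xi+\gamma)pn$.

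\textbf{Step 3: pigeonhole.} By Claim~\ref{clm:avgS} and Step 1, $|C_t\setminus D|\ge \mu pn/4-\mu pn/8=\mu pn/8$. Assign to each $w\in C_t\setminus D$ one good index. Some $i_t\in[m]$ receives at least $\mu pn/8m$ vertices, and I let $W_t$ be this set.

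The main obstacle is Step 1. One must control the $\Gamma$-degrees of vertices that may lie in $V_0$, where~\ref{itm:RG4} gives no information, and one must verify that the exceptional set $D$ really is $O(rp^{-1}\log n)$ and hence negligible against $\mu pn$. This is where~\eqref{eq:p_ineq} and the large choice of $C$ enter.
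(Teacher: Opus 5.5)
Your proposal is correct and follows essentially the same route as the paper: discard the $O(rp^{-1}\log n)\le \mu pn/8$ vertices of $C_t$ with atypically many $\Gamma$-neighbours in $V_0$ or in some cluster $V_{ij}$ (via Lemma~\ref{lem:rand_G_property}), show by degree counting against $\delta(G)\ge(\xi+\gamma)pn$ that every remaining $w$ has some clique $\{i\}\times[K]$ into all of whose clusters it has $G$-degree at least $2dp|V_{ij}|$, and then pigeonhole over $i\in[m]$. The only difference is cosmetic: you pad $V_0$ to a set of size at least $C_{\RG}p^{-1}\log n$ before applying Lemma~\ref{lem:rand_G_property}, whereas the paper notes that if some vertex has more than $\eps pn$ $\Gamma$-neighbours in $V_0$, then already $|V_0|\ge\eps pn\ge C_{\RG}p^{-1}\log n$, so the lemma applies to $V_0$ directly.
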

\begin{claimproof}
    Given the set $C_t$ at time $t$, let $Y_0 \subseteq C_t$ be the set of vertices $Y_0 \coloneqq \{y \in C_t : |N_{\Gamma}(y; V_0)| > \eps pn\}$. We first obtain an upper bound on the size of $Y_0$ using Lemma~\ref{lem:rand_G_property}. Note that if $Y_0 \neq \emptyset$, then by equation~\eqref{eq:p_ineq} and as $C \geq 2\eps^{-1}C_{\RG}$, we have $|V_0| \geq \eps pn \geq \eps\, C p^{-1}\log n \geq C_{\RG} p^{-1} \log n$. 
    Similarly, as $|V_0| \leq C_{\LG}p^{-2}$, every vertex~$y \in Y_0$ has at least \[|N_{\Gamma}(y; V_0)| \geq \varepsilon p n \geq \eps\, C p^{-1} \log n \geq  2 C_{\LG}p^{-2}\cdot p > (1+\eps)p|V_0|\] $\Gamma$-neighbours in $V_0$. Thus, by Lemma~\ref{lem:rand_G_property}, $Y_0$ has size at most $|Y_0| \leq C_{\RG}\, p^{-1} \log\bigl(en/|V_0|\bigr)$. 

    Analogously, for all $i \in [m]$ and $j \in [K]$, 
    define $Y_{ij} \coloneqq \big\{y \in C_t: |N_{\Gamma}(y; V_{ij})| > (1+\eps)p|V_{ij}| \big\}$. Note that by~\ref{itm:RG1} and as $r \leq r_1$, we have $|V_{ij}| \geq n/4r \geq n/4r_1$. It then follows from equation~\eqref{eq:p_ineq} and the relation~$C \geq (4r_1/\eps)\, C_{\RG}$, that $|V_{ij}| \geq C_{\RG}\,p^{-1} \log n$. Hence, by the conclusion of Lemma~\ref{lem:rand_G_property} applied to the set~$V_{ij}$, we have that~$|Y_{ij}| \leq C_{\RG}\, p^{-1} \log \bigl(en/|V_{ij}|\bigr)$ for all $i \in [m]$  and $j \in [K]$. Now, if we let $Y \coloneqq Y_0 \cup \bigl( \bigcup_{\,i \in [m], j \in [K]} Y_{ij} \bigr)$, then $|Y| \leq (r+1)C_{\RG}\, p^{-1} \log n \leq \mu p n/8$, where the latter inequality follows from equation~\eqref{eq:p_ineq} and the relation $C \geq (16\,r_1^2/\eps^2)\, C_{\RG} \geq (16\,r_1/\mu)\, C_{\RG}$.
    
    Set $W' = C_t \setminus Y$, and fix some $w \in W'$.
    For a contradiction, suppose that for every $i \in [m]$, there exists some $j \in [K]$, such that $|N_G(w; V_{ij})| \leq 2dp|V_{ij}|$. Then, by the $K$-equitability of the partition~$\mathcal{V} = \{V_{ij}\}$ of $V(G) \setminus V_0$, we have
    \[|N_G(w)| \leq \eps pn + 2d p \frac{n}{K} + (1+ \eps)(K-1)p \frac{n}{K} + 2r < \biggl( \frac{K-1}{K} + \frac{\gamma}{2} \biggr) pn.\]
    This contradicts the minimum degree condition for $G$, as $\exth \geq \Delta/(\Delta+1)$. Thus, for each $w \in W'$, there exists an index $i(w) \in [m]$ such that for all $j \in [K]$, we have~$|N_G(w; V_{i(w)j})| \geq 2dp|V_{i(w)j}|$. Letting $i_t \in [m]$ be the modal index over all $w \in W'$ and setting $W_t = \{w \in W': i(w) = i_t\}$ gives the required set $W_t \subseteq C_t$, with size~$|W_t| \geq |W'|/m \geq \mu p n /8m$. 
\end{claimproof}

As $\Gamma$ satisfies the conclusion of Lemma~\ref{lem:resil_common_nbhd} with input $d$, $K$, $\Delta$, $\eps^*= 0.1 \eps_{\BL}$, $m$, and $5\eps$, we apply this good event to choose the images~$w_1, \dots, w_{\ell}$ of~$y_1, \dots, y_{\ell}$ at each time~$t \leq t^*$ in step~\ref{line:choosenbs} of the pre-embedding algorithm. Given some time~$t \leq t^*$, let~$W_t \subseteq C_t$ and index~$i_t \in [m]$ be returned by Claim~\ref{clm:CNL34}, and let $W \subseteq W_t$
be any $\mu p n/8m$-sized subset of $W_t$. For each $j \in [K]$, set $A^t_j \coloneqq V_{i_t\,j}\setminus W$. Then,~$\{A_j^t\}_{j \in [K]}$ and~$W$ are pairwise disjoint subsets of~$V(G)$. Moreover, by~\ref{itm:RG1}, we have 
\[
  4n/mK \geq |A^t_j| \geq (1-2\eps)n/r - \mu p n/8m \geq (1 - 4\eps)
  n/r \geq n/4mK.
  \] 
This is~\ref{itm:CNL1} in Lemma~\ref{lem:resil_common_nbhd}.
Next, observe that by~\ref{itm:RG1}, we have~$|W| \leq \eps^2pn/4r \leq \eps^2 p|V_{ij}|$ for all~$i \in [m]$ and~$j \in [K]$, and 
hence, \begin{equation}|A^t_j \symmdiff V_{i_t\,j}| = |V_{i_t\,j} \setminus A_j^t| \leq |W| \leq \eps^2 |V_{i_t\,j}|. \label{eq:Ajt}\end{equation} 
Then, as $(V_{i_t\,j}, V_{i_t\,j'})$ forms an $(\eps, d, p)$-lower-regular pair by~\ref{itm:RG2}, it follows from Lemma~\ref{lem:sparse_reg_robust} 
that $(A^t_j, A^t_{j'})$ is $(5\eps, d, p)$-lower-regular for all~$j \neq  j' \in [K]$. This proves~\ref{itm:CNL2}. Finally,~\ref{itm:CNL3} follows from the choice of~$\mu$, and~\ref{itm:CNL4} holds by Claim~\ref{clm:CNL34} and the fact that
\[
  |N_G(w; A^t_j)| \geq 2dp|V_{i_t\,j}| - |W| \geq
  (2d-\eps^2)p|V_{i_t\,j}| \geq dp|A^t_j|.
\] 
\indent Thus, by Lemma~\ref{lem:resil_common_nbhd}, there exist vertices $w_1, \dots, w_{\Delta} \in W \subseteq W_t$ which satisfy~\ref{itm:W1}--\ref{itm:W4} for the sets $A^t_j$. We complete the pre-embedding algorithm by assigning the vertices $y_1, \dots, y_\ell$ arbitrarily to the vertices~$w_1, \dots, w_\ell$, where $\ell \leq \Delta$. 

 \subsubsection*{Image Restrictions and Buffer Vertices}

Recall that $X_0$ was the set of vertices of $H$ embedded to $V_0$ in line~\ref{line:x} of the pre-embedding algorithm, and that the algorithm embeds $N_H[X_0]$ into $V_0 \cup S$. Set $V'_{ij} \coloneqq V_{ij} \setminus \im(\varphi_{t^*})$, and similarly define the set of available vertices $V' \coloneqq  \bigcup_{i,j} V'_{ij} $, the induced subgraph $G' \coloneqq G[V']$, and the partition $\mathcal{V}' \coloneqq \{V'_{ij}\}_{i \in [m], j\in[K]}$ of $V(G')$. We first obtain a lower bound on the sizes of $V'_{ij}$. Note that by equation~\eqref{eq:p_ineq} and as $C > (40\Delta\, r_1/\eps^3)\,C_{\LG} \geq (40\Delta/\eps\mu)\,C_{\LG}$, we have
\begin{equation}
       \bigl|\im(\varphi_{t^*})\bigr| \leq (\Delta+1)|V_0| \leq (\Delta+1) \,C_{\LG}p^{-2}  \leq  \eps \mu\,  C p^{-2} \leq \eps \mu n/\log n \leq  \eps \mu n.
       \label{eq:V0_ub}
\end{equation}
Then, as $|V'_{ij}| \geq |V_{ij}| - |\im(\varphi_{t^*})|$ and using~\ref{itm:RG1}, we obtain the following lower bounds on the sizes of the vertex sets~$V'_{ij}$ and~$V(G')$:
\begin{equation}
    |V_{ij}'| \geq (1 - 4r\mu\eps)|V_{ij}| \geq (1- \eps^2)|V_{ij}| \quad \text{and similarly,}\quad |V(G')| \geq (1 - \eps^2)n . \label{eq:Vprime_lb}
\end{equation} 

Let $X' \coloneqq V(H) \setminus \dom(\varphi_{t^*})$ be the set of unembedded vertices in~$H$, and let~$H' \coloneqq H[X']$ be the subgraph of~$H$ induced by~$X'$. We will use Lemma~\ref{lem:rg_blowup} to embed~$H'$ into~$G'$ in a manner that extends the pre-embedding~$\varphi_{t^*}$ to an embedding of~$H$ into~$G$. For this, we first need to construct a size-compatible partition~$\mathcal{X}' \coloneqq \{X'_{ij}\}_{i \in [m], j\in[K]}$ of $V(H')$ that satisfies the requirements for image restrictions and buffer vertices in Lemma~\ref{lem:rg_blowup}.

Let~$X_I$ be the set of vertices in~$V(H')$ which have at least one neighbour in~$N_H(X_0)$. This will form the set of image-restricted vertices in~$X'$.  Then, for each~$x \in X_I$, there is a unique vertex~$x_t \in X_0$ such that $N_H(x) \cap N_H(x_t) \neq \emptyset$. Indeed, for otherwise, if~$N_H(x)$ intersects~$N_H(x_t)$ and~$N_H(x_{t'})$, then~$\dist_H(x_t, x_{t'}) \leq 4$, which contradicts the choice of $x_t$ and $x_{t'}$ in line~\ref{line:x} of the pre-embedding algorithm. For each time~$t \leq t^*$, let~$X_I^t$ be the vertices of~$X_I$ for which~$N_H(x) \cap N_H(x_t) \neq \emptyset$, and let $N_t \coloneqq N_{H'}[X_I^t] = N_{H'}(X_I^t) \cup X_I^t$ be the closed neighbourhood of~$X_I^t$ in~$H'$. 

Note that for $t \neq t'$, there is no edge between $N_t$ and $N_{t'}$, for otherwise, $\dist_H(x_t, x_{t'}) \leq 7$, again contradicting the choice of~$x_t$ and~$x_{t'}$. For each $t \leq t^*$, we have $\Delta\big(H[N_t]\big) \leq \Delta \leq K-1$, and so by Theorem~\ref{thm:Hajnal-Szemeredi}, the vertex set $N_t$ can be equitably partitioned into $K$ independent sets. Add each of these independent sets arbitrarily to distinct parts in $\bigl\{X'_{i_t\,j}\bigr\}_{j \in [K]}$, where $i_t\in [m]$ is the index returned by Claim~\ref{clm:CNL34} at time~$t$. Repeat this for each time~$t \leq t^*$. Note that this step adds at most~$\eps|V'_{ij}|$ vertices to the part~$X'_{ij}$, as seen by the inequality
\begin{equation}\begin{split}
    \bigl|N_{H'}[X_I]\bigr| &\leq (\Delta+1)|X_I| \leq \Delta(\Delta+1) \bigl|\dom(\varphi_{t^*})\bigr| 
    \overset{\eqref{eq:V0_ub}}{\leq} \eps \mu n 
    \overset{\text{\ref{itm:RG1}}}{\leq} \eps \mu\, 4r|V_{ij}|\\
    &= 4 \eps^3 \,|V_{ij}| \overset{\eqref{eq:Vprime_lb}}{\leq}  \frac{4\eps^3}{1-\eps^2} |V_{ij}'| \leq \eps |V_{ij}'| \leq \min\bigl\{\rho |V'_{ij}|, \alpha|V'_{ij}| \bigr\} \,. \label{eq:NXI}
\end{split}
\end{equation}
Here, the third inequality follows from a computation similar to equation~\eqref{eq:V0_ub}, but with the relation $C \geq (40\Delta^3/\eps\mu)\,C_{\LG}$ instead. We now partition the remaining vertices of $V(H')$ (i.e. the vertices of~$H'$ not in~$N_{H'}[X_I]$) into the parts~$\{X'_{ij}\}$, and obtain potential buffer vertices using Lemma~\ref{lem:Lemma_H}. 

Note that by Lemma~\ref{lem:resil_G}, the reduced graph $R$ has minimum degree $\delta(R) \geq (\exth + \gamma/2\,)r$ and $R'$ is a $K$-clique factor of $R$. Moreover, by~\ref{itm:RG1}, equation~\eqref{eq:Vprime_lb}, and as $\kappa = 16$ and $\eps \leq \gamma/24$, the following holds for the parts~$V'_{ij}$ for all $i \in [m]$ and $j \in [K]$:
\begin{align}
    \begin{split}
    |V'_{ij}| &\leq 4n/r \leq 8(1-\eps^2)\,n/r \leq 8 |V(G')|/r\leq \kappa\, |V(G')|/r \quad\text{and}\\[2pt]
    |V'_{ij}| &\geq (1-\eps^2) |V_{ij}| \geq (1-3\eps)\, n/r \geq \bigl(1 - \gamma/4\bigr)|V(G')|/{r}.
    \end{split}\label{eq:VpwrtG}
\end{align}
\indent Finally, for each~$i \in [m]$ and~$j\in [K]$, let~$N_{ij}$ be the set of vertices of~$N = N_{H'}[X_I]$ added to the part~$X'_{ij}$, and set~$\mathcal{N} \coloneqq \{N_{ij}\}_{i\in[m], j\in[K]}$. This partition $(H'[N], \mathcal{N})$ is an $R$-partition by construction. Moreover, by equation~\eqref{eq:NXI}, we have that $|N_{ij}| \leq \alpha|V'_{ij}|$ for all $i \in [m]$ and $j\in [K]$. Thus, Lemma~\ref{lem:Lemma_H} with input $\Delta$, $\gamma/2$, $\kappa = 16$, and $K = \Delta+1$, returns a partition $\mathcal{X}' = \{X'_{ij}\}$ of $V(H')$ and a collection of subsets $\tilde{X}_{ij} \subseteq X'_{ij}$, for which~\ref{itm:H1}--\ref{itm:H4} hold and such that~$N_{ij} \subseteq X'_{ij}\setminus \tilde{X}_{ij}$.     

Let $X_{ij}^* \coloneqq N_{ij} \cap X_I$ denote the image-restricted vertices added to the part $X'_{ij}$. 
Then, for each $x \in V(H')$, the set of restricting vertices $J_x \subseteq V(\Gamma) \setminus V(G')$ and the image restrictions $I_x \subseteq V(G')$ can be defined as follows. Set $V(x) \coloneqq V'_{ij}$ for all $x \in X'_{ij}$. Then, for all $x \in V(H')$, we have 
\[
J_x \coloneqq \varphi_{t^*}\big(N_H(x) \cap N_H(X_0) \big) \quad
\text{and}\quad I_x \coloneqq N^*_G \big(J_x ; V(x)\big).
\]
Observe that~$J_x = \emptyset$ and hence,~$I_x = V(x)$ for all non-image-restricted vertices~$x \notin X_I$.

 \subsubsection*{Applying the Sparse Blow-up Lemma}

 We are now ready to apply Lemma~\ref{lem:rg_blowup} to the graphs~$G'$ and~$H'$. Note that we used Lemma~\ref{lem:resil_G} to obtain a reduced graph~$R$ on~$r \leq r_1$ vertices and with a spanning $K$-clique factor~$R'$ of degree~$\Delta(R') \leq \Delta_{R'} = K$. For the graph~$G' \subseteq \Gamma$, we have the partition~$\mathcal{V}' = \{V'_{ij}\}$ of~$V(G')$. This partition is~$\kappa = 16$ balanced as, by equation~\eqref{eq:VpwrtG}, we have~$|V(G')|/2r \leq |V'_{ij}| \leq 8|V(G')|/r$ for all $i \in [m]$ and $j \in [K]$. Moreover, each part has size at least $n/\kappa r_1$, as required by the preamble of the good event of Lemma~\ref{lem:rg_blowup}. Finally, for the graph $H'$ with $\Delta(H') \leq \Delta(H) \leq \Delta$, we have a partition $\mathcal{X}' = \{X'_{ij}\}$ that is size-compatible to $\mathcal{V}'$ by~\ref{itm:H1}.
 Thus, it simply remains to check that the conditions~\ref{itm:SBL1}--\ref{itm:SBL3} are satisfied.

Condition~\ref{itm:SBL1} directly follows from~\ref{itm:H2} and~\ref{itm:H3}. For~\ref{itm:SBL2}, note that by equation~\eqref{eq:Vprime_lb}, we have that $|V_{ij} \symmdiff V'_{ij}| \leq
\eps^2|V_{ij}|$. Hence, by~\ref{itm:RG2} and Lemma~\ref{lem:sparse_reg_robust}, the partition~$\mathcal{V'} = \{V'_{ij}\}$ is~$(5\eps, d, p)$-regular on~$R$ and~$R'$. By choice of $\eps \leq \eps_{\BL}/5$, this implies
$(\eps_{\BL}, d, p)$-regularity. Next we verify the regularity inheritance conditions of~\ref{itm:SBL2}. It follows from equation~\eqref{eq:s_vij} and~\ref{itm:RG4} that
\begin{equation}
  \bigl|N_{\Gamma}(v; S \cap  V_{ij} )\bigr|
  \leq 40\, \eps^2 p\bigl|V_{ij}\bigr|
  \leq 40\,\eps^2/(1-\eps)\cdot\big|N_{\Gamma}(v; V_{ij})\big| \leq
  \eps\big|N_{\Gamma}(v; V_{ij})\big|\,.
  \label{eq:deg_check2}
\end{equation}
Since the pre-embedding algorithm does not embed vertices outside~$V_0 \cup S$, it follows that
 \[\big|N_{\Gamma}(v; V_{ij}) \symmdiff N_{\Gamma}(v; V'_{ij})\big| \leq \bigl|N_{\Gamma}(v; S \cap  V_{ij} )\bigr| \leq  \eps \big|N_{\Gamma}(v; V_{ij})\big|\,.\]
\indent
 Now, Lemma~\ref{lem:sparse_reg_robust} along with~\ref{itm:RG3} and the relation~$\eps_{\BL} \geq 10 \sqrt{\eps}$, implies the one-sided inheritance condition on~$R'$. The two-sided inheritance condition on~$R'$ for~$\tilde{\mathcal{X}}$ follows by additionally observing that~$\mathcal{X}'$ is an $R$-partition by~\ref{itm:H2}. Finally, we verify the minimum degree conditions for super-regularity of~$(G', \mathcal{V'})$ on~$R'$. Fix any $v \in V'_{ij'}$ and let $j \neq j'$. By~\ref{itm:RG2}, we know that 
 \[\bigl|N_G(v; V_{ij})\bigr| \geq (d-\eps) \max \bigl\{ p|V_{ij}|, |N_{\Gamma}(v; V_{ij})|/2\bigr\}\,.\]
 Moreover, by equations~\eqref{eq:s_vij} and \eqref{eq:deg_check2}, we have
 \[\bigl|N_G(v; V_{ij} \cap S )\bigr| \leq \min\bigl\{ \eps p |V_{ij}|, \eps|N_{\Gamma}(v; V_{ij})|\, \bigr\} \leq 2\eps \max\bigl\{ p |V_{ij}|, |N_{\Gamma}(v; V_{ij})|/2\bigr\}.
 \]
 Combining both the above equations, and using the fact~$V_{ij}\setminus V_{ij}' \subseteq V_{ij} \cap S$,  we obtain 
\begin{align*}
    \bigl|N_G(v; V_{ij}')\bigr| &\geq \bigl|N_G(v; V_{ij})\bigr| -\bigl|N_G(v; S \cap V_{ij})\bigr|
    \geq (d-3\eps) \max \bigl\{ p|V_{ij}|, |N_{\Gamma}(v; V_{ij})|/2 \bigr\}\\[2pt]
    &\geq (d-\eps_{\BL}) \max \bigl\{ p|V_{ij}'|, |N_{\Gamma}(v; V'_{ij})|/2 \bigr\}\,,
\end{align*}
as required for the minimum degree condition for $(\eps_{\BL}, d, p)$-super-regularity of $\mathcal{V'}$ on $R'$. This completes the verification of~\ref{itm:SBL2} for $G'$. 

Finally, to verify~\ref{itm:SBL3}, we need to show that $\mathcal{I}$ and $\mathcal{J}$ form a $(\rho, \zeta/4, \Delta, \Delta_J = \Delta)$-restriction pair (see Definition~\ref{defn:image_restrict}). Indeed,~\ref{itm:IR1} follows from the bound $|X_{ij}^*| \leq |N_{H'}[X_I]| \leq \rho |X'_{ij}|$, obtained using equation~\eqref{eq:NXI} and the size compatibility of~$\mathcal{X}'$ and~$\mathcal{V}'$. Both~\ref{itm:IR3} and~\ref{itm:IR4} are consequences of $\Delta(H) \leq \Delta$ and the definition of $J_x$. More specifically, $|J_x| + \deg_{H'}(x) = \deg_H(x) \leq \Delta$ and every vertex of $N_H(X_0)$ is adjacent to at most $\Delta$ neighbours in $H'$. 

It remains to establish~\ref{itm:IR2},~\ref{itm:IR5}, and~\ref{itm:IR6}.
For this, fix $x \in X_{ij}^*$. Recall that for~$x \in X^*_{ij}$, there exist~$t \leq t^*$ and~$x_t \in X_0$ such that~$J_{x} \subseteq \varphi_{t^*}(N_H(x_t))$, and so $J_x$ satisfies~\ref{itm:W1}--\ref{itm:W4} of Lemma~\ref{lem:resil_common_nbhd} for the sets~$A_j^t \subseteq V_{i_t\,j}$ defined after the proof of Claim~\ref{clm:CNL34}. Note that while $A^t_j$ need not be a subset of $V_{i_t\,j}'$, we have that~$A^t_j \symmdiff V'_{i_t\,j} \subseteq S \cap V_{i_t\,j}$, which will be useful to verify~\ref{itm:IR2},~\ref{itm:IR5}, and~\ref{itm:IR6}. Moreover, by equation~\eqref{eq:Ajt}, we have that
\begin{equation}
    |A^t_j| \geq (1-\eps^2)|V_{i_t\,j}| \geq (1/2)|V_{i_t\,j}|.\label{eq:Atj_Vtj}
\end{equation}

For~\ref{itm:IR2}, since $A^t_j \setminus S\subseteq V'_{i_t\,j}$, we
obtain a lower bound on the size of~$I_x = N^*_G(J_x; V'_{i_t\,j})$
using the relation $|I_x| \geq |N^*_G(J_x; A^t_j)| -
|N^*_{\Gamma}(J_x; S)|$. Using~\ref{itm:W1}, we get
\[
  |N^*_G(J_x; A^t_j)| \geq \zeta\, p^{|J_x|} |A^t_j| \geq
  (\zeta/2)\,p^{|J_x|}|V_{i_t\,j}|.
\]
On the other hand, to bound~$|N^*_{\Gamma}(J_x; S)|$, observe that by
\ref{itm:W2}, we have $|N^*_{\Gamma}(J_x)| \leq (1 +
\eps^*)\,p^{|J_x|}n$. Hence, by equation~\eqref{eq:S_prop} applied to the
at most~$\Delta$ many vertices of~$J_x$, we get that, for all $i' \in
[m]$ and $j' \in [K]$, 
\begin{align}\begin{split}
\bigl|N^*_{\Gamma}(J_x; S)\bigr| &\leq \mu(1+\eps)(1+\eps^*)\,p^{|J_x|}n + \eps\mu C \log n \\
&\leq 4 \mu p^{|J_x|}n +  \eps \mu  p^{|J_x|}n \leq 10\,\mu p^{|J_x|} n \leq \eps p^{|J_x|}|V_{i'j'}|, \label{eq:Jx_S}
\end{split}\end{align}
where the second inequality follows from equation~\eqref{eq:p_ineq},
as $|J_x| \leq \Delta$, and $\eps \leq \eps^* \leq 1$; the
last inequality uses~\ref{itm:RG1}.  Now, \ref{itm:IR2}~follows by
combining both the above inequalities along with the relation~$\eps
\leq \zeta/4$; indeed, 
\[|I_x| \geq |N^*_G(J_x; A^t_j)| - |N^*_{\Gamma}(J_x; S)| \geq (\zeta/2 - \eps)\, p^{|J_x|} |V_{i_t\,j}| \geq (\zeta/4)\,(dp)^{|J_x|}|V'_{i_t\,j}|\,.\]

Property~\ref{itm:IR5} is trivially true if $J_x = \emptyset$, and so we may assume that
$J_x \neq \emptyset$. By~\ref{itm:W3}, we have that $|N^*_{\Gamma}(J_x; A_j^t)| = (1 \pm \eps^*)\,p^{|J_x|}|A_j^t|$. Combining this with equations~\eqref{eq:Atj_Vtj} and~\eqref{eq:Jx_S}, and using $A^t_j\setminus S \subseteq V'_{i_t\,j}$, we obtain the following lower bound (recall that $\eps \leq \eps^* \leq 0.1\, \eps_{\BL}$):
\begin{align*}
    \bigl|N^*_{\Gamma}(J_x; V'_{i_t\,j})\bigr| 
    & \geq |N^*_{\Gamma}(J_x; A^t_j)| - |N^*_{\Gamma}(J_x; S)| \geq (1-\eps^*)\,p^{|J_x|}|A_j^t| - \eps p^{|J_x|}|V_{i_t\,j}| \\
    &\geq (1-\eps^*)(1- \eps^2)\,p^{|J_x|}|V_{i_t\,j}| - \eps p^{|J_x|}|V_{i_t\,j}| \geq (1- 3\eps^*)\,p^{|J_x|}|V_{i_t\,j}|\\
     &\geq (1-\eps_{\BL})\,p^{|J_x|}|V_{i_t\,j}| \geq
            \bigl[(1-\eps_{\BL})p\,\bigr]^{|J_x|}|V'_{i_t\,j}|\,.
\end{align*}
Similarly, as $V'_{i_t\,j} \subseteq A_j^t \cup S$, we obtain 
\begin{align*}
     \bigl|N^*_{\Gamma}(J_x; V'_{i_t\,j})\bigr| & \leq |N^*_{\Gamma}(J_x; A^t_j)| + |N^*_{\Gamma}(J_x; S)| \leq (1+\eps^*)\,p^{|J_x|}|V_{i_t\,j}| + \eps p^{|J_x|}|V_{i_t\,j}|\\
     &\overset{\eqref{eq:Vprime_lb}}{\leq} (1+2\eps^*)p^{|J_x|}\cdot(1 - \eps^2)^{-1}|V'_{i_t\,j}| \leq (1+2\eps^*)(1 
     + 2 \eps)\, p^{|J_x|}|V'_{i_t\,j}|\\
     &\leq (1+\eps_{\BL})\,p^{|J_x|}|V'_{i_t\,j}| \leq \bigl[(1+\eps_{\BL})p\,\bigr]^{|J_x|}|V'_{i_t\,j}|\,,
\end{align*}
thereby establishing~\ref{itm:IR5}.

Finally, for~\ref{itm:IR6}, suppose~$x \in X_{i_t\,j}^*$ is given along with a vertex~$y \in V(H')$ such that~$xy \in E(H')$. Then, we have~$y \in N_{H'}[X_I]$. Moreover, since by construction, the set $N_{H'}[X_I]$ was partitioned into parts $N_{ij} \subseteq X'_{ij}$ to form an $R$-partition, it follows that $y \in X'_{i_t\,j'}$ for some $j' \in [K]$ and $j' \neq j$. Further, as there is no edge in $H$ between the sets $X_I^t$ and $X_I^{t'}$, it follows that either $y \in X_I^t$ or $y \notin X_I$. In either case, we have that~$J_y \subseteq \varphi_{t^*}(N_H(x_t))$.

As both $J_x$ and $J_y$ are subsets of $\varphi_{t^*}(N_H(x_t))$, property~\ref{itm:W4} is applicable. As~$xy$ is an edge in $H'$ and $\Delta(H) \leq \Delta$, we have $|J_x|, |J_y| < \Delta$. Moreover, if $\Delta = 2$, then $J_x \cap J_y = \emptyset$; indeed for otherwise, given any $z \in J_x \cap J_y$, the vertex~$\varphi_{t^*}^{-1}(z)$ would be adjacent to at least three vertices in $H$, namely $x$, $y$, and $x_t$. Thus, by~\ref{itm:W4}, it follows that $\bigl(N^*_{\Gamma}(J_x; A^t_j), N^*_{\Gamma}(J_y; A^t_{j'}) \bigr)$ is $(\eps^*, d, p)$-regular in~$G$. Note that~\ref{itm:W4} allows $J_y$ to be the empty set, in which case, $N^*_{\Gamma}(J_y; A^t_{j'} ) = A^t_{j'}$. From~\ref{itm:W3} we know that~$|N^*_{\Gamma}(J_x; A^t_j)| \geq (1-\eps^*)p^{|J_x|} |A^t_j|$\, and so by~$A^t_j \symmdiff V'_{i_t\,j} \subseteq S \cap V_{i_t\,j}$, we obtain
\begin{align*}
  \bigl|N^*_{\Gamma}(J_x; A^t_j) &\symmdiff N^*_{\Gamma}(J_x;V'_{i_t\,j})\bigr|
  =|N^*_{\Gamma}(J_x; A^t_j\symmdiff V'_{i_t\,j})| \leq
    |N^*_{\Gamma}(J_x; S)| \overset{\eqref{eq:Jx_S}}{\leq} 10\mu p^{|J_x|} n\\
    &\overset{ \text{\ref{itm:RG1} }}{\leq} 40\, \eps^2p^{|J_x|}|V_{i_t\,j}| 
    \overset{\eqref{eq:Atj_Vtj}}{\leq} 80\, \eps^2p^{|J_x|}|A^t_j| \overset{\text{\ref{itm:W3}}}{\leq} \frac{80\eps^2}{1
    - \eps^*}|N^*_{\Gamma}(J_x; A^t_j)| \leq \eps |N^*_{\Gamma}(J_x;
    A^t_j)| \,.
\end{align*}
The above equation holds analogously for $y$ when $J_y \neq \emptyset$. On the other hand, when $J_y = \emptyset$, then using~\ref{itm:RG1} and equation~\eqref{eq:Atj_Vtj}, we obtain
\begin{align*}
     \bigl|N^*_{\Gamma}(J_y; A^t_{j'}) &\symmdiff N^*_{\Gamma}(J_y;V'_{i_t\,j'})\bigr| = |A^t_{j'} \symmdiff V'_{i_t\,j'}| \leq |S \cap V_{i_t\, j'}|  \leq |S| \\
     & = \mu n = \eps^2 n/r \overset{\text{\ref{itm:RG1}}}{\leq} 4 \eps^2 |V_{i_t\, j'}| \overset{\eqref{eq:Atj_Vtj}}{\leq} 8 \eps^2|A^t_{j'}| \leq \eps \bigl|N^*_{\Gamma}(J_y; A^t_{j'})\bigr|  
\end{align*}
Now, since $\sqrt{\eps}, \eps^* \leq 0.1\, \eps_{\BL}$, it follows from Lemma~\ref{lem:sparse_reg_robust} that~$\bigl(N^*_{\Gamma}(J_x; V'_{i_t\,j}), N^*_{\Gamma}(J_y; V'_{i_t\,j'}) \bigr)$ is $(\eps_{\BL}, d, p)$-regular in $G'$, as required for~\ref{itm:IR6}. This concludes the verification of~\ref{itm:SBL3}. 

Now, since~\ref{itm:SBL1}--\ref{itm:SBL3} hold, the good event of~Lemma~\ref{lem:rg_blowup} returns an embedding~$\varphi'$ of~$H'$ into~$G'$ that agrees with the image restrictions~$\mathcal{I} = \{I_x\}$ generated by the pre-embedding
process.  Hence, the map~$\varphi \coloneqq \varphi_{t^*} \sqcup \varphi'$ gives an embedding of
the graph~$H$ into~$G$, as required.
\end{proof}

\section{Concluding Remarks}
\label{sec:concluding-remark}

In this paper we proved that the local resilience
of~$G(n,p)$ for the property of containing any spanning subgraph~$H$ having
maximum degree at most~$\Delta$ and with~$\Omega\,(p^{-2})$ vertices not contained in triangles is at
least~$1 - \exth$ for $p\geq C\,(\log n/n)^{1/\Delta}$. 

In~\cite{AllBoeKohNev:robust}, we prove that $\exth \leq 1 - 1/2\Delta$, making the local resilience at least $1/2\Delta$. On the other hand, by the construction witnessing the tightness of the  Bollob\'as--Eldridge--Catlin Conjecture, the local resilience could be as large
as~$1/(\Delta+1)$. Any improvement in the minimum degree condition of Theorem~\ref{thm:Sauer_Spencer} that additionally satisfies the extendability property in the definition of~$\exth$ (Definition~\ref{defn:extension_threshold}) will automatically improve the local resilience bound of Theorem~\ref{thm:resil_main_basic}.

We do
not believe our hypothesis on~$p$ in Theorem~\ref{thm:resil_main} to be optimal. As mentioned earlier, our hypothesis
matches the condition on~$p$ in~\cite{DKRR, Univ_Delta2} that ensures that~$G(n,p)$ is universal for spanning graphs having maximum degree at most~$\Delta$. This seems to be a natural benchmark for such embedding results. Indeed, for this range of~$p$, every $\Delta$-set of vertices in~$G(n,p)$ has a common neighbour with high probability, making this property a crucial component for several embedding algorithms in this regime.
We do not expect it to be easy to weaken our hypothesis on~$p$
substantially. Even in the well-studied setting of universality, this benchmark was only recently surpassed for~$\Delta \geq 3$ by Ferber and Nenadov~\cite{ferb_nena_univ}, who require the weaker bound~$p \geq (n^{-1} \log^3n)^{1/(\Delta - 1/2)}$. Any improvement to the hypothesis on~p in Theorem~\ref{thm:resil_main} would be interesting; in particular, whether it can be sharpened for $\Delta \geq 3$ to match the bound in~\cite{ferb_nena_univ}.

Theorem~\ref{thm:resil_main} restricts the class of bounded degree graphs~$H$ by additionally requiring the existence of vertices not contained in
triangles. As shown by Huang, Lee, and Sudakov~\cite{HLS_vtsintriangles}, this requirement in Theorem~\ref{thm:resil_main} is both necessary and optimal. However, this leads to the question whether one can further restrict the graph~$G$ naturally to obtain a local resilience theorem for the containment of all bounded degree graphs~$H$. A possible restriction on~$G$, as used in \cite{AllBoeEhrSchTar}, is to require that, in addition to the conditions of Theorem~\ref{thm:resil_main}, the graph~$G$ retains a positive proportion of the copies of $K_{\Delta+1}$ in~$\Gamma$ at each vertex~$v\in V(G)$. It would be interesting to see if this restriction generalises Theorem~\ref{thm:resil_main} to the class of all bounded degree spanning graphs, and if there are other ways to naturally restrict the graph $G$ to achieve the same.

Finally, we note that it would also be of interest
to transfer the Sauer--Spencer theorem to the \textit{pseudorandom}
setting of $(n,d,\lambda)$-graphs or `bijumbled graphs' having no
vertices of small degree.

\bibliographystyle{ams_edited}
\bibliography{sauerspencer-mrefed}

\appendix

\clearpage
\section{Remarks on the Statement of Lemma~\ref{lem:resil_G}}
\label{app:adaptLemG}

Lemma 26 from~\cite{sparse_bandwidth}, for the setting $r_0 = 1$ and with the omission of outcomes referring to backbone graphs $(B^K_r)$, can be stated as follows.

\begin{lemma}[Lemma 26~\cite{sparse_bandwidth}] 
  \label{lem:actual_resil_G}
  For each $\gamma > 0$ and integer $K \geq 2$, there exists $d_0 > 0$
  such that for every $\eps \in (0,1/2K)$, there
  exist $r_1\geq 1$ and $C_{\LG}>0$ such that the
  following holds a.a.s. for~$\Gamma \sim G(n,p)$ if
  $p \geq C_{\LG} (\log n/n)^{1/2}$.

  Let~$G=(V,E)$ be a spanning subgraph of $\Gamma$ with
  $\delta(G) \geq (1 - 1/K + \gamma)pn$. Then there exist
  an integer $m$ with $r = mK \leq r_1$; a subset $V_0 \subseteq V$
  with $|V_0| \leq C_{\LG}p^{-2}$; a $K$-equitable vertex partition~$\mathcal{V} = \{V_{ij}\}_{i\in[m],j\in[K]}$ of $V(G)\setminus V_0$;
  and an $r$-vertex graph $R$ on the vertex set $[m] \times [K]$ 
  with~$\delta(R) \geq (1 -1/K + \gamma/2)r$ and
  containing a $K$-clique factor $R'$ such that the following hold:
  \begin{enumerate}[label=\itmarab{RGO}]
  \item \label{itm:RGO1} $n/4r\leq
    |V_{ij}| \leq4n/r$ for every $i\in[m]$ and $j\in[K]$,
    
  \item \label{itm:RGO2} $(G,\mathcal{V})$ is
    $(\eps,d_0,p)$-lower-regular on $R$ and $(\eps,d_0,p)$-super-regular
    on $R'$,
    
  \item \label{itm:RGO3} the pairs $\bigl(N_\Gamma(v; V_{ij}),V_{i'j'}\bigr)$ and
    $\bigl(N_\Gamma(v; V_{ij}),N_\Gamma(v; V_{i'j'})\bigr)$ are
    $(\eps,d_0,p)$-lower-regular pairs in~$G$ for every
    $\{(ij),(i'j')\} \in E(R)$ and $v\in V\setminus V_0$, and
    
  \item \label{itm:RGO4} $|N_\Gamma(v;V_{ij})| = (1 \pm
    \eps)p|V_{ij}|$ for every $i \in [m]$, $j\in [K]$, and $v
    \in V \setminus V_0$.
    
  \end{enumerate}
\end{lemma}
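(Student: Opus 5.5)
The statement above is \cite[Lemma 26]{sparse_bandwidth}, specialised to $r_0=1$ and with the backbone conclusions dropped. I would prove it by the standard sparse regularity route, in four stages. First, build a regular partition and a reduced graph with a clique factor. Second, clean the clusters so that the pairs in $R'$ become super-regular and the inheritance properties hold. Third, redistribute the leftover linear-size junk. Only a set of size $O(p^{-2})$ stays in $V_0$.

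\textbf{Stage 1: reduced graph and clique factor.} Apply the sparse regularity lemma (Kohayakawa--R\"odl, in its minimum-degree form) to $G\subseteq\Gamma$ with a parameter $\eps'\ll\eps$. Choose $d_0\ll\gamma$, depending only on $\gamma$ and $K$. This gives an equitable partition of $V\setminus V_0'$ with $|V_0'|\le\eps' n$. Let $R$ consist of the pairs that are $(\eps',d_0,p)$-lower-regular. Since $\Gamma$ has all degrees about $pn$ and few pairs are irregular or sparse, the usual counting argument transfers $\delta(G)\ge(1-1/K+\gamma)pn$ to $\delta(R)\ge(1-1/K+\gamma/2)r$. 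After discarding at most $K-1$ clusters we have $K\mid r$. Hajnal--Szemer\'edi (Theorem~\ref{thm:Hajnal-Szemeredi}), applied to the complement of $R$, then gives a $K$-clique factor $R'$, whose cliques we index as $\{i\}\times[K]$.

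\textbf{Stage 2: regularity inheritance and degrees.} For the inheritance properties~\ref{itm:RGO3} I would invoke the sparse one- and two-sided inheritance lemmas from~\cite{sparse_blowup}. They say that, a.a.s.\ in $\Gamma$ for $p\ge C(\log n/n)^{1/2}$, for every lower-regular pair $(X,Y)$ of linear size all but $O(p^{-2})$ vertices $v$ satisfy the following: $(N_\Gamma(v;X),Y)$ and $(N_\Gamma(v;X),N_\Gamma(v;Y))$ are lower-regular with a slightly worse $\eps$. Taking a union over the boundedly many pairs of $R$, the failing vertices number $O(p^{-2})$; these go into $V_0$. Property~\ref{itm:RGO4} is handled with Lemma~\ref{lem:rand_G_property}: for each cluster only $O(p^{-1}\log n)=O(p^{-2})$ vertices have atypical $\Gamma$-degree into it, and they are added to $V_0$ too. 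Here I use $np^2\gtrsim\log n$.

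\textbf{Stage 3: super-regularity and redistribution.} In each clique of $R'$, remove from each cluster the vertices whose degree into another cluster of the same clique is below $(d_0-\eps)\max\{p|V_{i'j'}|,\tfrac12\deg_\Gamma\}$. By lower-regularity there are at most $\eps'|V_{ij}|$ such vertices per pair, and this is the point where one needs the variant of regularity adapted to the $\tfrac12\deg_\Gamma$ term. The removed vertices, together with the $\eps'n$ vertices of $V_0'$, form a junk set $U$ of size $O(\eps' n)$. Each $u\in U$ outside the $O(p^{-2})$ bad set has $\deg_G(u)\ge(1-1/K+\gamma)pn$ and typical $\Gamma$-degrees into all clusters. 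So, by the same counting as in Claim~\ref{clm:CNL34}, some clique $\{i\}\times[K]$ of $R'$ has $u$ with $p$-density at least $2d_0$ into every one of its clusters. Assign $u$ to a cluster of that clique, spreading assignments via a matching/flow argument so that each cluster receives at most $\sqrt{\eps'}|V_{ij}|$ new vertices.

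\textbf{Stage 4: rebalancing.} Finally, restore $K$-equitability by moving a few vertices between clusters of the same clique, which is possible since super-regular degrees hold in both directions. Lemma~\ref{lem:sparse_reg_robust} shows that all lower-regularity and inheritance properties survive these $O(\sqrt{\eps'})$-proportion changes, provided $\eps'$ is chosen small relative to $\eps$. The size bounds~\ref{itm:RGO1} follow from equitability and the smallness of the changes.

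\textbf{Main obstacle.} I expect the hardest part to be Stage~3. Newly inserted vertices must satisfy super-regular degree conditions into every partner cluster of their clique, but also inheritance into all clusters adjacent in $R$. Meanwhile the rebalancing must not invalidate either property, and no linear-size set may be left in $V_0$. The approach I would try first is this: compute the inheritance exceptions only after the final partition is fixed, bounding them with the $O(p^{-2})$ inheritance lemmas applied to the final clusters. For this to work, those lemmas must be applied to all large vertex sets simultaneously, rather than to pairs fixed in advance.
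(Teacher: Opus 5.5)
The paper does not reprove this statement. It is quoted from \cite{sparse_bandwidth}, and the appendix only records how the original proof runs: it starts from an equitable partition $\{U_{ij}\}$ of $V\setminus U_0$ with $|U_0|\le\eps^*n$, obtained from a minimum-degree sparse regularity lemma, and then redistributes vertices so that $|U_{ij}\symmdiff V_{ij}|\le 2000K^2\eps^*\gamma^{-1}|U_{ij}|$. Your outline follows this route, but two steps fail as written.

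\textbf{Stage~4 (rebalancing).} You cannot restore $K$-equitability by moving vertices between clusters of one clique. If $v\in V_{ij}$ is moved to $V_{ij'}$, super-regularity on $R'$ requires $v$ to have about $d_0p|V_{ij}|$ neighbours in its \emph{own former} cluster $V_{ij}$. Super-regularity of the pairs $(V_{ij},V_{ij''})$ says nothing about degrees inside $V_{ij}$. Indeed every $G[V_{ij}]$ can be empty, e.g.\ if $G$ is multipartite with many parts and the clusters refine the parts. Equitability has to be built into the redistribution instead:
\begin{itemize}
\item trim the clusters of each clique to a common size, adding the trimmed vertices to the junk (still an $O(\eps')$-fraction);
\item insert each junk vertex into the currently smallest cluster of a clique into all $K$ of whose clusters it has $p$-density at least $2d_0$. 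Every cluster of such a clique is admissible, so this keeps the clique $K$-equitable.
\end{itemize}
For the same reason, your resolution of the ``main obstacle'' needs rearranging. Deleting into $V_0$ the inheritance exceptions of an already balanced final partition unbalances it again. Instead, fix $V_0$ first, relative to the initial partition. Close it off, via Lemma~\ref{lem:rand_G_property}, under having atypically many $\Gamma$-neighbours in $V_0$ or in the set of vertices that may later be moved. Then transfer inheritance and super-regular degrees to the final clusters with Lemma~\ref{lem:sparse_reg_robust}, just as \ref{itm:RG3} is passed from $\mathcal{V}$ to $\mathcal{V}'$ in the main proof. This also needs some slack in your removal threshold.

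\textbf{Stage~3 (spreading the junk).} Knowing that each junk vertex has \emph{some} good clique does not support your matching/flow step. All junk vertices could share a single good clique, whose clusters would then grow by a factor of order $\eps' r$ and lose regularity. You need the quantitative version of the count in Claim~\ref{clm:CNL34}. A typical vertex $u$ with only $a$ good cliques has $\deg_G(u)\le(1-1/K+2d_0/K)pn+a\,pn/r+O(\eps')pn$, so $a\ge\gamma r/2$ once $d_0\ll\gamma$. Hall's condition then gives an assignment in which each cluster receives only $O(\eps'\gamma^{-1})|V_{ij}|$ new vertices. This is exactly where the factor $\gamma^{-1}$ in the original bound comes from.

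\textbf{Two smaller corrections.}
\begin{itemize}
\item The claim $O(p^{-1}\log n)=O(p^{-2})$ is false for $p\gg1/\log n$; for constant $p$ you need $|V_0|=O(1)$. It also does not follow from $np^2\gtrsim\log n$. Use the $\log(en/|X|)$ form of Lemma~\ref{lem:rand_G_property}, which gives only $O(p^{-1}\log r)$ exceptions per cluster.
\item The $\tfrac12\deg_\Gamma$ term in super-regularity needs no special version of regularity. For every vertex with typical $\Gamma$-degrees (cf.~\ref{itm:RGO4}) it is below $p|V_{ij'}|$, so plain lower-regularity already bounds the number of low-degree vertices.
\end{itemize}
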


This lemma can be morphed to Lemma~\ref{lem:resil_G} as follows. First, note that if \ref{itm:RGO1}--\ref{itm:RGO4} are true for $d_0 > 0$, then they also hold for any $d \leq d_0$ by the definitions of lower-regularity and super-lower-regularity. Thus, the outcome \ref{itm:RG1}--\ref{itm:RG4} can be required to hold for all $d \leq d_0$. 

Secondly, while the condition $|V_{ij}| \geq (1- 2\eps)n/r$, as mentioned in~\ref{itm:RG1}, is not part
  of~\ref{itm:RGO1}, it can be deduced from the
  proof of \mcite[Lemma 26]{sparse_bandwidth} as follows. Given $\eps,\, \gamma > 0$ and
  $K \geq 2$, as in the statement of the lemma, an auxiliary constant
  $\eps^*$ is chosen in the proof such that
  $\eps^* \leq 10^{-10}\eps^2\gamma K^{-2}$. The proof then begins
  with an equitable partition $\mathcal{U} = \{U_{ij}\}$ of
  $V(G)\setminus U_0$, where $U_0$ is an exceptional set of vertices
  of size at most $\eps^*n$. The required partition
  $\mathcal{V} = \{V_{ij}\}$ is then obtained from $\mathcal{U}$ by a
  redistribution of some vertices of $G$. As claimed in equation (4.3)
  of that proof, and by the choice of~$\eps^*$, we 
  have~$|U_{ij}\symmdiff V_{ij}| \leq 2000 K^2 \eps^*
  \gamma^{-1}|U_{ij}| \leq \eps|U_{ij}|$. Thus, we obtain \[|V_{ij}| \geq (1- \eps)|U_{ij}| \geq (1-\eps)(1-\eps^*)n/r \geq (1-
  2\eps)n/r,\] where the second inequality follows from the size of
  $U_0$ and equitability of the partition~$\mathcal{U}$.

  Finally, the minimum degree condition $\delta(G) \geq (1 - 1/K + \gamma)pn$ on $G$ can be generalised to $\delta(G) \geq (\xi + \gamma)pn$ for any $\xi \geq 1 - 1/K$ by replacing the input variable~$(K-1)/K + \gamma$ with~$\xi + \gamma$ in the application of \mcite[Lemma 13]{sparse_bandwidth} within the proof of \mcite[Lemma 26]{sparse_bandwidth}. This returns a reduced graph~$R$ with $\delta(R) \geq (\xi + \gamma/2)r \geq (1 - 1/K + \gamma/2)r$, as required, and the remainder of the proof of Lemma~$26$ proceeds unaffected.

\end{document}